\renewcommand\l@subsection{\@tocline{2}{0pt}{2pc}{5pc}{}}
\newcommand{\R}{{\mathbb R}}
\newcommand{\Z}{{\mathbb Z}}
\newcommand{\Q}{{\mathbb Q}}
\newcommand{\Map}{\operatorname{Map}}
\newcommand{\Hom}{\operatorname{Hom}}
\newcommand{\Emb}{\operatorname{Emb}}
\newcommand{\Link}{\operatorname{Link}}
\newcommand{\Ho}{\operatorname{H}}
\newcommand{\Ch}{\operatorname{C}}
\newcommand{\Ba}{\operatorname{B}}
\newcommand{\ChdR}{\operatorname{C}_{dR}}
\newcommand{\Conf}{\operatorname{Conf}}
\newcommand{\Chen}{\operatorname{Ch}}
\newcommand{\ZZ}{\operatorname{Z}}
\newcommand{\calA}{{\mathcal{A}}}
\newcommand{\D}{{\mathcal{D}}}
\newcommand{\Lk}{{\mathcal{L}}}
\newcommand{\LD}{{\mathcal{LD}}}
\newcommand{\LDf}{{\mathcal{LD}_{\mathrm{f}}}}
\newcommand{\HD}{{\mathcal{HD}}}
\newcommand{\BCD}{{\mathcal{BCD}}}
\newcommand{\LV}{{\mathcal{LV}}}
\newcommand{\Xdot}{{X^{\bullet}}}
\newcommand{\ev}{\operatorname{ev}}
\newcommand{\pr}{\operatorname{pr}}
\newcommand{\x}{\times}
\newcommand{\Tot}{\operatorname{Tot}}
\newtheorem*{rep@theorem}{\rep@title}
\newcommand{\newreptheorem}[2]{%
	\newenvironment{rep#1}[1]{%
		\def\rep@title{#2 \ref{##1}}%
		\begin{rep@theorem}}%
		{\end{rep@theorem}}}
\theoremstyle{plain}
\newtheorem{thm}{Theorem}[section]
\newtheorem{prop}[thm]{Proposition}
\newtheorem{cor}[thm]{Corollary}
\theoremstyle{definition}
\newtheorem{defin}[thm]{Definition}
\newtheorem{example}[thm]{Example}
\newtheorem{def/ex}[thm]{Definition/Example}
\theoremstyle{remark}
\newtheorem{rem}[thm]{Remark}
\newtheorem{rems}[thm]{Remarks}
\newtheorem{claim}[thm]{Claim}
\newcommand{\refS}[1]{Section~\ref{S:#1}}
\newcommand{\refT}[1]{Theorem~\ref{T:#1}}
\newcommand{\refC}[1]{Corollary~\ref{C:#1}}
\newcommand{\refP}[1]{Proposition~\ref{P:#1}}
\newcommand{\refD}[1]{Definition~\ref{D:#1}}
\def\phi{\varphi}
\begin{document}


\title[Braids and integrals]{Diagram complexes, formality, and configuration space integrals for spaces of braids}


\author{Rafal Komendarczyk}
\address{Department of Mathematics, Tulane University, 6823 St. Charles Ave, New Orleans, LA 70118}
\email{rako@tulane.edu}
\urladdr{dauns01.math.tulane.edu/\textasciitilde rako}

\author{Robin Koytcheff}
\address{Department of Mathematics, University of Louisiana at Lafayette, P.O.~Box 43568, Lafayette, LA 70504}
\email{koytcheff@louisiana.edu}
\urladdr{ucs.louisiana.edu/\textasciitilde c00401634}

\author{Ismar Voli\'c}
\address{Department of Mathematics, Wellesley College, 106 Central Street, Wellesley, MA 02481}
\email{ivolic@wellesley.edu}
\urladdr{ivolic.wellesley.edu}

\subjclass[2010]{Primary: 55P35, 55R80, 57Q45, 81Q30; Secondary: 57R40, 57M27}
\keywords{Braids, loop spaces, spaces of links, cobar construction, configuration space integrals, iterated integrals, formality, graph complexes, trivalent graphs, chord diagrams}


\begin{abstract}
We use rational formality of configuration spaces and the bar construction to study the cohomology of the space of braids in dimension four or greater.  We provide a diagram complex for braids and a quasi-isomorphism to the de Rham cochains on the space of braids.  The quasi-isomorphism is given by a configuration space integral followed by Chen's iterated integrals.
This extends results of Kohno and of Cohen and Gitler on the cohomology of the space of braids to a CDGA suitable for integration.
We show that this integration is compatible with Bott-Taubes configuration space integrals for long links via a map between two diagram complexes.
As a corollary, we get a surjection in cohomology from the space of long links to the space of braids.  
We also discuss to what extent our results apply to the case of classical braids.
\end{abstract}


\maketitle

\tableofcontents

\baselineskip=13pt
\parskip=4pt
\parindent=0cm



\section{Introduction}\label{S:Intro}


We study the space of $m$-strand braids in $\R^{n+1}$,
or more precisely, the space $\Omega \Conf(m, \R^n)$ of based loops in the space of ordered configurations in $\R^n$.  
We study its cohomology via complexes of diagrams and integration maps from these complexes.  
One ingredient is the bar complex on cochains of a space, which is well-known to compute the cohomology of the based loop space, and (pure) braids are precisely loops in the configuration space.
On the other hand, the cochain algebra of configuration spaces is quasi-isomorphic to a complex of diagrams $\D(m)$, which was used by Kontsevich \cite{K:OMDQ} to establish the rational formality of configuration spaces.  (That is, there is a CDGA map from $\D(m)$ to the cochain algebra inducing an isomorphism on rational cohomology.)  
We combine these ideas to show that a bar complex $\Ba(\D(m))$ of \emph{braid diagrams} produces all the cohomology of the space of braids, via integration.
We then relate the integrals from this bar complex to the integrals from a complex $\LD(m)$ of \emph{long link diagrams}, which produce cohomology classes in the space $\Lk_m^{n+1}$ of long $m$-component links in $\R^{n+1}$, as previously studied by the last two authors.  The commutative diagram in the theorem below summarizes our main results. In the rest of this introduction we elaborate on its individual components.  We will also mention various consequences that can be deduced from the maps in the square.  
We mainly focus on the case where $n \geq 3$, that is, braids in dimension four or greater.  
However, these methods can be applied to classical braids as well, and we gather those results separately in \refS{R^3BraidsCohomology}.

\begin{repthm}{T:MainDiagramThm}
	\label{T:IntegralsAgree}
	Let $n \geq 3$.  Then there is a commutative square of maps of cohomology with real coefficients:
	\begin{equation}
	\label{E:MainCommutativeSquare}
	\xymatrix{
		\Ho^{0,*}(\LD_\mathrm{f}(m)) \ar^-{\varint_{\mathrm{Bott-Taubes}}}[rr] \ar@{->>}_-\phi[d] & & \Ho^*(\Lk_m^{n+1}) \ar^-{\iota^*}[d] \\
		\Ho^*(\Ba(\D(m))) \ar^-{\varint_{\mathrm{Chen}} \circ \varint_{\mathrm{formality}}}_-{\cong}[rr] & & \Ho^*(\Omega \Conf(m, \R^n))
	}
	\end{equation}
	The maps are induced by CDGA maps, provided de Rham cochains are used in the spaces on the right.
\end{repthm}

In \refS{IntroChenFormality}, we will discuss the bottom horizontal map; In \refS{BraidsAsLinks}, we will describe the right vertical map; 
In \refS{IntroBT}, we will discuss the top horizontal map and the left vertical map.

\subsection{Configuration space formality integrals, the bar construction, and Chen integrals}
\label{S:IntroChenFormality}
Let $\Conf(m,\R^n)$ be the based configuration space of $m \geq 1$ points in $\R^n$, $n \geq 3$, namely
$$
\Conf(m,\R^n)=\{(x_1,...,x_m)\in (\R^n)^m \colon x_i\neq x_j \text{ for } i\neq j\}.
$$  
Kontsevich \cite{K:OMDQ} defined quasi-isomorphisms of commutative differential graded algebras (CDGAs)
\begin{equation}\label{E:ConfFormalityQuasiIsosIntro}
\ChdR^\ast(\Conf(m,\R^{n}))\stackrel{I}{\longleftarrow} \D(m) \stackrel{\overline{I}}{\longrightarrow} \Ho^*(\Conf(m,\R^{n})).
\end{equation}
Here $\ChdR^\ast(\Conf(m,\R^{n}))$ is the de Rham cochain algebra and $\D(m)$ is a certain complex of diagrams (see \refS{Diagrams}).  The map $\overline{I}$ is easy to define, so the crux of the construction is the map $I$ (denoted by $\varint_{\mathrm{formality}}$ in the diagram \eqref{E:MainCommutativeSquare}), which is given by integration of differential forms over configuration spaces (see \refS{Integral}).  The proof that this is a quasi-isomorphism is subtle and ingenious.
 \footnote{In fact, Kontsevich proved more, namely that the little balls operad is formal; an elaboration of this proof can be found in \cite{LV}.}  

Now one can take cochains and cohomology CDGAs $\ChdR^{\ast}(\Conf(m,\R^n))$ and $\Ho^{*}(\Conf(m,\R^n))$ and perform the standard algebraic bar construction to obtain bar complexes, which are CDGAs (and moreover Hopf algebras), that we will denote by $\Ba(\ChdR^{\ast}(\Conf(m,\R^n)))$ and $\Ba(\Ho^{*}(\Conf(m,\R^n)))$.  (The bar construction is reviewed in \refS{BarConstruction}.)  Similarly, the  bar construction on $\D(m)$ gives another CDGA, $\Ba(\D(m))$.  
We will observe that the maps $I$ and $\overline I$ pass through the bar construction and will consequently, for $n\geq 3$ and $m\geq 1$, get CDGA quasi-isomorphisms, over $\R$ (\refT{BraidsFormality}):
\begin{equation}\label{E:MainZigzag}
\Ba(\ChdR^{\ast}(\Conf(m,\R^n))) \stackrel{I}{\longleftarrow}\Ba(\D(m)) \stackrel{\overline I}{\longrightarrow}\Ba(\Ho^{*}(\Conf(m,\R^n)))
\end{equation}

The map $I$ is precisely $\int_{\mathrm{formality}}$ from \eqref{E:MainCommutativeSquare}.
Now, for $n\geq 3$, the left CDGA  in \eqref{E:MainZigzag} is quasi-isomorphic to cochains on the based loop space $\Omega\Conf(m,\R^n)$; see \refT{CommuteTotAndCochains}.  (This fact can be viewed as a consequence of a theorem on cosimplicial models for mapping spaces, such as the geometric cobar construction for $\Omega X$; see \refS{Cosimplicial}.)  
By taking $\Omega\Conf(m,\R^n)$ to consist of piecewise smooth loops and using de Rham cochains, one can realize this quasi-isomorphism 
via Chen's iterated integrals.

To explain, the de Rham cohomology of a loop space $\Omega M$ was first defined by Chen in \cite{Chen:Itr-Integrals, Chen:Path-Integrals} in a general setting of a differentiable space $M$. More precisely, Chen constructed a double complex
\[
\Chen^{*,*}(M)=\bigoplus_{p,q\geq 0} \Chen^{-p,q}(M),\quad \Chen^{-p,q}(M)=\big\langle \varint \omega_1 \omega_2 \cdots \omega_p\ |\ \omega_i\in \ChdR^{\ast}(M)\big\rangle
\]
of so called {\em iterated integrals} which can be formally viewed as a subcomplex of differential forms $\ChdR^\ast(\Omega M)$ on $\Omega M$. Further, in \cite{Chen:Reduced_bar, Hain:Integrals} Chen defined a natural chain map, which  we will refer to as {\em Chen's integration map}, from the bar complex of $\ChdR^\ast(M)$ to $\Chen^{*,*}(M)$:
\begin{equation}\label{E:ChenIntegrationMap}
\begin{split}
\varint_{\mathrm{Chen}}\colon\Ba(\ChdR^{\ast}(M)) & \longrightarrow \Chen^{*,*}(M)\subset \ChdR^\ast(\Omega M),\\
[\omega_1 | \omega_2 | \cdots | \omega_p] & \longrightarrow \varint \omega_1 \omega_2 \cdots \omega_p.
\end{split}
\end{equation}
This map is a quasi-isomorphism for simply connected $M$ as proved in \cite{Chen:Itr-Integrals, Chen:Path-Integrals}. In view of the classical results \cite{Anderson:HSS, EM:Cotor} one concludes that the iterated integrals determine $\Ho^\ast(\Omega M)$. We will review Chen's complex  in Section \ref{S:Chen} where we deduce Chen's theorem using the machinery of cosimplicial spaces presented in the second part of \refS{CobarCohomology}.

In our setting, the configuration space $\Conf(m,\R^n)$ is simply connected for $n\geq 3$, and the quasi-isomorphism \eqref{E:ChenIntegrationMap} thus becomes the quasi-isomorphism
\begin{equation}\label{E:ChenIntegrationMapBraids}
\Ba(\ChdR^{\ast}(\Conf(m,\R^n))) \longrightarrow \ChdR^*(\Omega\Conf(m,\R^n)).
\end{equation}
This is what was referred to as $\varint_{\mathrm{Chen}}$ in the diagram \eqref{E:MainCommutativeSquare}.

Composing the quasi-isomorphisms $I=\varint_{\mathrm{formality}}$ from \eqref{E:MainZigzag} and $\varint_{\mathrm{Chen}}$ from \eqref{E:ChenIntegrationMapBraids} gives the bottom map in the diagram \eqref{E:MainCommutativeSquare}, which is one of our primary results: 
\begin{repthm}{T:DiagramsBraidCohomology}
For $n\geq 3$ and $m\geq 1$, there is a quasi-isomorphism of CDGAs
\begin{equation}\label{E:MainQuasi-iso}
\Ba(\D(m)) \stackrel{}{\longrightarrow} \ChdR^{\ast}(\Omega\Conf(m,\R^n))
\end{equation}
which can be realized as the formality integration map $I\colon\Ba(\D(m)) \longrightarrow \Ba(\ChdR^{\ast}(\Conf(m,\R^n)))$ followed by Chen's integration map.  Moreover, the induced map in cohomology is an isomorphism of Hopf algebras. 
\end{repthm}

Of course, $\Omega\Conf(m,\R^n)$ is precisely the space of (pure) braids in $\R^{n+1}$.  The left side is thus a diagram complex that captures all of the cohomology of the space of braids in dimension four or greater.  
Accordingly, we call $\Ba(\D(m))$ the \emph{braid diagram complex}.

A consequence of the quasi-isomorphisms in  \eqref{E:MainZigzag} that goes beyond what is captured in the diagram \eqref{E:MainCommutativeSquare} is that we can use the CDGA on the right side to compute the cohomology of the space of braids (\refC{CDGAModelForBraids}).  
This cohomology was already known from work of Cohen and Gitler \cite{CG}.  In fact, they showed that the homology is given as 
\[
\Ho_*(\Omega \Conf(m,\R^n); \Z) \cong \mathrm{U} \mathcal{L}_m(n-2),
\]
where $\mathrm{U}$ denotes the universal enveloping algebra, and $\mathcal{L}_m(n-2)$ is the Lie algebra on generators $B_{ij}$ of degree $n-2$, $1 \leq j < i \leq n$, satisfying the \emph{Yang--Baxter Lie relations} (a.k.a.~the \emph{infinitesimal braid relations}) below, where $B_{ji}:=(-1)^nB_{ij}$.  We will denote these relations by YB:
\begin{equation}
\label{E:Yang-Baxter}
[B_{ij}, B_{jk} +B_{ik}] = 0,\qquad
[B_{ij}, B_{k\ell}] =0,\qquad i, j, k, \ell \quad \text{distinct}.
\end{equation}
The above homology can be further identified with an algebra $\mathcal{BCD}(m)$ of chord diagrams on $m$ (braid) segments (see \refD{BCD(m)}).  Each $B_{ij}$ corresponds to a diagram with just one chord with endpoints on segments $i$ and $j$, and the product in $\mathrm{U}\mathcal{L}_m(n-2)$ corresponds to stacking of diagrams.  
This space of chord diagrams modulo the above relations is dual to a subspace of our braid diagram complex $\Ba(\D(m))$.
Indeed, we represent generators $\alpha_{ij}$ of $\Ho^*(\Conf(m,\R^n))$ by chord diagrams $\Gamma_{ij}$ (linear dual to the homology classes $B_{ij}$).  Applying a result of Priddy \cite{Priddy:Koszul} on the bar complex, we recover the dual in cohomology of the Cohen--Gitler result.  
This calculation may be well-known to experts (see e.g.~\cite{B:Koszul, Bezrukavnikov:Koszul-Config}), 
but we provide the details here in terms of the cohomology of the space of braids.  
\begin{repthm}{T:BraidCohIsCDMod4T}
For $n\geq 3$ and $m\geq 1$, $\Ho^*(\Omega \Conf(m,\R^n))$ is given by linear combinations of chord diagrams satisfying the four-term (4T) and shuffle relations (the dual of the Yang--Baxter Lie relations).
\end{repthm}

There are two consequences we can deduce from Theorems \ref{T:DiagramsBraidCohomology} and \ref{T:BraidCohIsCDMod4T} which recover and refine theorems of Kohno \cite{Kohno:LoopsFiniteType, Kohno:BarComplex}, who studied Chen's constructions for braids  \cite{Kohno:Vassiliev, Kohno:VassilievBraids, Kohno:LoopsFiniteType} in the context of finite type invariants \cite{Vass:Cohom}.

Namely, let $P_m:=\pi_1(\Conf(m,\R^2))$, the pure braid group on $m$ strands.  Let $J$ be the augmentation ideal in the group ring $\R[P_m]$.  If we set $V_k(P_m):= \R[P_m]/(J^{k+1})$, then $V_k(P_m)$ is the space of $\R$-valued finite type $k$ (Vassiliev) invariants of pure braids on $m$ strands.
Functionals on chord diagrams with $k$ chords which vanish on sums of the form \eqref{E:Yang-Baxter} are well-known to correspond to finite type $k$  invariants of pure braids:
\[
(\mathcal{BCD}_k(m) / (\mathrm{YB}))^* \cong V_k(P_m) / V_{k-1}(P_m).
\]
The left side also corresponds exactly to classes of degree $k(n-2)$ in $\Ho^*(\Omega(\Conf(m,\R^n)))$ in \refT{BraidCohIsCDMod4T}.  This theorem thus immediately implies that
\begin{equation}
\label{E:BraidCohIsVassilievSpace}
\Ho^{k(n-2)}(\Omega \Conf(m,\R^n)) \cong V_k(P_m) / V_{k-1}(P_m),
\end{equation}
as observed by Kohno \cite[Theorem 4.1]{Kohno:LoopsFiniteType}.
He also notes that for even $n$, the filtration quotients fit together to give isomorphisms of Hopf algebras, by considering all possible $k$ at once.  In the classical case of  $\Omega\Conf(m,\R^2)$, Kohno additionally uses Chen's integrals and residual nilpotence of $P_m$ to show in \cite{Kohno:VassilievBraids} that finite type invariants separate pure braids, or equivalently $\Ho_0(\Omega\Conf(m,\R^2))$.

This connection to finite type invariants is more than peripheral; we use it to establish the relationship to Bott--Taubes configuration space integrals for long links in $\R^{n+1}$, $n\geq 3$ (as briefly discussed in \refS{IntroBT}).

Kohno's other result of relevance here is \cite[Theorem 5.1]{Kohno:LoopsFiniteType}.  Our \refT{DiagramsBraidCohomology} refines that result, which states that every cohomology class in the space of braids is represented by a Chen integral of the form 
\begin{equation}
\label{E:Kohno}
\sum a_{i_1 j_1 \dots i_p j_p}\varint \omega_{i_1j_1} \cdots \omega_{i_p j_p} + \bigl(\text{iterated integrals of length}<p\bigr)
\end{equation}
where the coefficients $a_{i_1 j_1 \ldots i_p j_p}$ satisfy the Yang--Baxter Lie relations.  
Specifically,  for $\gamma \in \Ba(\D(m))$, write $\gamma = \gamma' + \gamma''$, where $\gamma'$ consists of all the chord diagram terms in $\gamma$.
Let $\int \gamma$ denote the image of $\gamma$ under the map in \refT{DiagramsBraidCohomology}. 
Then the expression \eqref{E:Kohno} corresponds exactly to $\int \gamma' + \int \gamma''$.
In the formality zig-zag \eqref{E:MainZigzag}, the map $\overline{I}$ to cohomology is zero on $\gamma''$.  Thus $\overline{I}$ captures $\int \gamma'$, where the Yang--Baxter relations are necessary conditions on $\gamma'$ for $\gamma$ to be closed.  Our refinement is that via the formality integration map $I$ to cochains, we get the full expression for $\int\gamma$, including the iterated integrals of length $<p$.

For example, the following linear combination $\gamma'$ of chord diagrams on $m=3$ strands, with $p=2$ chords, satisfies the Yang--Baxter relations:
\[
\raisebox{-1.4pc}{\includegraphics[scale=0.2]{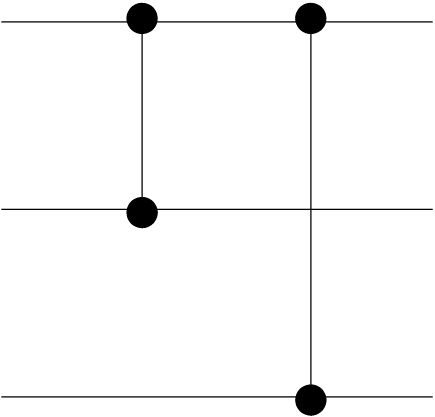}} -
\raisebox{-1.4pc}{\includegraphics[scale=0.2]{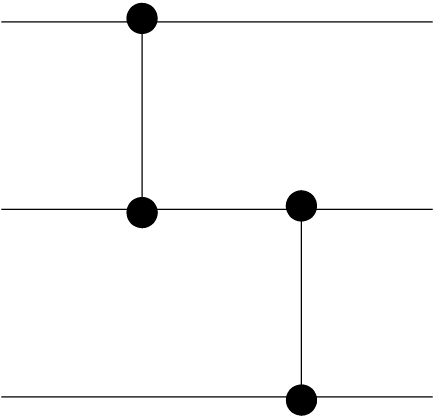}} +
\raisebox{-1.4pc}{\includegraphics[scale=0.2]{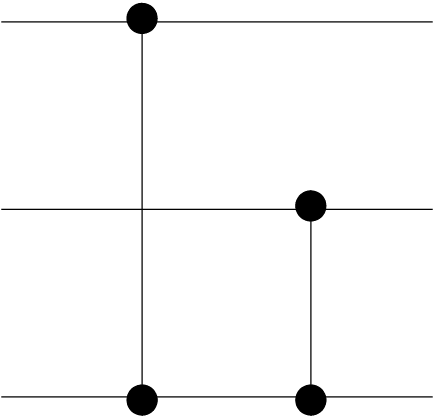}}
\,\,\, \in \,\, (\mathcal{BCD}(3)/(\mathrm{YB}))^*
\]
One can show that this class spans $(\mathcal{BCD}(3)/(\mathrm{YB}))^*$ and hence for $n\geq 3$, it spans $\Ho^{2(n-2)}(\Omega \Conf(m,\R^n))$.  However, to get a formula for an integral, one needs the full expression
\begin{equation}\label{E:mu_123}
\mu_{123} := 
\raisebox{-1.4pc}{\includegraphics[scale=0.2]{LM.eps}} -
\raisebox{-1.4pc}{\includegraphics[scale=0.2]{LR.eps}} +
\raisebox{-1.4pc}{\includegraphics[scale=0.2]{MR.eps}} -
\raisebox{-1.4pc}{\includegraphics[scale=0.2]{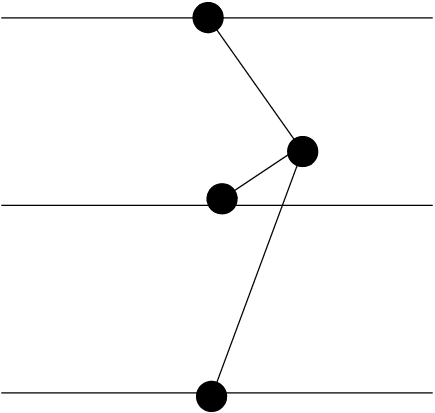}}
  \,\,\, \in \,\, \Ba(\D(m))
\end{equation}
for the cocycle corresponding to this class.  
(This is because for $n\geq 3$, the three-term relation \eqref{E:CohomCond5} in the cohomology of configuration space is only an exact form, not identically zero.)
The map $\overline{I}$ to $(\mathcal{BCD}(3)/(\mathrm{YB}))^*$ is zero on the tripod $\gamma''$ (the rightmost diagram in \eqref{E:mu_123}), but the integral $\int \gamma''$ may be nonzero.  See Example \ref{3StrandsOrder3}, also mentioned in \refS{IntroBT}, for another example.

\subsection{Viewing braids as links}\label{S:BraidsAsLinks}
Let 
$$
\Lk_m^{n+1}=\Emb_c\left(\coprod_m \R, \, \R^{n+1}\right)
$$ 
be the space of \emph{compactly supported long links of $m$ strands in $\R^{n+1}$}, namely the space of embeddings of $\coprod_m \R$ in $\R^{n+1}$ which outside of $\coprod_m I$ agree with $2m$ fixed affine-linear maps with disjoint images.  We may assume that these linear maps pass through the points $(x_i,0)$ and $(x_i,1)$, where $(x_1,...,x_m)$ is the basepoint in $\Conf(m,\R^n)$.
The space $\Omega \Conf(m,\R^n)$ of braids in $\R^{n+1}$ is a subspace of $\Lk_m^{n+1}$.
More precisely, view a braid $\beta$ as $m$ maps $x_i\colon I \to \R^{n}$, $i=1,...,m$, such that $x_i(t) \neq x_j(t)$ for all $i \neq j$ and for all $t \in I$ and such that $x_i(0)=x_i(1)$ is the basepoint.  
A long link is determined by $m$ embeddings $I \to \R^{n+1}$ with disjoint images, with the endpoints mapping to the boundary of the box.  (This embedding is extended outside the box in the prescribed way, smoothing the maps at the boundary of the box if necessary.)
Then the inclusion $\iota$ of the space of braids into the space of links is given by
\begin{align*}
\iota\colon  \Omega \Conf(m,\R^n) & \longrightarrow  \Lk_m^{n+1} \\
 [\beta\colon t  \mapsto (x_1(t),...,x_m(t))] & \longmapsto 
[ (t_i \mapsto (t_i, x_i(t_i)) )_{i=1}^m].
\end{align*}
The right vertical map $\iota^*$ in the diagram \eqref{E:MainCommutativeSquare} is then just the map induced on cohomology by $\iota$ by the standard pullback of (smooth) singular cochains.

\subsection{Bott--Taubes integrals for long links and the map of diagram complexes}
\label{S:IntroBT}
Configuration space integrals, originally due to Bott and Taubes \cite{BT}, have shown to be effective in understanding the cohomology of knots and links. The application of this theory to long links was studied in \cite{KMV:FTHoLinks}, where one defines a \emph{link diagram complex} $\LD(m)$ and a chain map 
$$
\LD(m)\longrightarrow \ChdR^\ast(\Lk_m^{n+1}),
$$
often called the \emph{Bott--Taubes configuration space integral}. The map is given by integrating differential forms over configuration spaces, where the combinatorics of the integrals is encoded by the complex $\LD(m)$.  We will review the complex $\LD(m)$ and the above map in \refS{B-TRelation}.  

The above map essentially induces the top horizontal map $\int_{\mathrm{Bott-Taubes}}$ from the diagram \eqref{E:MainCommutativeSquare} on cohomology, except one must first restrict to a subspace $\LD_\mathrm{f}(m)$ of forest diagrams.  The reason why this restriction is needed comes from the last remaining map in the diagram, $\phi$.

To explain, we will construct a map from the link diagram complex to the braid diagram complex (\refD{LinksToBraidsMap}),
\begin{equation}\label{E:MapPhiIntro}
\phi\colon\LD(m)\longrightarrow \Ba(\D(m)),
\end{equation}
given by modifying a link diagram in almost the simplest possible way to make it graphically look like a braid diagram.  
However, we will only be able to show that $\phi$ is a chain map when restricted to a subspace of cocycles of trivalent forests in $\LD_\mathrm{f}(m)$.  The complex $\LD(m)$ is bigraded  by \emph{order} and \emph{defect} (see \refS{OrderDefectForBraidDiagrams}), and the restriction of \eqref{E:MapPhiIntro} maps from the cohomology with defect zero onto the cohomology of $\Ba(\D(m))$ (\refT{DiagramsSurjOnCohom}).
The proof of this surjectivity uses results from the classical case $n=2$, i.e.~Vassiliev invariants of long links and pure braids in $\R^3$.
We will thus get the left column of the square \eqref{E:MainCommutativeSquare}, where we abuse notation by using $\phi$ again for the map induced on cohomology:
\[
\xymatrix{
\phi\colon\Ho^{0,n}(\LD_\mathrm{f}(m)) \ar@{->>}[r] & \Ho^*(\Ba(\D(m)))
}
\]
We will then show that $\phi$ is compatible with integration, giving us the commutativity of that square, thereby completing the proof of \refT{MainDiagramThm}.

As one consequence (Corollary \ref{C:LinksSurjectBraids}), we get that for $n\geq 3$, the inclusion of the space of braids into the space of long links in $\R^{n+1}$ induces a surjection in cohomology
\begin{equation}
\label{E:LinksToBraidsSurj}
\Ho^*(\mathcal{L}_m^{n+1}) \twoheadrightarrow \Ho^*(\Omega \Conf(m, \R^n)).
\end{equation}
Although the surjectivity of $\phi$ comes from knowledge of Vassiliev invariants rather than a diagrammatic argument involving $\phi$ itself, this Corollary requires the full diagram \eqref{E:MainCommutativeSquare}.  Indeed, to know that the surjection \eqref{E:LinksToBraidsSurj} is induced by the inclusion of spaces, one needs the commutativity of \eqref{E:MainCommutativeSquare} and thus the map $\phi$ itself.

Another consequence is that $\Ba(\D(m))$ provides integral formulae with no more terms than those coming from $\LD(m)$, and in many cases strictly fewer. 
On the one hand, the element $\mu_{123} \in \Ba(\D(m))$ from  \eqref{E:mu_123} coincides exactly with an element in the diagram complex $\LD(m)$ for long links, so no simplification occurs in this case.  
On the other hand (Example \ref{3StrandsOrder3}), we may consider 

\[
\raisebox{-1.4pc}{\includegraphics[scale=0.2]{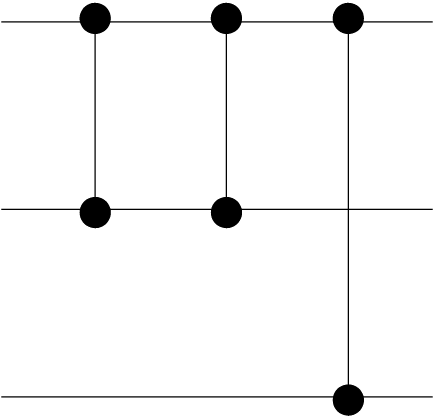}} - 
\raisebox{-1.4pc}{\includegraphics[scale=0.2]{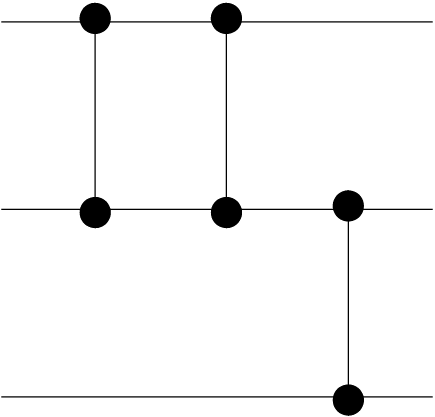}} + 
\raisebox{-1.4pc}{\includegraphics[scale=0.2]{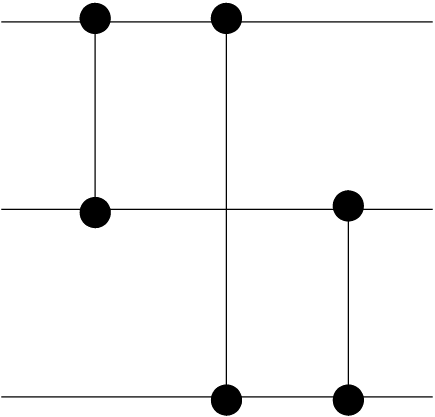}} - 
\raisebox{-1.4pc}{\includegraphics[scale=0.2]{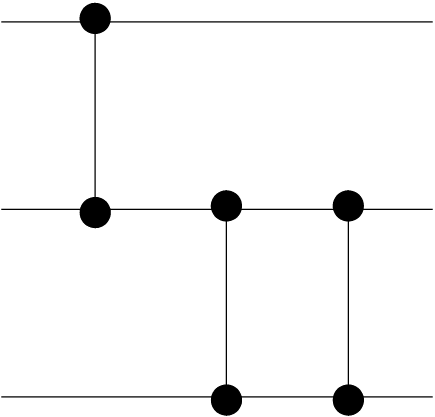}} + 
\raisebox{-1.4pc}{\includegraphics[scale=0.2]{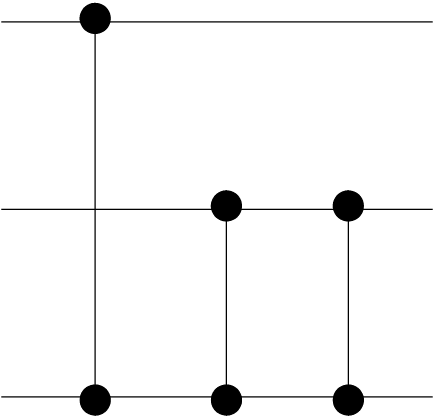}}  \pm 
\raisebox{-1.4pc}{\includegraphics[scale=0.2]{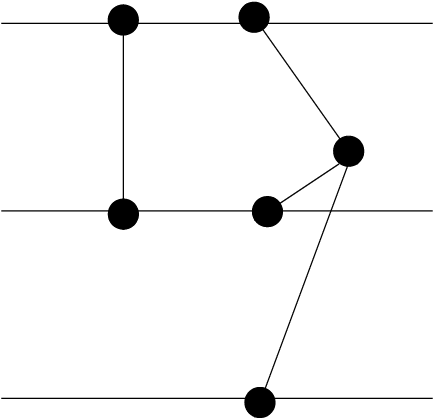}} \pm 
\raisebox{-1.4pc}{\includegraphics[scale=0.2]{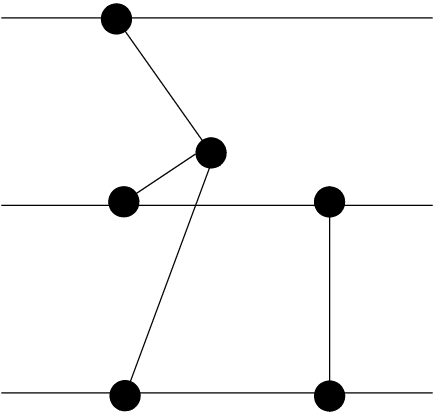}} \pm 
\raisebox{-1.4pc}{\includegraphics[scale=0.2]{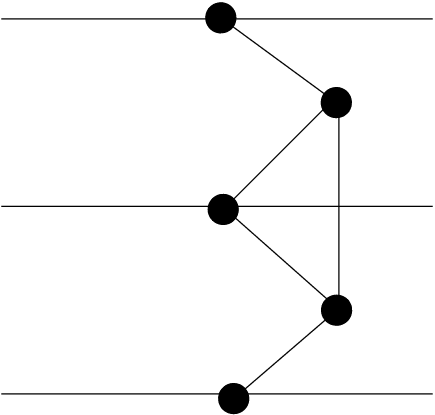}},
\]

a cocycle with 8 terms in $\Ba(\D(m))$ representing a class in $\Ho^{3(n-2)}(\Omega(\Conf(m,\R^n))$.  
This class also comes from a cocycle in $\LD(m)$ -- as guaranteed by \refT{DiagramsSurjOnCohom} -- but the latter cocycle has (at the very least) more than 11 terms.
In the classical case $n=2$, this cocycle corresponds to a finite type 3  invariant of pure braids.  

For $n=2$, $\mu_{123}$ corresponds to the triple linking number for long links, and the lack of simplification under the map $\phi$ is representative of the situation for all ``Milnor link homotopy invariant'' classes, as follows:  
Since Milnor invariants are of finite type, the identification \eqref{E:BraidCohIsVassilievSpace} provides a cohomology class for each such invariant.  In \refP{HtpyDiagramsAreGood}, we will show that for such a class, the corresponding sum of diagrams in $\Ba(\D(m))$ (and thus the associated integral) has exactly the same terms as the corresponding one in $\LD(m)$.

\subsection{Heuristics and remarks}\label{S:Heuristics}
\refT{MainDiagramThm} unifies various approaches to the study of the cohomology of spaces of braids: the bar construction, configuration space formality, Chen integrals, and Bott-Taubes configuration space integrals.  While each of these constructions is familiar, we bring them together in a novel way through exposition that is as self-contained as possible.

The heuristic behind the compatibility of these constructions is as follows: The complex $\LD(m)$ encodes the combinatorics of the integration that produces cohomology classes of links.  It indexes collections of configuration points where some points move on the link and some points move in the ambient space.  Certain directions between the points are recorded, and, intuitively, these directions capture the complexity of the link.  
In this construction, we may choose the directions to lie in an equatorial $S^{n-1} \subset S^n$ perpendicular to the $x_1$ axis (where $(x_1,\ldots,x_{n+1})$ are coordinates on $\R^{n+1}$).
For an arbitrary link, such directions may be attained 
regardless of where the configuration points are on the strands.  However, because braids always ``flow with constant speed in the $x_1$ direction,'' such directions are only attained between configuration points that lie in ``time slice'' $\R^n$ perpendicular to the flow of the braids.  Thus for braids, it suffices to consider diagrams where components lie in parallel planes.
The combinatorics and algebra of such diagrams are organized precisely by the bar construction.  The map \eqref{E:MapPhiIntro} makes this precise as it sends a diagram in $\LD(m)$ (that indexes a configuration of points anywhere) to a diagram that indexes configurations lying in parallel planes perpendicular to the flow of the braid.

Now, integrating forms over configuration spaces indexed by the bar complex of diagrams to produce forms on the space of braids can be regarded as a two-step process:  First take a diagram with free vertices and integrate over all possible positions of the configuration points (in $\R^n$) corresponding to those free vertices. This is the formality integration. What remains is an integral over all possible positions of the time slices, each given by $(0\leq t_1 \leq \ldots \leq t_p\leq 1) \in \Delta^p$.
The Chen iterated integral is this second step, which produces a form on the space of braids.
The bottom map in \eqref{E:MainCommutativeSquare} is the composition of these two integrations.

 Since the map  \eqref{E:MainQuasi-iso} is a quasi-isomorphism, it is natural to ask whether the same is true for the map \eqref{E:MapPhiIntro}, namely whether $\LD(m)$ captures all the cohomology of links (and knots).  This is a standing conjecture, posed in the early days of the Bott-Taubes configuration space integration \cite{CCRL}.  Our affirmative answer for the case of braids provides evidence that this conjecture might indeed be true.

In the case of classical braids, our \refT{MainDiagramThm} still applies, except that the bottom horizontal map is not an isomorphism (see \refS{R^3BraidsCohomology}).  
In this situation, our work bears some relation to the Kontsevich integral, which is a universal finite type invariant of tangles (including knots, links, and braids) in $\R^3$, meaning that it provides a construction of any finite type invariant out of certain combinations of diagrams.\footnote{Note that the diagrams in $\D(m)$ and $\LD(m)$ represent elements dual to the space of diagrams considered e.g.~by Bar-Natan \cite{BN:Vass}.  This is analogous to our comparison with the Cohen--Gitler theorem above, where we consider cohomology rather than homology.}  
In fact, for $n=2$, the zig-zag \eqref{E:ConfFormalityQuasiIsosIntro} can be replaced by a map directly from cohomology to cochains, requiring no integration.  
Then the map \eqref{E:MainQuasi-iso} is given simply by the Chen integral, as studied in \cite{Kohno:Vassiliev}.  
Classes in $\Ho^*(\Conf(m,\R^2))$ can be identified with {chord} diagrams in $\D(m)$, and the Chen integral for classical braids is the prototype of the Kontsevich integral for general tangles.  
In this setting, chords correspond to propagators coming from $S^1$.  
The Bott--Taubes configuration space integral is, up to an anomaly term, also a universal finite type invariant of knots, links, and braids in $\R^3$, but with a propagator coming from $S^2$.  
It is an open conjecture that in the classical case, the Bott--Taubes integral (a.k.a.~the Chern--Simons invariant) and Kontsevich integral (a.k.a~the Knizhnik--Zamolodchikov invariant) agree \cite[p. 149]{K:Vass}.  Our \refT{MainDiagramThm} 
affirms the compatibility in the case of braids, not only in the classical case but also in the more general setting of braids in Euclidean spaces of arbitrary dimension. 

In the view of the generalization of Chen's integrals to mapping spaces $\Map(M,N)$ by \cite{GTZ:ChenMapping, BottSegal}, the study of the cohomology of $\Omega \Conf(m, \R^n)$ via the diagram complex $\D(m)$ should admit a natural generalization to the space $\Map(M,\Conf(m, \R^n))$.  This should have interesting applications to the homotopy invariants of link maps, as originally investigated by Koschorke in \cite{Ko:Linkhomotopy, Ko:GenMilnor}, as well as geometric questions in knot theory and related areas of physics \cite{DGKMSV:Triplelinking, KV:VolpresFields, Kom:Helicity, KM:Ropelength}.  
Graph complexes similar to $\D(m)$ appear in recent work generalizing this CDGA model from configurations in Euclidean space to manifolds \cite{CW:ModelConfig, Idrissi:ModelConfig}, as well as recent work on spaces of long embeddings $\Emb(\R^m, \R^n)$ and their deloopings \cite{AT:Graph, FTW:EnOperads}.  
We will explore these avenues and connections in future work.

 \subsection{Organization of the paper}
 In \refS{CobarCohomology}, we review the bar construction on an algebra and the geometric cobar construction on a space.  We then recall the notion of rational formality of a space and examine how it interacts with the cobar construction.  
In \refS{CobarBraidCohomology}, we apply this general setup to the case of configuration spaces of points in Euclidean space.  We review the diagram complex and integrals that guarantee their formality, and we use formality to rederive the calculation the cohomology of spaces of braids in terms of chord diagrams.
In \refS{Chen}, we review Chen's iterated integrals for the loop space of a simply connected manifold and recover Chen's Theorem on de Rham cohomology of the loop space as a consequence of general results of \refS{CobarCohomology}. 
In \refS{B-TRelation}, we first review Bott--Taubes configuration space integrals for long links, including the associated diagram complex.  We then define the map of diagram complexes and prove its desired properties, establishing our main theorem.  
The Appendix has a proof of a statement related to orientations of diagrams.


\subsection{Acknowledgments}\label{S:Acknowledgments}
The third author would like to thank the Simons Foundation for its support and University of Sarajevo for its hospitality.  He would also like the thank Pascal Lambrechts and Yves F\' elix for helpful discussions on formality of mapping spaces.


\section{The cobar construction, formality, and the cohomology of the loop space}
\label{S:CobarCohomology}


In this section, we discuss the interaction of rational formality with the geometric cobar construction, which is a standard model for the loop space.  The main consequences on the cohomology of the loop space are Theorem \ref{T:BarFormal} and Corollary \ref{C:CDGAModelForOmegaX}.  These are general results which may be of independent interest and will be applied to the special case of braids in \refS{CobarBraidCohomology}.  
We will assume that the reader is familiar with the basics of (co)simplicial objects and totalizations, as well as with (graded-)commutative differential graded algebras (CDGAs).


\subsection{Bar construction of an algebra}\label{S:BarConstruction}


Suppose $\calA$ is a differential graded algebra (DGA) with differential $d$ that raises degree.  Then the \emph{bar construction} $\Ba_\bullet(\calA)$ is a simplicial chain complex with
\begin{equation}\label{E:BarComplex}
\Ba_p(\calA) = \calA^{\otimes p},
\end{equation}
spanned by elements $a_1 \otimes \cdots  \otimes a_p$. In keeping with more standard notation, we will denote these elements by $[a_1 | \cdots  | a_p]$.
Face maps are given by
\begin{align}
d_i\colon \calA^{\otimes p} & \longrightarrow \calA^{\otimes (p-1)} , \ \ \ 0\leq i\leq p \label{E:BarFaces}\\
[a_1 | \cdots  | a_p] & \longmapsto
\begin{cases} \notag
[a_2 | \cdots  | a_p],            & i=0;  \\
[a_1 | \cdots | a_ia_{i+1}|\cdots | a_p], & 1\leq i\leq p-1;  \\
[a_1 | \cdots  |a_{p-1}],            & i=p,  \\
\end{cases}
\end{align}
and degeneracies are given by
\begin{align}
s_i\colon \calA^{\otimes p} & \longrightarrow \calA^{\otimes p+1} , \ \ \ 0\leq i\leq p\label{E:BarDegeneracies} \\
[a_1 | \cdots  | a_p]  &  \longmapsto
[a_1 | \cdots | a_{i}| \,1\, | a_{i+1}|\cdots | a_p].\notag
\end{align}

One can form a double complex 
\begin{equation}\label{E:BarDoubleComplex}
\Ba^{*,*}(\calA)=\bigoplus_{p,q\geq 0} \Ba^{-p,q}(\calA)
\end{equation}
where
\begin{equation}\label{E:BarNormalizedDoubleComplex}
\Ba^{-p,q}(\calA)=(\calA^{\otimes p})^q/\Big(\sum_{i=0}^{p-1} \operatorname{im}(s_i)\Big)
\end{equation}
where $(\calA^{\otimes p})^q$ means the monomials $\calA^{\otimes p}$ whose degrees add up to $q$.

Since we took out the images of the degeneracies, this produces the \emph{normalized} complex; the degeneracies form an acyclic part of the double complex.  The differentials are
\begin{align}
\delta_1\colon \Ba^{-p,q}(\calA) & \longrightarrow \Ba^{-p,q+1}(\calA)\label{E:VertDiffBar} \\
[a_1 | \cdots  | a_p] & \longmapsto \sum_{i=1}^p (-1)^{\varepsilon(i-1)}[a_1| \cdots | da_i | \cdots | a_p], \notag\\
\delta_2\colon \Ba^{-p,q}(\calA) & \longrightarrow \Ba^{-p+1,q}(\calA) \label{E:HorDiffBar}\\
[a_1 | \cdots  | a_p] & \longmapsto \sum_{i=0}^{p} (-1)^{\varepsilon(i)} d_i[a_1 | \cdots  | a_p]\notag
\end{align}
where $\varepsilon(i) = \sum_{j=1}^i (|a_j|+1)$.  (In particular, $\varepsilon(0)=0$, the empty sum.)
We will refer to these differentials as ``vertical'' and ``horizontal'', respectively.

Now take the total complex, or totalization, of the double complex $\Ba^{*,*}(\calA)$, which can be denoted by $\Tot \Ba^{*,*}(\calA)$, but we will use the more standard shorter notation
\begin{equation}\label{E:BarTotalComplex}
\Ba(\calA):=\Tot \Ba^{*,*}(\calA).
\end{equation}
In degree $k$, this complex is 
\begin{equation}\label{E:TotCochainsCosimplicial}
\Ba(\calA)^k =\bigoplus_{k=q-p}\Ba^{-p,q} (\calA)
\end{equation}
with the total differential 
\begin{equation}\label{E:BarTotalDiff}
\delta\colon \Ba^{-p,q}(\calA) \longrightarrow \Ba^{-p,q+1}(\calA)\oplus \Ba^{-p+1,q}(\calA)
\end{equation}
given by $\delta=\delta_1+\delta_2$.  This is the \emph{(normalized) bar complex}, or the \emph{(normalized) Hochschild complex} of $\calA$.  Its homology is the \emph{Hochschild homology} of $\calA$.

The complex $\Ba(\calA)$ is always a differential graded coalgebra.  The coproduct $\Delta$ (which preserves total degree) is given by deconcatenation
\begin{align}
\Delta \colon \Ba^{*,*}(\calA) & \longrightarrow \Ba^{*,*}(\calA) \otimes \Ba^{*,*}(\calA) \label{E:CoproductOnBar} \\
[a_1 | \cdots  | a_p] & \longmapsto \sum_{i=0}^p 
[a_1| \cdots | a_i] \otimes [a_{i+1} | \cdots | a_p], \notag
\end{align}
where the terms for $i=0$ and $i=p$ are respectively $1 \otimes [a_1 | \cdots  | a_p]$ and $[a_1 | \cdots  | a_p] \otimes 1$.

If $\calA$ is {graded-commutative} (i.e.~a CDGA), then the bar complex has a graded-commutative shuffle product which satisfies the graded Leibniz rule with respect to the differential $\delta$.  It is given by
\begin{equation}
\label{E:BarShuffle}
[a_1 | \cdots  | a_p] \ast 
[a_{p+1} | \cdots  | a_{p+p'}] =
\sum_{\mbox{shuffles $\sigma$ of $p+p'$}} (-1)^{\tau(i)} 
[a_{\sigma(1)} | \cdots | a_{\sigma(p+p')}]
\end{equation}
where $\tau(i) = \sum_{j=1}^i (|a_{\sigma(j)}|+1)$.  
In this case, $\Ba^{*,*}(\calA)$ is then a differential graded Hopf algebra.  
Moreover, the bar construction $B(-)$ is functorial and takes quasi-isomorphisms of CDGAs to quasi-isomorphisms of Hopf algebras.


\subsection{Cobar model for the loop space}\label{S:Cosimplicial}

The geometric cobar construction is a space-level model for loop spaces, which appeared at least as early as work of Rector \cite{Rector:E-MSS}.  At the chain level, the cobar construction goes back to Adams and Hilton \cite{ Adams:CobarPNAS, Adams:Cobar, Adams-Hilton}.  Strictly speaking, the bar construction on an algebra suffices for our computations and main theorem, but lifting to the space level provides conceptual clarity as to why the bar construction is related to loop space cohomology.

Let $X$ be a space with basepoint $*$, and let 
$$
\Omega X =\Map_*(S^1,X)
$$
denote the usual based loop space of $X$, consisting of the set of maps sending a basepoint in $S^1$ to the basepoint in $X$.  This space is endowed with the compact-open topology. There is a standard cosimplicial model $\Xdot$ for $\Omega X$, called the \emph{cobar construction on $X$}, which consists of the collection of spaces $X^p$, $p\geq 0$, with coface maps
\begin{align}
d^i\colon X^{p} & \longrightarrow X^{p+1}, \ \ \ 0\leq i\leq p+1 \label{E:LoopCofaces} \\
(x_1, x_2, ..., x_p) & \longmapsto
\begin{cases} \notag
(*, x_1, x_2, ..., x_p),            & i=0;  \\
(x_1, x_2, ...,x_i, x_i, ..., x_p), & 1\leq i\leq p;  \\
(x_1, x_2, ..., x_p, *),            & i=p+1,  \\
\end{cases}
\end{align}
and codegeneracy maps
\begin{align}
s^i\colon X^{p} & \longrightarrow X^{p-1} , \ \ \ 0\leq i\leq p-1\label{E:LoopCodegeneracies} \\
(x_1, x_2, ..., x_p) &  \longmapsto
(x_1, x_2, ...,x_{i}, x_{i+2}, ... x_p).\notag
\end{align}

Recall also the standard cosimplicial simplex $\Delta^\bullet$ which consists of the simplices 
$$
\Delta^p=\{(t_1,...,t_p)\in \R^p \colon 0\leq t_1\leq t_2\leq \cdots \leq t_p\leq 1\}, \ \ \ p\geq 0,
$$ 
with cofaces given by
\begin{align}
d^i\colon \Delta^p & \longrightarrow \Delta^{p+1}, \ \ \ 0\leq i\leq p+1\label{E:DeltaCofaces} \\
(t_1, ..., t_p) &\begin{cases} \notag
(0, t_1, t_2, ..., t_p),            & i=0;  \\
(t_1, t_2, ...,t_i, t_i, ..., t_p), & 1\leq i\leq p;  \\
(t_1, t_2, ..., t_p, 1),            & i=p+1,  \\
\end{cases}
\end{align}
and codegeneracies given by 
\begin{align}
s^i\colon \Delta^{p} & \longrightarrow \Delta^{p-1} , \ \ \ 0\leq i\leq p-1\label{E:DeltaCodegeneracies} \\
(t_1, ..., t_p) &  \longmapsto
(t_1, t_2, ..., t_i, t_{i+2}, ... t_p).\notag
\end{align}

Now consider the (geometric) totalization of $X^\bullet$,
\begin{equation}\label{E:TotalizationDef}
\Tot X^\bullet=\Map(\Delta^\bullet, X^\bullet),
\end{equation}
the space of maps from $\Delta^\bullet$ to $X^\bullet$.  This is the subspace of the product of spaces of maps (with compact-open topology) $\Map(\Delta^p, X^p)$ for all $p\geq 0$ containing those maps that commute with the cofaces and the codegeneracies.

We have an evaluation map 
\begin{align}
\ev_p\colon  \Delta^p\times \Omega X & \longrightarrow X^p \label{E:Evaluation}\\
(t_1, ..., t_p, \gamma) & \longmapsto (\gamma(t_1), ..., \gamma(t_p)). \notag
\end{align} 
This has an adjoint
$$
\Omega X\longrightarrow \Map(\Delta^p, X^p)
$$
It is not hard to see that the collection of such maps for all $p\geq 0$ commutes with cofaces $d^i$ and codegeneracies $s^i$.  In other words, the above map passes to a map
\begin{equation}\label{E:Loops=Tot}
\Omega X\longrightarrow \Tot X^\bullet.
\end{equation}
A standard result (see, for example, \cite[Example 9.2.3]{MV:Cubes}) is that the map \eqref{E:Loops=Tot} is a homeomorphism.

\begin{rem}\label{R:MapsModel}
The cobar construction and the equivalence \eqref{E:Loops=Tot} are an instance of the cosimplicial model for the based mapping space $\Map_*(X,Y)$ obtained by applying $\Map_*(-,Y)$ to a simplicial model $X_\bullet$ for $X$.  Then we have  $\Map_*(X,Y) = \Tot \Map_*(X_\bullet,Y)$ \cite[Proposition 5.1]{BottSegal}. (Also see \cite[Example 2.3.1]{GTZ:ChenMapping} for more on how the cobar construction gives an example of this general procedure).  
\end{rem}





Now apply cochains $\Ch^*$ with coefficients in a field to $X^\bullet$.  This produces a simplicial DGA $\Ch^*(X^\bullet)$ with face maps $(d^i)^*$ and degeneracy maps $(s^i)^*$.  
We then get a double complex
\begin{equation}\label{E:CosimplicialDoubleComplex}
\Ch^{*,*}(\Xdot)=\bigoplus_{p,q\geq 0} \Ch^{-p,q}(\Xdot)
\end{equation}
where
\begin{align}
\Ch^{-p,q}(\Xdot) & =\Ch^q (X^p)/\Big(\sum_{i=0}^{p-1} \operatorname{im}((s^i)^*)\Big) \label{E:NormalizedDoubleComplex} \\
& \cong \Big(\bigoplus_{i_1+i_2+\cdots+i_p=q,\ i_k\geq 0} \Ch^{i_1} (X)\otimes\cdots\otimes\Ch^{i_p} (X)\Big)/\Big(\sum_{i=0}^{p-1} \operatorname{im}((s^i)^*)\Big) \notag
\end{align}

The differentials are 
\begin{align}
\delta_1\colon \Ch^{-p,q}(\Xdot) & \longrightarrow \Ch^{-p,q+1}(\Xdot)\label{E:Diff1Cobar} \\
\alpha & \longmapsto (-1)^p d\alpha \notag\\
\delta_2\colon \Ch^{-p,q}(\Xdot) & \longrightarrow \Ch^{-p+1,q}(\Xdot) \label{E:Diff2Cobar}\\
\alpha & \longmapsto \sum_{i=0}^{p+1}(-1)^i(d^i)^*\alpha \notag
\end{align}

\begin{rem}\label{R:CohomologySS}
The double complex $\Ch^{*,*}(\Xdot)$ is the $E_0$ page of the standard Bousfield-Kan cohomology spectral sequence associated to the cosimplicial space $\Xdot$.
\end{rem}

We finally have the totalization 
\begin{equation}\label{E:TotalComplex}
\Tot \Ch^{*,*}(X^\bullet)
\end{equation}
which is in degree $k$ given by 
\begin{equation}\label{E:TotCochainsCosimplicial}
\Tot \Ch^{*,*}(X^\bullet)^k =\bigoplus_{k=q-p}\Ch^{-p,q} (X^\bullet).
\end{equation}
The total differential 
\begin{equation}\label{E:TotalDiff}
\delta\colon \Ch^{-p,q}(\Xdot) \longrightarrow \Ch^{-p,q+1}(\Xdot)\oplus \Ch^{-p+1,q}(\Xdot)
\end{equation}
is as usual given by $\delta=\delta_1+\delta_2$.  The algebra structure is given by a shuffle product as in \eqref{E:BarShuffle}  (also see \cite[p.~293]{BottSegal} for more detail about the shuffle in the particular case of cochains).

The following two results are well-known and go back to  Anderson \cite{Anderson:HSS}, Bendersky-Gitler \cite{BG:FunctSpaces},   and Bott-Segal \cite{BottSegal}.  The first says that $\Tot \Ch^{*,*}(X^\bullet)$ can readily be identified with the bar complex from \refS{BarConstruction} and it follows from the  K\"{u}nneth Theorem and \cite[Proposition 5.9]{BottSegal}.

\begin{prop}\label{P:ModelsInduceQuasiIsos}
If $\calA$ is CDGA that is quasi-isomorphic to $\Ch^*(X)$, then the CDGAs $\Ba_\bullet(\calA)$ and $\Ch^{*}(X^\bullet)$ are quasi-isomorphic.  It follows that the double complexes $\Ba^{*,*}(\calA)$ and $\Ch^{*,*}(X^\bullet)$, and hence their totalizations $\Ba(\calA)$ and $\Tot\Ch^{*,*}(X^\bullet)$, are also quasi-isomorphic as CDGAs.  
\end{prop}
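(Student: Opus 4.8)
The plan is to prove the statement \emph{levelwise} in simplicial degree and then promote the comparison to the totalizations by a spectral sequence argument. Note first that $\Ba_\bullet(\calA)$ is a simplicial DGA whose $p$-th level is $\calA^{\otimes p}$, and that $\Ch^*(X^\bullet)$ is a simplicial DGA whose $p$-th level is $\Ch^*(X^p)$; the face and degeneracy maps are DGA maps because $\calA$ is graded-commutative and the coface $d^i$ of \eqref{E:LoopCofaces} is a diagonal or basepoint insertion. Since we are working over a field, the K\"unneth theorem (equivalently the Eilenberg--Zilber cross product) supplies a natural quasi-isomorphism of DGAs $\Ch^*(X)^{\otimes p}\stackrel{\simeq}{\longrightarrow}\Ch^*(X^p)$, and tensoring the hypothesized quasi-isomorphism $\calA\stackrel{\simeq}{\longrightarrow}\Ch^*(X)$ (or its zig-zag, each arrow of which remains a quasi-isomorphism after $\otimes$ over a field) $p$ times gives $\calA^{\otimes p}\stackrel{\simeq}{\longrightarrow}\Ch^*(X)^{\otimes p}$. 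Composing yields, for each $p$, a quasi-isomorphism $\Ba_p(\calA)=\calA^{\otimes p}\stackrel{\simeq}{\longrightarrow}\Ch^*(X^p)$.

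The crucial point is that these levelwise maps assemble into a map of simplicial DGAs. The bar face $d_i$ of \eqref{E:BarFaces} multiplies the adjacent factors $a_ia_{i+1}$ for $1\le i\le p-1$, while the matching coface $d^i$ of \eqref{E:LoopCofaces} is the diagonal duplicating the $i$-th coordinate, whose pullback on cochains is the cup product; under the cross product these correspond exactly. The outer faces $d_0,d_p$ truncate the tensor word, matching the pullbacks along the basepoint insertions in \eqref{E:LoopCofaces}, which remove a factor via the augmentation (evaluation at the basepoint). The degeneracies of \eqref{E:BarDegeneracies} insert the unit and match the codegeneracy pullbacks coming from \eqref{E:LoopCodegeneracies}. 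Naturality of the cross product makes all the resulting squares commute, which is precisely the content invoked from \cite[Proposition~5.9]{BottSegal}; this establishes the asserted quasi-isomorphism of the simplicial DGAs $\Ba_\bullet(\calA)$ and $\Ch^*(X^\bullet)$.

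To pass to the totalizations, I would compare the two normalized double complexes $\Ba^{*,*}(\calA)$ and $\Ch^{*,*}(\Xdot)$ via the Bousfield--Kan spectral sequences they define (Remark~\ref{R:CohomologySS}); passing to the normalized complexes is harmless since the degenerate subcomplex is acyclic. Filtering by $p$ and taking cohomology in the internal direction $\delta_1$, the levelwise quasi-isomorphism induces an isomorphism on $E_1$ pages, and simplicial compatibility identifies the induced $d_1$ differentials, hence all later pages. Both spectral sequences converge, since the normalized complexes are concentrated in the region $p,q\ge 0$ and the $p$-filtration is exhaustive and bounded below in each total degree, so the comparison theorem gives an isomorphism on abutments, i.e. a quasi-isomorphism $\Ba(\calA)\stackrel{\simeq}{\longrightarrow}\Tot\Ch^{*,*}(\Xdot)$. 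Finally, because the cross product is multiplicative and the shuffle product \eqref{E:BarShuffle} is defined by the identical formula in both complexes, this quasi-isomorphism is one of CDGAs.

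The step I expect to be the main obstacle is the compatibility in the second paragraph: checking that the Eilenberg--Zilber/cross product genuinely intertwines the algebraic bar faces (multiplication and augmentation-truncation) with the geometric pullbacks along the cofaces of $X^\bullet$, and controlling this coherently when $\calA\simeq\Ch^*(X)$ is only a zig-zag rather than a single map, so that one obtains a genuine levelwise comparison of simplicial DGAs rather than one valid merely up to unstructured homotopy. A secondary subtlety is the convergence used in the comparison theorem, since each total degree of the totalization is an infinite direct sum over the columns $p$; one must verify that the filtration is exhaustive and bounded below before concluding the isomorphism on abutments.
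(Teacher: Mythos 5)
Your proposal is correct and follows essentially the same route the paper takes: the paper derives this proposition from the K\"unneth theorem (giving levelwise quasi-isomorphisms $\calA^{\otimes p}\to\Ch^*(X^p)$ compatible with the simplicial structure) together with the passage from a quasi-isomorphism of simplicial CDGAs to one of total complexes, which it delegates to \cite[Proposition 5.9]{BottSegal} and which you carry out explicitly via the column-filtration spectral sequence comparison. Your two flagged ``obstacles'' (the zig-zag issue and convergence of the $p$-filtration, which is exhaustive and bounded below) are both handled correctly and pose no real difficulty.
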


In particular, the K\"{u}nneth Theorem gives a quasi-isomorphism
\begin{equation}\label{E:BarQuasiTot}
\Ba(\Ch^*(X))\stackrel{\simeq}{\longrightarrow}\Tot\Ch^{*,*}(X^\bullet).
\end{equation}

There is a natural cochain map \cite[p. 433]{BG:FunctSpaces} 
\begin{equation}\label{E:TotAndCochains}
\rho:\Tot \Ch^{*,*}(X^\bullet) \longrightarrow \Ch^*\Tot (X^\bullet),
\end{equation}
defined as the composition
\begin{equation}\label{E:rho-def}
 C^{p+q}(X^p)\stackrel{\ev^\ast_p}{\longrightarrow} C^{p+q}(\Delta^p\times \Tot (X^\bullet))\stackrel{\theta^\ast}{\longrightarrow} \bigl(C^\ast(\Delta^p)\otimes C^\ast(\Tot (X^\bullet))\bigr)^{p+q}\stackrel{\rho_p}{\longrightarrow} C^q(\Tot (X^\bullet)),
\end{equation}
where $\ev^\ast_p$ is the pullback via the evaluation map \eqref{E:Evaluation}, $\theta^\ast$ is the isomorphism of the K\"{u}nneth 
formula (i.e.~the Eilenberg-Zilber map), and $\rho_p$ is defined as the slant product (see \cite[p. 287]{Spanier:Top})
\[
 \rho_p(x\otimes y):=x(\iota_p) y,\quad \iota_p\in C_p(\Delta^p),
\]
where $\iota_p$ is the canonical $p$--simplex $\iota_p:\Delta^p\longrightarrow \Delta^p$.

The map $\rho$ is not necessarily a quasi-isomorphism, i.e., cochains and totalization do not always commute.  However, there is the following well-known result, one version of which can be found in the work of Bendersky and Gitler \cite[Theorem 5.1]{BG:FunctSpaces}.
In \refS{Chen} we use the differential form analog of $\rho$ to obtain the Chen's Iterated Integral Theorem.

\begin{thm}\label{T:CommuteTotAndCochains}
If $X$ is simply connected, then the map \eqref{E:TotAndCochains} is a quasi-isomorphism, so it induces isomorphisms
$$
\Ho^i(\Ba (\Ch^*(X))
\cong 
\Ho^i(\Tot \Ch^{*,*}(X^\bullet))
 \stackrel{\cong}{\longrightarrow} \Ho^i(\Tot (X^\bullet))\cong \Ho^i(\Omega X)
$$
for all $i\geq 0$.
\end{thm}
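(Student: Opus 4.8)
The plan is to isolate the one nontrivial assertion, that the comparison map $\rho$ of \eqref{E:TotAndCochains} is a quasi-isomorphism; granting this, the displayed chain of isomorphisms follows formally. The left-hand isomorphism $\Ho^i(\Ba(\Ch^*(X))) \cong \Ho^i(\Tot\Ch^{*,*}(X^\bullet))$ is exactly \eqref{E:BarQuasiTot}, supplied by Proposition \ref{P:ModelsInduceQuasiIsos}, and the right-hand isomorphism $\Ho^i(\Tot X^\bullet) \cong \Ho^i(\Omega X)$ is induced by the homeomorphism \eqref{E:Loops=Tot}. So only the middle arrow $\rho_*\colon \Ho^i(\Tot\Ch^{*,*}(X^\bullet)) \to \Ho^i(\Tot X^\bullet)$ needs proof.

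To prove that $\rho$ is a quasi-isomorphism I would compare, through $\rho$, two spectral sequences: the spectral sequence of the double complex $\Ch^{*,*}(X^\bullet)$, whose abutment is the source $\Ho^*(\Tot\Ch^{*,*}(X^\bullet))$, and the spectral sequence of the tower of partial totalizations $\Tot X^\bullet = \lim_s \Tot_s X^\bullet$, whose abutment is the target $\Ho^*(\Ch^*(\Tot X^\bullet)) = \Ho^*(\Tot X^\bullet)$. By Remark \ref{R:CohomologySS} both share the Bousfield--Kan $E_0$-page $\Ch^{*,*}(X^\bullet)$; taking vertical $\delta_1$-cohomology produces an $E_1$-page whose $(-p,q)$ term is the normalized tensor power $\overline{\Ho}^*(X)^{\otimes p}$, with $\delta_2$ then inducing the bar differential. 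The map $\rho$, built in \eqref{E:rho-def} from the evaluation pullback, the Eilenberg--Zilber map, and the slant product, is filtration preserving for the two skeletal/cosimplicial filtrations and realizes the canonical identification of these $E_1$-pages, so it induces an isomorphism of spectral sequences from $E_1$ onward.

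Simple connectivity is what supplies convergence, via a vanishing line. Since $X$ is simply connected, $\overline{\Ho}^0(X) = \overline{\Ho}^1(X) = 0$, so in the normalized complex \eqref{E:NormalizedDoubleComplex} each of the $p$ tensor factors carries cohomological degree at least $2$; hence the $E_1$-term in bidegree $(-p,q)$ vanishes unless $q \geq 2p$, i.e.\ unless the total degree $k = q - p$ satisfies $p \leq k$. Thus in each total degree only the finitely many columns $p = 0, 1, \dots, k$ survive. This half-plane vanishing forces strong convergence of the double-complex spectral sequence, and on the topological side it is the statement that the fibers of the maps $\Tot_s X^\bullet \to \Tot_{s-1} X^\bullet$, which are iterated loop spaces of the codegeneracy-normalized pieces, grow in connectivity (becoming roughly $(s-1)$-connected) as $s \to \infty$, so that the Tot-tower stabilizes degreewise and its spectral sequence converges as well. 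With both spectral sequences strongly convergent and $\rho$ an isomorphism on $E_1$, the comparison theorem lets the $E_\infty$-isomorphism lift to the abutments, giving that $\rho_*$ is an isomorphism and hence that $\rho$ is a quasi-isomorphism.

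I expect the main obstacle to be verifying that $\rho$ genuinely realizes the canonical identification on $E_1$-pages while preserving filtrations: one must match the slant-product/Eilenberg--Zilber formula \eqref{E:rho-def} with the cohomology of the Tot-tower fibers and confirm both the compatibility of the two filtrations and the agreement of the differentials. This is the technical heart of the Bendersky--Gitler comparison \cite[Theorem 5.1]{BG:FunctSpaces} specialized to the cobar construction, and it is exactly here that simple connectivity is indispensable: as noted after \eqref{E:rho-def}, without it $\rho$ need not be a quasi-isomorphism, because the connectivity estimates — and hence the strong convergence of both spectral sequences — fail.
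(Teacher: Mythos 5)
Your outline is correct, but note that the paper does not actually prove \refT{CommuteTotAndCochains}: it explicitly defers to the literature, citing \cite[Theorem 5.1]{BG:FunctSpaces} (and, via Remark \ref{R:MapsModel2}, Anderson \cite{Anderson:HSS}, Bott--Segal \cite{BottSegal}, and Eilenberg--Moore \cite{EM:Cotor}). What you have written is essentially a sketch of the proof of those cited results, specialized to the cobar construction, so relative to the paper you are supplying an argument where the authors supply a reference. The skeleton is sound: the reduction to the middle arrow is right, the identification of the $E_1$-page of the column filtration with the normalized $\overline{\Ho}^*(X)^{\otimes p}$ uses the K\"unneth theorem (available here because the paper takes field coefficients), the vanishing line $q\geq 2p$ from simple connectivity is correctly derived and correctly located at $E_1$ rather than $E_0$, and the connectivity estimate for the layers $\Omega^s N^s X^\bullet$ of the $\Tot$-tower is the standard one. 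Two caveats on where the real work lies, both of which you flag but neither of which you discharge: (i) the verification that $\rho$, defined via $\ev_p^*$, the Eilenberg--Zilber map, and the slant product in \eqref{E:rho-def}, is filtration-preserving and induces the canonical isomorphism on $E_1$ is the technical core of \cite{BG:FunctSpaces} and cannot be waved through; and (ii) on the topological side the issue is not only convergence of the tower spectral sequence but the identification $\Ho^i(\Tot X^\bullet)\cong\Ho^i(\Tot_s X^\bullet)$ for $s$ large in each fixed degree $i$ (equivalently, controlling $\lim^1$), which is exactly what the growing connectivity of the layers buys you. With those two points either carried out or explicitly cited, your argument is a complete and standard proof of the statement the paper leaves to the references.
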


\begin{rem}\label{R:MapsModel2}
In line with Remark \ref{R:MapsModel}, the above theorem is a special case of a more general result that, as long as the dimension of the nerve of $X_\bullet$ is less than or equal to the connectivity of $Y$, the canonical map $\Tot \Ch^{*,*}(\Map(X_\bullet,Y)) \to \Ch^*\Tot \Map(X,Y)$ is a quasi-isomorphism.
This is a classical result; for the case when the dimension of the nerve of $X_\bullet$ is less than the connectivity of $Y$, see  \cite[Theorem 5.1]{BG:FunctSpaces} (which in turn cites papers of Anderson \cite{Anderson:HSS} and Bott and Segal \cite[Proposition 5.3]{BottSegal}), and for the case when the dimensions are the same, see the work of Patras and Thomas \cite[Theorem 2]{PT:Mapping}.   The case of the loop space goes back to Eilenberg and Moore \cite{EM:Cotor}; see also McCleary's book \cite[Chapter 7]{McCleary:UserGuide}.
\end{rem}
\begin{rem}
Notice that we have two kinds of totalization in \refT{CommuteTotAndCochains}.  One is the algebraic totalization of the double complex $\Ch^{*,*}(\Xdot)$, and the other is the geometric totalization of the cosimplicial space $\Xdot$ which produces a space; this is an unfortunate abuse of notation.
\end{rem}

The map \eqref{E:TotAndCochains} can be composed with the pullback quasi-isomorphism induced by the equivalence \eqref{E:Loops=Tot}, namely
\begin{equation}\label{E:HomeoOnCochains}
\Ch^*\Tot (X^\bullet)\stackrel{\simeq}{\longrightarrow} \Ch^*(\Omega X),
\end{equation}
and we can further extend this to the left by \eqref{E:BarQuasiTot}.   
For $X$ simply connected, we thus have a composition of quasi-isomorphisms
\begin{equation}\label{E:QuasiisoComp}
\Ba(\Ch^*(X))\stackrel{\simeq}{\longrightarrow}\Tot \Ch^{*,*}(X^\bullet) \stackrel{\simeq}{\longrightarrow} \Ch^*\Tot (X^\bullet)\stackrel{\simeq}{\longrightarrow} \Ch^*(\Omega X)
\end{equation}
which is then itself a quasi-isomorphism.


\subsection{Formality and the cobar construction}\label{S:CobarFormality}


Here we recall the notion of formality from rational homotopy theory and deduce some observations about how formality interacts with the cobar construction.  This will be used in \refS{CobarBraidCohomology} to give us a complete picture of the cohomology of spaces of braids.

A DGA $\calA$ is said to be \emph{formal} if it is connected via a zig-zag of quasi-isomorphisms to its homology.  This notion can be extended to spaces as follows.

Let $\mathbb K$ be a field of characteristic zero.  For a space $X$, let $A_{PL}^*(X)$ be the CDGA of Sullivan's piecewise polynomial forms (see, for example, \cite[Section 10(c)]{FHT:RHT}).  For $X$ a connected manifold, $A_{PL}^*(X)$ is quasi-isomorphic (as CDGAs) to the usual de Rham algebra $\ChdR^\ast(X)$ of differential forms on $X$; this is the situation we will mostly be interested in.  So $A_{PL}^*(X)$ will now play the role of $\Ch^*(X)$ from \refS{Cosimplicial}.  \refP{ModelsInduceQuasiIsos} ensures that any result involving quasi-isomorphisms with $\Ch^*(X)$ can be replaced by one with $A_{PL}^*(X)$.

Also note that the cohomology $\Ho^*(A_{PL}^*(X,\mathbb K))\cong\Ho^*(X,\mathbb K)$ can be regarded as a CDGA if endowed with the zero differential.  

\begin{defin}\label{D:Formal}
A space $X$ is \emph{$\mathbb K$-formal} if there is a zig-zag of quasi-isomorphisms of CDGAs
\begin{equation}\label{E:Zigzag}
A_{PL}^*(X,\mathbb K)\stackrel{\simeq}{\longleftarrow}\cdots \stackrel{\simeq}{\longrightarrow}\Ho^*(X,\mathbb K).
\end{equation}
\end{defin}

\begin{thm}\label{T:BarFormal}
If $X$ is a $\mathbb K$-formal space, there is a zig-zag of quasi-isomorphisms of 
(graded-commutative) differential graded Hopf algebras
\begin{equation}\label{E:BarFormal}
\Ba(A_{PL}^*(X,\mathbb K))\stackrel{\simeq}{\longleftarrow} \cdots \stackrel{\simeq}{\longrightarrow} \Ba(\mathcal A)\stackrel{\simeq}{\longleftarrow}\cdots\stackrel{\simeq}{\longrightarrow} \Ba(\Ho^*(X,\mathbb K)).
\end{equation}
\end{thm}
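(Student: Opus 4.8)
The plan is to apply the bar-construction functor $\Ba$ termwise to the formality zig-zag of \refD{Formal} and to verify that it carries that zig-zag to a zig-zag of differential graded Hopf algebras in which every map remains a quasi-isomorphism. Accordingly, I would isolate two facts: (i) $\Ba$ is functorial on CDGA maps, and each induced map is a morphism of differential graded Hopf algebras; and (ii) $\Ba$ sends quasi-isomorphisms of (connected) CDGAs to quasi-isomorphisms. Granting these, applying $\Ba$ to the chain $A_{PL}^*(X,\mathbb K)\stackrel{\simeq}{\longleftarrow}\cdots\stackrel{\simeq}{\longrightarrow}\Ho^*(X,\mathbb K)$ produces exactly \eqref{E:BarFormal}, since the covariant functor $\Ba$ preserves the direction of each arrow in the zig-zag.

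For (i), which is routine, given a CDGA map $f\colon\calA\to\calA'$ I would set $\Ba(f)[a_1|\cdots|a_p]=[f(a_1)|\cdots|f(a_p)]$. This descends to the normalized complex because $f$ preserves the unit, hence the images of the degeneracies. It commutes with the vertical differential $\delta_1$ because $f$ is a cochain map, and with the horizontal differential $\delta_2$ because $f$ is multiplicative, so that $f(a_ia_{i+1})=f(a_i)f(a_{i+1})$ makes $\Ba(f)$ commute with each face $d_i$; all the signs $\varepsilon(i)$ depend only on the degrees $|a_j|$, which $f$ preserves. The same degree-preservation shows that $\Ba(f)$ intertwines the deconcatenation coproduct \eqref{E:CoproductOnBar} and, since $f$ is a map of graded-commutative algebras, the shuffle product \eqref{E:BarShuffle}. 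Hence $\Ba(f)$ is a morphism of differential graded Hopf algebras.

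The main obstacle is (ii). Here I would filter the total complex $\Ba(\calA)$ by the tensor length $p$, i.e.\ by the column index $-p$ of the double complex \eqref{E:BarDoubleComplex}. On the associated graded only the internal differential $\delta_1$ survives, so the $E_1$-term is
\[
E_1^{-p,\ast}=\Ho^\ast\!\big(\overline\calA^{\otimes p},\,\delta_1\big)\;\cong\;\big(\Ho^\ast(\overline\calA)\big)^{\otimes p},
\]
where $\overline\calA$ is the augmentation ideal and the isomorphism is the K\"unneth theorem over the field $\mathbb K$. A quasi-isomorphism $f$ restricts to a quasi-isomorphism $\overline f\colon\overline\calA\to\overline{\calA'}$, so $\Ba(f)$ induces $\big(\Ho^\ast(\overline f)\big)^{\otimes p}$, an isomorphism, on every $E_1$-term; by the comparison theorem for spectral sequences $\Ba(f)$ is then a quasi-isomorphism on totalizations, \emph{provided} both spectral sequences converge. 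Convergence is exactly where a connectivity hypothesis is needed: in all cases we apply this, $X$ is simply connected (e.g.\ $X=\Conf(m,\R^n)$ for $n\geq 3$), so $\Ho^\ast(\overline\calA)\cong\overline{\Ho^\ast(X)}$ is concentrated in degrees $\geq 2$; then in each total degree $k=q-p$ only the finitely many columns with $p\leq k$ contribute, the filtration is bounded, and convergence is immediate. (For merely connected $X$ one instead works with the complete filtration.) I would note that this K\"unneth argument is the same one underlying \refP{ModelsInduceQuasiIsos}.

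Finally, I would assemble the conclusion: applying $\Ba$ to each map of the formality zig-zag yields, by (i), morphisms of differential graded Hopf algebras, and, by (ii), each is a quasi-isomorphism; since $\Ba$ is covariant the shape of the zig-zag is preserved, giving precisely \eqref{E:BarFormal}. The only genuinely delicate point is the convergence of the filtration spectral sequence in (ii), which I would dispatch using the simple connectivity present in every application.
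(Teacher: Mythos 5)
Your proposal is correct and takes essentially the same route as the paper: apply $\Ba$ termwise to the formality zig-zag, use the K\"unneth theorem over the field $\mathbb K$ to see that tensor powers of a quasi-isomorphism are quasi-isomorphisms compatible with all faces and degeneracies, and then pass from a levelwise quasi-isomorphism of simplicial CDGAs to one of totalizations. The only difference is that where the paper outsources this last step to Bott--Segal (as in \refP{ModelsInduceQuasiIsos}), you prove it directly with the column-filtration spectral sequence; note that since $p\geq 0$ this filtration is bounded below as well as exhaustive, so the comparison theorem applies and convergence holds even without the simple-connectivity you invoke at the end.
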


\begin{proof}
This follows immediately from the fact that $B(-)$ is functorial and takes quasi-isomorphisms of CDGAs to quasi-isomorphisms of Hopf algebras, as noted at the end of Section \ref{S:BarConstruction}.
\end{proof}

A version of the above theorem for the case of braids will appear later as \refT{BraidsFormality}.

%
%

If $X$ is simply connected, then \eqref{E:QuasiisoComp} says that $\Ba(A_{PL}^*(X,\mathbb K))$ is additionally quasi-isomorphic to $A_{PL}^*(\Omega X, \mathbb K)$ so that the zig-zag \eqref{E:BarFormal}
 can be extended to the left.  
Unravelling the construction of $\Ba(A_{PL}^*(X,\mathbb K))$ then immediately gives

\begin{cor}\label{C:CDGAModelForOmegaX}
If $X$ is $\mathbb K$-formal and simply connected, then the cohomology of $\Omega X$ over any field of characteristic zero is the cohomology of the CDGA
\begin{equation}\label{E:CDGAModel}
\Big(      \bigoplus\limits_{p=0}^{\infty}   s^{-p} (\Ho^*(X))^{\otimes p}/\Big(\sum_{i=0}^{p-1} \operatorname{im}((s^i)^*)\Big), \,   
              \sum_{i=0}^{p+1}(-1)^i (d^i)^*
 \Big),
 \end{equation}
 where $s^{-p}$ denotes the degree shift.
\end{cor}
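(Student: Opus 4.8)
The plan is to assemble the corollary directly from the two principal results already in hand, namely the formality statement \refT{BarFormal} and the chain of quasi-isomorphisms \eqref{E:QuasiisoComp} that is available once $X$ is simply connected. First I would apply \refT{BarFormal}: for a $\mathbb K$-formal $X$ it furnishes a zig-zag of quasi-isomorphisms of differential graded Hopf algebras $\Ba(A_{PL}^*(X,\mathbb K)) \simeq \cdots \simeq \Ba(\Ho^*(X,\mathbb K))$, and in particular an isomorphism $\Ho^*(\Ba(A_{PL}^*(X,\mathbb K))) \cong \Ho^*(\Ba(\Ho^*(X,\mathbb K)))$. This step uses nothing beyond formality and the passage of quasi-isomorphisms through the bar construction, both of which are already recorded.

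Next I would bring in simple connectivity. By \eqref{E:QuasiisoComp} — applied with $A_{PL}^*$ in place of $\Ch^*$, which is legitimate by \refP{ModelsInduceQuasiIsos} since $A_{PL}^*(X) \simeq \ChdR^\ast(X)$ — the bar complex $\Ba(A_{PL}^*(X,\mathbb K))$ is quasi-isomorphic to $A_{PL}^*(\Omega X,\mathbb K)$ and therefore computes $\Ho^*(\Omega X)$. Composing with the previous paragraph gives, over any field of characteristic zero, the identification $\Ho^*(\Omega X) \cong \Ho^*(\Ba(\Ho^*(X,\mathbb K)))$. At this point the entire content reduces to writing out the right-hand side explicitly.

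It remains to unravel $\Ba(\Ho^*(X))$. By the definition in \refS{BarConstruction}, this is the totalization of the normalized double complex whose $(-p,q)$ term is $(\Ho^*(X)^{\otimes p})^q/\bigl(\sum_{i=0}^{p-1}\operatorname{im}(s_i)\bigr)$, as in \eqref{E:BarNormalizedDoubleComplex}; as a graded vector space this is precisely $\bigoplus_{p\geq 0} s^{-p}(\Ho^*(X))^{\otimes p}$ modulo the degeneracies, matching the underlying object of \eqref{E:CDGAModel}. The one point requiring care is the differential. Since $\Ho^*(X)$ is endowed with the zero differential, the vertical differential $\delta_1$ of \eqref{E:VertDiffBar} (equivalently \eqref{E:Diff1Cobar}, where it reads $(-1)^p d$) vanishes identically, so the total differential collapses to the single horizontal sum $\delta_2 = \sum_{i=0}^{p+1}(-1)^i (d^i)^*$ of \eqref{E:Diff2Cobar} appearing in the statement. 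To finish I would check that the bar face maps of \eqref{E:BarFaces} correspond, under the identification of \refP{ModelsInduceQuasiIsos}, to the duals $(d^i)^*$ of the cobar cofaces of \refS{Cosimplicial}, and that the Koszul signs $(-1)^{\varepsilon(i)}$ of \eqref{E:HorDiffBar} reduce to the stated $(-1)^i$. This sign and indexing bookkeeping is the only mildly delicate part; every other ingredient is read off from \refT{BarFormal} and \eqref{E:QuasiisoComp}, which is why the corollary follows essentially at once.
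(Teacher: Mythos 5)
Your proposal is correct and follows the paper's own argument essentially verbatim: extend the formality zig-zag of \refT{BarFormal} to the left by the simply connected quasi-isomorphism chain \eqref{E:QuasiisoComp}, then unravel the normalized bar complex on $\Ho^*(X)$ with its zero internal differential so that only the horizontal (face-map) differential survives. The sign/indexing bookkeeping you flag at the end is the only point the paper leaves implicit, and your treatment of it is consistent with \eqref{E:Diff2Cobar}.
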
 
%
%
%

To see how \refC{CDGAModelForOmegaX} applies to the case of braids, see \refC{CDGAModelForBraids}.

\begin{rem}
Similar arguments can be applied when $X$ is \emph{coformal}, namely when there is there is a sequence of quasi-isomorphisms of differential
graded Lie algebras
\begin{equation}
\xymatrix{ \mathcal{L}\big( A_{PL}^*(X)\big) &  \cdots
\ar[l]_(0.3){\simeq} \ar[r]^(0.3){\simeq} & \big( \pi_{*}(\Omega
X)\otimes\Q,\ 0  \big). }
\end{equation}
Here $\mathcal{L}$ is a functor that takes a DGA $\mathcal A$ and produces
a DGL which is roughly the free Lie algebra on the dual of $\mathcal A$
with appropriate differential.
\end{rem}


\section{The cobar construction on configuration spaces and the cohomology of braids}\label{S:CobarBraidCohomology}


We now use the machinery of \refS{CobarCohomology} in the special case of $X=\Conf(m,\R^n)$, the configuration space of $m$ points in $\R^n$.  Then $\Omega\Conf(m,\R^n)$ is precisely the space of (pure) braids. 
The reason we can get more mileage in this case is that $\Conf(m,\R^n)$ is a formal space with two explicit formality quasi-isomorphisms
\begin{equation}\label{E:ConfFormalityQuasiIsos}
\ChdR^\ast(\Conf(m,\R^{n}))\stackrel{I}{\longleftarrow} \D(m) \stackrel{\overline{I}}{\longrightarrow} \Ho^*(\Conf(m,\R^{n})).
\end{equation}
Here $\D(m)$ is a certain complex of diagrams which we will review in \refS{Diagrams}.
The map $I$, the harder of the two quasi-isomorphisms, is given by integration maps whose combinatorics are kept track of by $\D(m)$.  This integration will be reviewed in Section \ref{S:Integral} but we will omit the details since they are spelled out carefully in \cite[Chapters 6 and 9]{LV}.  It is this diagram complex and the integration map that provide the bridge to the theory of Bott-Taubes configuration space integrals and Chen integrals, as will be further explored in Sections \ref{S:Chen} and \ref{S:B-TRelation}.

In \refS{CobarFormalityBraids}, we will combine \eqref{E:ConfFormalityQuasiIsos} with the cobar construction $\Conf(m,\R^n)^\bullet$ using the setup of \refS{CobarFormality} and this will subsequently provide information about the cohomology of braids $\Omega\Conf(m,\R^n)$.

Since integration will be $\R$-valued, from now on we will assume that our coefficient ring is $\R$ but will suppress it from the notation.
In addition, cochains $\Ch^*$ and polynomial forms $A_{PL}^*$ will be replaced by the CDGA of de Rham cochains $\ChdR^\ast$. 


\subsection{The CDGA of admissible diagrams}\label{S:Diagrams}


Here we review the CDGA of \emph{admissible diagrams} $\D(m)$.  These originally appear in \cite[Section 3.3.3]{K:OMDQ} and are discussed in detail in \cite[Chapter 6]{LV}.

\begin{defin}
\label{D:Diagrams}
Fix $n\geq 2$ and $m\geq 1$.  A \emph{diagram $\Gamma$ (in $\R^n$ on $m$ vertices)} is given by sets $V(\Gamma)$ and $E(\Gamma)$:
\begin{itemize}
\item
$V(\Gamma)$ is the set of vertices, partitioned into $m$ \emph{segment vertices} $V_{\mathrm{seg}}(\Gamma)$ with distinct labels from $\{1,...,m\}$, and any number of \emph{free vertices} $V_{\mathrm{free}}(\Gamma)$.  Thus $$V(\Gamma) = V_{\mathrm{seg}}(\Gamma) \sqcup V_{\mathrm{free}}(\Gamma).$$
\item
$E(\Gamma)$ is the set of edges joining vertices of $\Gamma$.  An edge between two segment vertices is called a \emph{chord}.
\end{itemize}
Furthermore, $\Gamma$ satisfies the following conditions:
\begin{itemize}
\item
Each free vertex of $\Gamma$ has valence at least 3 and is joined by a path of edges to some segment vertex.
\item
There are no self-loops in $\Gamma$, i.e.~no edge has a single vertex as both its endpoints.
\end{itemize}
\end{defin}

In \cite{LV}, segment vertices are called \emph{external vertices}, and free vertices are called \emph{internal vertices}.  Six examples of diagrams in $\D(3)$ are shown below.  
A segment vertex is represented by either a dot with a small horizontal line segment through it (if it has at least one incident edge) or just a small horizontal line segment (otherwise).
\[
\includegraphics[scale=0.2, angle=-90]{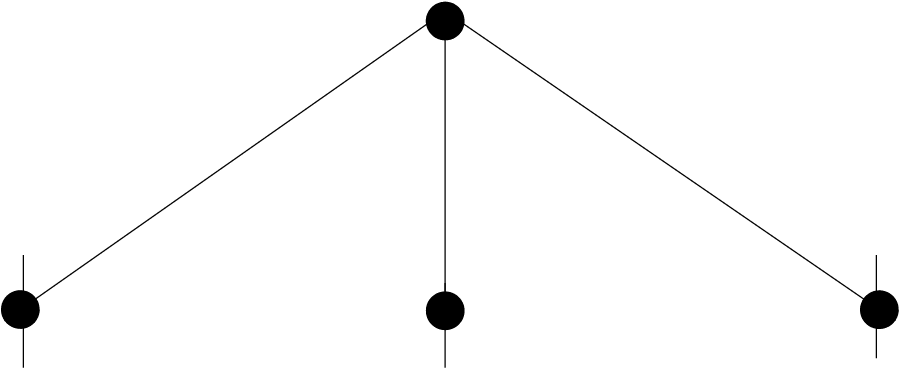}
\qquad \quad
\includegraphics[scale=0.18, angle=-90]{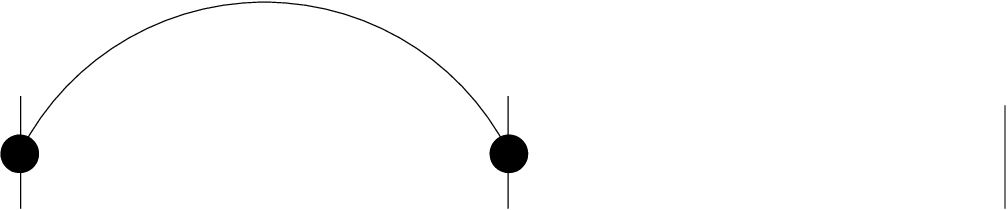}
\qquad \quad
\includegraphics[scale=0.18, angle=-90]{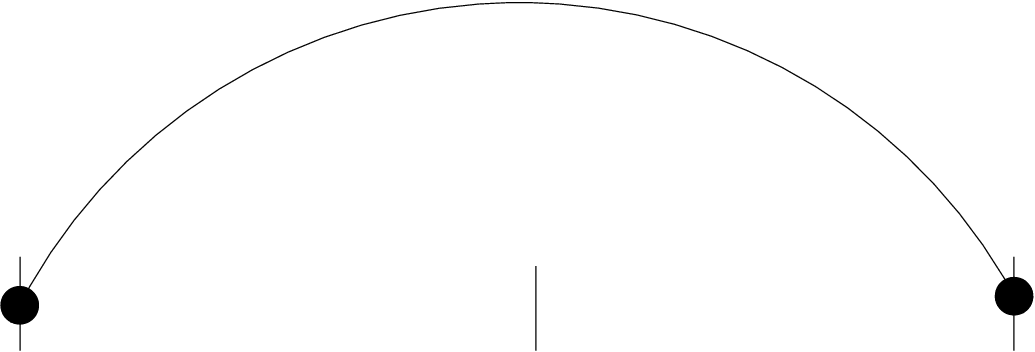}
\qquad \quad
\includegraphics[scale=0.18, angle=-90]{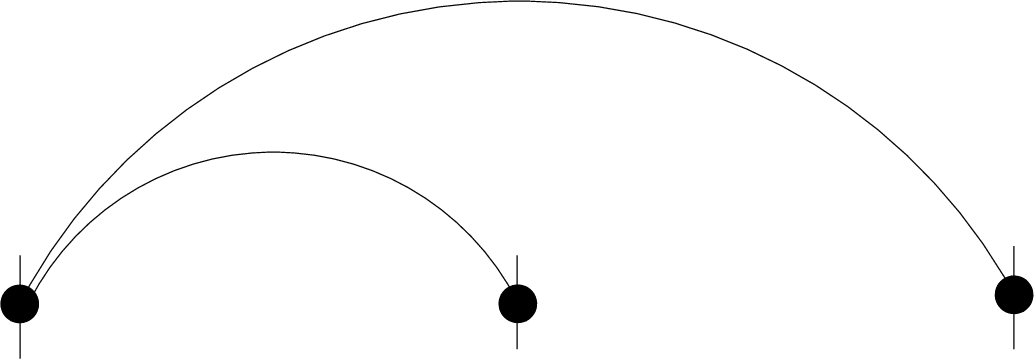}
\qquad \quad
\includegraphics[scale=0.18, angle=-90]{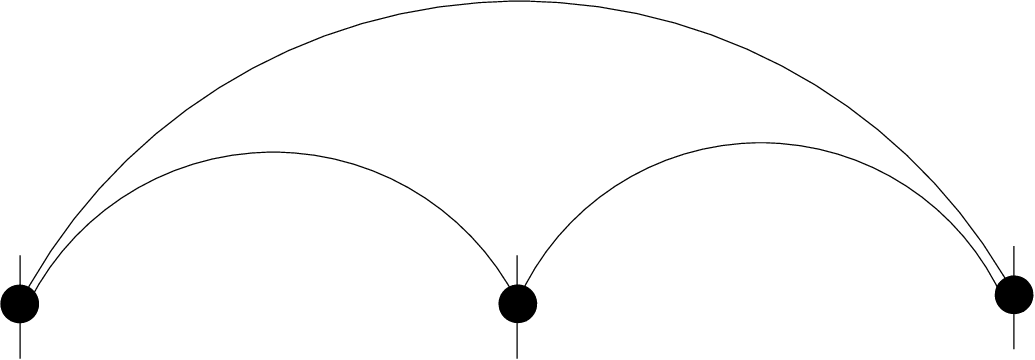}
\qquad \quad
\includegraphics[scale=0.2, angle=-90]{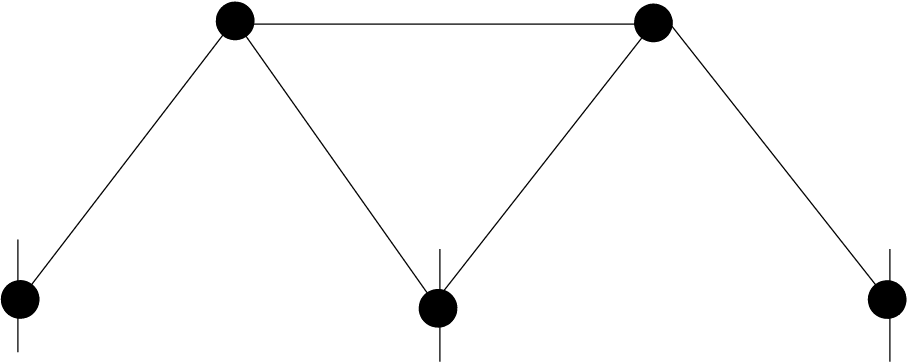}
\]
When we later picture elements of the bar construction on $\D(m)$, it will be convenient that we have aligned the segment vertices vertically rather than horizontally, as in \cite{LV}.  

\begin{defin}
\label{D:DmOrientations}
An \emph{orientation} of a diagram $\Gamma$ consists of the following data:
\begin{itemize}
\item
for odd $n$, a labeling of $V_{\mathrm{free}}(\Gamma)$ by integers $\{m+1,...\}$ and an orientation of each edge;
\item 
for even $n$, an ordering of the set of edges.
\end{itemize}
We call a diagram $\Gamma$ together with an orientation an \emph{oriented diagram}.
\end{defin}

We then let $\D(m)$ be the $\R$-vector space of oriented diagrams modulo the equivalence generated by the following relations:
\begin{itemize}
\item
$\Gamma \sim 0$ if $\Gamma$ has more than one edge between any two vertices;
\item
for odd $n$, $\Gamma \sim - \Gamma'$ if $\Gamma$ and  $\Gamma'$ differ by a transposition of two free vertex labels;
\item
for odd $n$, $\Gamma \sim -\Gamma'$ if $\Gamma$ and $\Gamma'$ differ by an orientation-reversal of an edge;
\item
for even $n$, $\Gamma \sim - \Gamma'$ if $\Gamma$ and  $\Gamma'$ differ by a transposition of two edge labels.
\end{itemize}

We view  $\D(m)$ as a graded vector space where the \emph{degree} of a diagram $\Gamma$ is
\[
|\Gamma| = (n-1)|E(\Gamma)| - n |V_{\mathrm{free}}(\Gamma)|.
\]
We will consider the bar complex on $\D(m)$ in Section \ref{S:CobarFormalityBraids}, in which case we may think of this as the internal degree.

There is a differential $d$ on $\D(m)$ given by 
\[
d\Gamma  = \sum \varepsilon(e) \Gamma/e
\]
where the sum is taken over non-chord edges $e$, $\varepsilon(e)$ is a sign depending on $e$, and $\Gamma/e$ is the result of contracting $e$ to a point.  This differential $d$ makes $\D(m)$ into a cochain complex.  For details, see \cite[Section 6.4]{LV}.

There is also a product of diagrams 
\begin{align*}
\D(m) \otimes \D(m) &\longrightarrow \D(m)\\
(\Gamma_1, \Gamma_2) & \longmapsto \Gamma_1 \cdot \Gamma_2
\end{align*}  
given by the juxtaposition, or superposition, of the two diagrams along segment vertices in the following sense: $\Gamma_1 \cdot \Gamma_2$ has $|V_{\mathrm{free}}(\Gamma_1 \cdot \Gamma_2)| = |V_{\mathrm{free}}(\Gamma_1)| + |V_{\mathrm{free}}(\Gamma_2)|$ and 
$|E(\Gamma_1 \cdot \Gamma_2)| = |E(\Gamma_1)| + |E(\Gamma_2)|$, but only $m$ segment vertices.  
The orientation is given by appropriately raising the labels of vertices or edges from $\Gamma_2$.
The degree of a product is the sum of degrees, and the product is graded-commutative, so that $\D(m)$ is a CDGA.  More details can be found in \cite[Sections 6.3 and 6.5]{LV}.

\begin{rem}
In \cite{LV}, a slightly broader class of diagrams was considered; from the definition of the differential, one can see that our $\D(m)$  is isomorphic to the complex of admissible diagrams in that reference.
\end{rem}


\subsection{Formality maps for configuration spaces}\label{S:Integral}


We first describe the harder of the two quasi-isomorphisms in \eqref{E:ConfFormalityQuasiIsos} that yield the formality of configuration spaces, namely the map
$$
I\colon \D(m) \longrightarrow \ChdR^\ast(\Conf(m, \R^n)).
$$
\begin{rem}\label{R:Compactification}
The configuration space $\Conf(m, \R^n)$ should at this point be replaced by its Axelrod--Singer (or Fulton--MacPherson) compactification.  This is a space that has the same homotopy type as $\Conf(m, \R^n)$ and is better to work with because of the issue of the convergence of the integral $I$.  Since we will not need the details of this construction, it will do no harm to abuse notation and continue using $\Conf(m, \R^n)$.  For further details, see for example \cite{FM}, \cite{AS} \cite{S:Compact}, and \cite[Section 4.1]{KMV:FTHoLinks}. 

In addition, the cochain complex $\ChdR^\ast$ used here is that of \emph{PA forms}, which are a variant of differential forms for manifolds with corners such as the compactified configuration space (see \cite[Section 8]{KoSo}, \cite{HLTV}).  We will also not need the details of this and will continue to think of the usual de Rham cochains. 
\end{rem}
Given a diagram $\Gamma\in\D(m)$ with $v$ internal vertices and $e$ edges, there are maps
\begin{equation}\label{E:I}
\phi_{ij} \colon \Conf(m+v, \R^n)\longrightarrow (S^{n-1})^{E(\Gamma)}
\end{equation}
given by 
$$
\phi_{ij}\colon (x_1, x_2, ..., x_{m+v})\longmapsto \frac{x_j-x_i}{\vert  x_j-x_i\vert}.
$$
Each such map can be used to pull back the product of the rotation-invariant unit volume form $\nu$ on the sphere.  Denote this pullback by $\alpha_{ij} := \phi_{ij}^*(\nu)$, and let
$$
\alpha_\Gamma = \bigwedge_{\text{edges $(i,j)$ of $\Gamma$}}\alpha_{ij}.
$$
Now let
$$
\pi\colon \Conf(m+v, \R^n)\longrightarrow \Conf(m, \R^n)
$$
be the projection onto the first $m$ configuration points.
Then $I$ is the linear map
$$
I\colon \D (m) \longrightarrow \ChdR^*(\Conf(m, \R^n))
$$
whose value on $\Gamma$ is
\begin{equation}\label{E:FormalityIntegral}
I(\Gamma)=\pi_{*} (\alpha_\Gamma)\in \ChdR^*(\Conf(m, \R^n)).
\end{equation}
Here $\pi_{*}$ denotes the pushforward, or integration along the fiber, of the projection $\pi$.  Thus for $(x_1,...,x_m)\in \Conf(m, \R^n)$, 
$$
I(\Gamma)(x_1,...,x_m)=\int_{\pi^{-1}(x_1,...,x_m)} \alpha_\Gamma.
$$
The degree of the form obtained this way is precisely the degree of $\Gamma$, as defined just after \refD{DmOrientations}.
One then shows that the form $I(\Gamma)$ is closed using Stokes' Theorem. This amounts to checking that the restrictions of integrals to the codimension one boundary components of the compactified configuration space vanish.  For details, see \cite[Chapter 9]{LV}.

The construction of the other quasi-isomorphism 
$$
\overline{I}\colon \D(m)\longrightarrow \Ho^*(\Conf(m,\R^{n})),
$$
relies on the ring structure of the cohomology of $\Conf(m,\R^n)$, $n\geq 2$, as computed in \cite{Cohen:Homology}.  
Namely, $\Ho^*(\Conf(m,\R^n))$ is generated by products $\alpha_{ij}$, each of which is the pullback of the unit volume form $\nu$ from $S^{n-1}$ by the direction map $\phi_{ij}$ defined in \eqref{E:I}.  The relations are
\begin{gather}
\alpha_{ij}^{2}=0,\label{E:CohomCond2} \\
\alpha_{ij}=(-1)^{n}\alpha_{ji}, \label{E:CohomCond3} \\
\alpha_{ij}\alpha_{kl}=(-1)^{n-1}\alpha_{kl}\alpha_{ij}, \label{E:CohomCond4} \\
\alpha_{ij}\alpha_{jk}+\alpha_{jk}\alpha_{ki}+\alpha_{ki}\alpha_{ij}=0.
\label{E:CohomCond5}
\end{gather}
The last relation is called the \emph{three-term (3T) relation}.  
The Poincar\'{e} polynomial for $\Conf(m,\R^n)$ is
\begin{equation}\label{E:PoincarePoly}
P(t)=(1+t^{n-1})(1+2t^{n-1})\cdots(1+(m-1)t^{n-1}).
\end{equation}
Via relation \eqref{E:CohomCond3}, we can take as generators $\alpha_{ij}$ with $i<j$, and use the convention that if $i<j$, then $\alpha_{ji} :=(-1)^n\alpha_{ij}$.  This leaves us with only the relations \eqref{E:CohomCond2}, \eqref{E:CohomCond4}, and \eqref{E:CohomCond5}, all of which are quadratic, which will be useful in \refP{NoCohomAboveVanishLine}.

The map $\overline{I}$ is then simply the unique algebra map which sends to $\alpha_{ij}$ the diagram with a single chord between segment vertices $i$ and $j$, and which sends any diagram with free vertices to 0.  

For all the details of why $I$ and $\overline I$ are quasi-isomorphisms, see \cite[Sections 8 and 9]{LV}.

Putting this together, we have the following result which exhibits the formality of configuration spaces.

\begin{thm}[Kontsevich \cite{K:OMDQ}]
\label{T:ConfigurationFormality}
For $n\geq 2$ and $m\geq 1$, there are CDGA quasi-isomorphisms
\begin{equation}
\label{E:IntegrationZigZag}
\ChdR^\ast(\Conf(m,\R^{n}))\stackrel{I}{\longleftarrow} \D(m) \stackrel{\overline{I}}{\longrightarrow} \Ho^*(\Conf(m,\R^{n})).
\end{equation}
\end{thm}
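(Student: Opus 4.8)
The plan is to treat the two maps separately, handling the combinatorial map $\overline{I}$ first and then using it to leverage the analytic statement about $I$. First I would verify that $\overline{I}$ is a well-defined CDGA quasi-isomorphism. Well-definedness amounts to checking that $\overline{I}$ respects the defining relations of $\D(m)$, is multiplicative (juxtaposition of chord diagrams along segment vertices corresponds to the product of the classes $\alpha_{ij}$), and is a chain map: since $\Ho^*(\Conf(m,\R^{n}))$ carries the zero differential, one must check $\overline{I}(d\Gamma)=0$. The only terms of $d\Gamma = \sum \varepsilon(e)\,\Gamma/e$ surviving $\overline{I}$ are those whose contraction yields a chord diagram, and the model case is the tripod, whose three edge contractions produce exactly the left-hand side of the three-term relation \eqref{E:CohomCond5}, which vanishes in cohomology.

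That $\overline{I}$ induces an isomorphism on cohomology is the statement that $\Ho(\D(m))$ is generated by the chord-diagram classes $[\Gamma_{ij}]$ subject to precisely the relations \eqref{E:CohomCond2}--\eqref{E:CohomCond5}, matching Cohen's presentation of $\Ho^*(\Conf(m,\R^{n}))$. I would prove this by filtering $\D(m)$ by the number of free vertices: the differential lowers this number, so chord diagrams sit at the bottom of the filtration as cocycles, and a direct analysis, matched against the Poincar\'e polynomial \eqref{E:PoincarePoly}, shows that no cohomology survives among diagrams retaining free vertices beyond the $3$T relation produced by tripods. This simultaneously records that $\Ho(\D(m))$ is generated as an algebra by the chord-diagram classes, a fact needed at the end.

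Next I would establish that $I$ is a well-defined CDGA map. Well-definedness requires passing to the Axelrod--Singer compactification so that the fiber integral $\pi_{*}(\alpha_\Gamma)$ of \eqref{E:FormalityIntegral} converges, the integrand being a product of pullbacks of the volume form that extends to the corners. Multiplicativity follows because superposition of diagrams corresponds to the wedge product of the associated forms, compatibly with pushforward. The heart of this step is showing $I\circ d = d\circ I$: applying Stokes' theorem to the fiber integral, the boundary decomposes into codimension-one faces, where the principal faces (two configuration points colliding) reproduce edge contractions and yield $d\Gamma$, while the faces with points escaping to infinity and the ``hidden'' faces (three or more points colliding) must integrate to zero. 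Finally I would deduce that $I$ is a quasi-isomorphism by comparison with $\overline{I}$: identifying the de Rham cohomology of $\ChdR^\ast(\Conf(m,\R^{n}))$ with $\Ho^*(\Conf(m,\R^{n}))$, both $\Ho(I)$ and $\overline{I}$ are algebra maps out of $\Ho(\D(m))$ agreeing on the generators, since for a single chord there are no free vertices and $I(\Gamma_{ij})=\alpha_{ij}$, whose class equals $\overline{I}(\Gamma_{ij})$. As $\Ho(\D(m))$ is generated by these classes, $\Ho(I)=\overline{I}$ is an isomorphism.

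The main obstacle is the chain-map property of $I$, namely the vanishing of the hidden-face and infinite-face boundary integrals. This combination of degree-counting and symmetry arguments constitutes Kontsevich's vanishing lemmas and is the genuine technical core of the formality theorem; it is precisely the part whose full details I would defer to \cite{LV}.
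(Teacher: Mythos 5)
Your outline is correct and matches what the paper does: the theorem is stated as Kontsevich's result, the paper reviews the constructions of $I$ and $\overline{I}$ exactly as you describe (including the Stokes'-theorem argument for closedness of $I(\Gamma)$ and the tripod producing the three-term relation \eqref{E:CohomCond5}), and it defers all substantive verifications to \cite[Sections 8 and 9]{LV}, so your comparison argument identifying $\Ho(I)$ with $\overline{I}$ on the chord generators is a clean and legitimate way to reduce the second quasi-isomorphism to the first. The one caveat is that proving $\overline{I}$ is a quasi-isomorphism --- i.e.\ computing $\Ho(\D(m))$ --- is itself a substantial portion of \cite{LV} and not merely a filtration-by-free-vertices check against the Poincar\'e polynomial \eqref{E:PoincarePoly}, so it belongs alongside the hidden-face vanishing lemmas among the deferred technical cores rather than being dispatched by ``a direct analysis.''
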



\subsection{Cohomology of spaces of braids in $\R^{\geq 4}$}\label{S:CobarFormalityBraids}


We can now extend the formality result to the cobar construction on $\Conf(m,\R^n)$ and thus obtain the following version of \refT{BarFormal}. 

\begin{thm}\label{T:BraidsFormality} 
For $n\geq 2$ and $m\geq 1$, there is a zig-zag of quasi-isomorphisms
\begin{equation}
\label{E:BarZigzag}
\Ba(\ChdR^{\ast}(\Conf(m,\R^n)))\stackrel{B(I)}{\longleftarrow}
\Ba(\D(m)) \stackrel{B(\overline I)}{\longrightarrow}
\Ba(\Ho^{\ast}(\Conf(m,\R^n))).
\end{equation}

\end{thm}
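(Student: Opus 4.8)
The statement is the braid-specific instance of the general \refT{BarFormal}, specialized to the explicit formality zig-zag of \refT{ConfigurationFormality}, which is short: it has the single intermediate CDGA $\D(m)$, with the integration map $I\colon \D(m)\to\ChdR^\ast(\Conf(m,\R^n))$ on one side and the algebra map $\overline I\colon\D(m)\to\Ho^\ast(\Conf(m,\R^n))$ on the other. The plan is therefore to apply the bar-construction functor directly to these two quasi-isomorphisms, running the argument used to prove \refT{BarFormal} with $\calA=\D(m)$. The point that makes everything go through is that $I$ and $\overline I$ are maps of \emph{CDGAs}, not merely of cochain complexes, so all multiplicative structure is respected at every stage. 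Two preliminary substitutions are specific to this setting: first, \refT{BarFormal} is phrased with $A_{PL}^\ast$, whereas here we use de Rham (PA) cochains on the compactified configuration space, a replacement justified by \refP{ModelsInduceQuasiIsos} together with Remark~\ref{R:Compactification}; second, the required $\R$-formality of $\Conf(m,\R^n)$ is exactly Kontsevich's \refT{ConfigurationFormality}.

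Concretely, I would first take $p$-fold tensor powers: since $I$ and $\overline I$ are quasi-isomorphisms and we work over the field $\R$, the K\"{u}nneth theorem shows that $I^{\otimes p}$ and $\overline I^{\otimes p}$ are again quasi-isomorphisms for every $p\geq 0$. Because these maps act slot-wise, they commute with the bar face and degeneracy operators of \eqref{E:BarFaces} and \eqref{E:BarDegeneracies}, so they assemble into quasi-isomorphisms of simplicial CDGAs $\Ba_\bullet(I)$ and $\Ba_\bullet(\overline I)$. Passing to the normalized double complexes \eqref{E:BarNormalizedDoubleComplex} and then totalizing produces the maps $B(I)$ and $B(\overline I)$ of the statement. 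That a level-wise quasi-isomorphism of simplicial CDGAs induces a quasi-isomorphism on the associated totalizations is precisely the fact invoked in the proofs of \refP{ModelsInduceQuasiIsos} and \refT{BarFormal} (via \cite[Proposition 5.9]{BottSegal}). Finally, both maps preserve the shuffle product \eqref{E:BarShuffle} and the deconcatenation coproduct \eqref{E:CoproductOnBar}, so they are maps of differential graded Hopf algebras, as claimed.

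The one step that deserves genuine care is this last passage from level-wise to totalized quasi-isomorphism, i.e.\ the convergence of the spectral sequence associated to the bar-length filtration. For $n\geq 3$ it is automatic: every edge contributes internal degree $n-1\geq 2$, so a nonzero normalized element of $\D(m)^{\otimes p}$ (and likewise of the cohomology, whose generators $\alpha_{ij}$ have degree $n-1$) has internal degree $q\geq p(n-1)$, whence the total degree satisfies $q-p\geq p(n-2)$ and bounds $p$ in each fixed total degree; the filtration is then finite there and the comparison of $E_1$-pages forces $B(I)$ and $B(\overline I)$ to be quasi-isomorphisms. For $n=2$, however, $\Conf(m,\R^2)$ is only a $K(\pi,1)$ and the generators sit in degree one, so infinitely many bar lengths contribute in each total degree; here the finiteness argument fails and one must instead invoke the complete-convergence form of the comparison theorem for the exhaustive, Hausdorff bar filtration. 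This is the only place where the classical case genuinely differs from the higher-dimensional one, and I expect it to be the main obstacle to a uniform argument.
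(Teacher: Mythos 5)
Your construction of $B(I)$ and $B(\overline I)$ is exactly the paper's: \refT{BraidsFormality} is stated without a separate proof precisely because it is the specialization of \refT{BarFormal} to the explicit two-map zig-zag of \refT{ConfigurationFormality}, and your steps — K\"unneth slot-wise to get quasi-isomorphisms of tensor powers over $\R$, compatibility with the faces and degeneracies since these act factor-by-factor, passage to normalized double complexes and their totalizations, and preservation of the shuffle product and deconcatenation coproduct — are the same ones used there, with the same appeal to \refP{ModelsInduceQuasiIsos} and \cite[Proposition 5.9]{BottSegal} for the level-wise-to-total step.

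Where you diverge is in your analysis of that last step, and both halves of your dichotomy are off. For $n\geq 3$, the vanishing line $q\geq p(n-1)$ is a property of $\Ba(\Ho^{*}(\Conf(m,\R^n)))$ only (this is \refP{VanishingLine}); it fails for $\Ba(\ChdR^{\ast}(\Conf(m,\R^n)))$, because $\ChdR^{0}$ and $\ChdR^{1}$ are huge and normalization removes only tensors containing the unit, and it is not automatic for $\Ba(\D(m))$ either, since a diagram consisting entirely of free vertices has degree $(n-1)|E(\Gamma)|-n|V_{\mathrm{free}}(\Gamma)|$, which can be as low as $0$ when $n=3$. So "finitely many bar lengths per total degree" is unavailable for two of the three complexes in \eqref{E:BarZigzag} even in the range you call automatic. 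Conversely, no complete-convergence argument is needed at $n=2$: the totalization here is a \emph{direct sum} over $p$, and since $\delta_1$ preserves $p$ while $\delta_2$ lowers it, the subspaces with $p\leq N$ form an increasing, bounded-below, exhaustive filtration by subcomplexes whose associated graded is a single column with differential $\delta_1$. The classical convergence and comparison theorems (or, equivalently, writing $\Ba(\calA)$ as the filtered colimit of these bounded subcomplexes, inducting on $N$, and using that homology commutes with filtered colimits) then give the quasi-isomorphism uniformly for all $n\geq 2$. The genuine role of $n\geq 3$ (simple connectivity of $\Conf(m,\R^n)$) is reserved for the later comparison with $\Ch^{*}(\Omega\Conf(m,\R^n))$ in \refT{CommuteTotAndCochains}, not for the zig-zag \eqref{E:BarZigzag} itself.
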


%
%



Each of the maps in \eqref{E:BarZigzag} will have certain consequences. The first is the following:
\begin{thm}\label{T:DiagramsBraidCohomology}
For $n\geq 3$ and $m\geq 1$, there exists a quasi-isomorphism (over $\R$)
$$
\varint_{\mathrm{Chen}} \circ \varint_{\mathrm{formality}}\colon\Ba(\D(m)) \stackrel{\simeq}{\longrightarrow} \Ch^{\ast}(\Omega\Conf(m,\R^n)).
$$
Thus for all $i\geq 0$, there are real 
isomorphisms
\begin{equation}\label{E:TotBraidsCohomIso}
\Ho^i(\Ba(\D(m)))\cong\Ho^i(\Omega\Conf(m,\R^n)).
\end{equation}
\end{thm}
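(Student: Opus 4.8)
The plan is to exhibit the asserted map as a composition of two quasi-isomorphisms that are already available, and then simply read off the cohomology isomorphism. First I would record the only hypothesis that does real work: for $n \geq 3$ the space $\Conf(m,\R^n)$ is simply connected. Indeed $\Conf(m,\R^n)$ is $(n-2)$-connected (its first nonvanishing reduced cohomology sits in degree $n-1$), so $n \geq 3$ forces $\pi_1 = 0$. This is precisely what the Chen factor below requires.

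Next I would identify the two factors. The map $\varint_{\mathrm{formality}}$ is by definition $B(I)\colon \Ba(\D(m)) \to \Ba(\ChdR^{\ast}(\Conf(m,\R^n)))$, which is a CDGA quasi-isomorphism by \refT{BraidsFormality} (the left-hand arrow of \eqref{E:BarZigzag}), already valid for all $n \geq 2$. The second factor $\varint_{\mathrm{Chen}}$ is Chen's integration map \eqref{E:ChenIntegrationMapBraids}; here I would invoke Chen's theorem, reviewed in \refS{Chen}, to conclude that it is a quasi-isomorphism
$$
\varint_{\mathrm{Chen}}\colon \Ba(\ChdR^{\ast}(\Conf(m,\R^n))) \stackrel{\simeq}{\longrightarrow} \ChdR^{\ast}(\Omega\Conf(m,\R^n))
$$
precisely because $\Conf(m,\R^n)$ is simply connected. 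At the level of the abstract machinery this is the composite \eqref{E:QuasiisoComp}, whose three arrows are quasi-isomorphisms for a simply connected space by \refP{ModelsInduceQuasiIsos}, the homeomorphism \eqref{E:Loops=Tot}, and \refT{CommuteTotAndCochains}; the role of \refS{Chen} is to show that in the de Rham setting this composite is realized concretely by iterated integration of forms, i.e.\ exactly by $\varint_{\mathrm{Chen}}$.

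Composing the two CDGA quasi-isomorphisms then produces the desired quasi-isomorphism $\varint_{\mathrm{Chen}} \circ \varint_{\mathrm{formality}}$, and passing to cohomology yields the isomorphisms \eqref{E:TotBraidsCohomIso} for every $i \geq 0$. I would note that replacing singular cochains $\Ch^{\ast}$ by PA (de Rham) forms on the compactified configuration space throughout is harmless by \refP{ModelsInduceQuasiIsos}, which guarantees that the quasi-isomorphism type is preserved under passing between CDGAs quasi-isomorphic to $\Ch^{\ast}(\Conf(m,\R^n))$.

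The genuinely delicate step is the Chen factor, and the obstacle is not the statement but its identification: one must verify that $\varint_{\mathrm{Chen}}$ is \emph{literally} the realization of the abstract composite \eqref{E:QuasiisoComp} in the de Rham/PA-forms model, and not merely an abstractly isomorphic map. This is what the cosimplicial reformulation of \refS{CobarCohomology}, applied in \refS{Chen}, is designed to supply; once that identification is in hand, the remainder of the argument is a formal composition of quasi-isomorphisms and the corresponding isomorphism on $\Ho^{\ast}$.
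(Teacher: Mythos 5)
Your proposal is correct and matches the paper's proof essentially verbatim: the paper likewise factors the map as $B(I)$ (a quasi-isomorphism by \refT{BraidsFormality}) followed by $\varint_{\mathrm{Chen}}$ (a quasi-isomorphism via \eqref{E:QuasiisoComp} because $\Conf(m,\R^n)$ is simply connected for $n\geq 3$), and defers the identification of $\varint_{\mathrm{Chen}}$ with the abstract composite to \refS{Chen}. Your added remarks on $(n-2)$-connectivity and on the de Rham/PA-forms replacement are accurate elaborations rather than departures from the paper's argument.
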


\begin{proof}
Recall from \eqref{E:QuasiisoComp} that, as long as $\Conf(m,\R^n)$ is simply connected, which is the case for $n\geq 3$, we have a quasi-isomorphism
\begin{equation}\label{E:ChenMainProof}
\varint_{\mathrm{Chen}}\colon \Ba(\ChdR^{\ast}(\Conf(m,\R^n))\stackrel{\simeq}{\longrightarrow} \Ch^\ast(\Omega\Conf(m,\R^n)).
\end{equation}
Combining this with the formality integration quasi-isomorphism in \refT{BraidsFormality}, namely
$$
 \varint_{\mathrm{formality}}=B(I) \colon \Ba(\D(m))\stackrel{\simeq}{\longrightarrow} \Ba(\ChdR^{\ast}(\Conf(m,\R^n))),
$$
we have the desired result.
\end{proof}

In Section \ref{S:Chen}, we will justify why \eqref{E:ChenMainProof} can indeed be understood as the Chen iterated integral map, as indicated in the Introduction.  Intuitively, formality takes care of integration over diagrams with free vertices.  What remains are chord diagrams, and integration over those can be seen as Chen's iterated integrals.

We can also apply \refC{CDGAModelForOmegaX} to this setup and immediately deduce the following:  

\begin{cor}\label{C:CDGAModelForBraids} 
For $n\geq 3$, the cohomology of $\Omega\Conf(m,\R^n)$ over any field of characteristic zero is the cohomology of the CDGA
\begin{equation}\label{E:CDGAModel}
\Ba(\Ho^{\ast}(\Conf(m,\R^n)))=
\Big(      \bigoplus\limits_{p=0}^{\infty}   s^{-p} (\Ho^*(\Conf(m, \R^n))^{\otimes p})
\Big/
\sum_{i=0}^{p-1} \operatorname{im}((s^i)^*),
 \,   
              \sum_{i=0}^{p+1}(-1)^i (d^i)^*
 \Big),
 \end{equation}
 where $s^{-p}$ denotes the degree shift.
\end{cor}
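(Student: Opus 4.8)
The plan is to obtain \refC{CDGAModelForBraids} as a direct application of \refC{CDGAModelForOmegaX} to the space $X = \Conf(m,\R^n)$, so the entire task reduces to verifying the two hypotheses of that corollary. First I would observe that $\Conf(m,\R^n)$ is simply connected whenever $n \geq 3$: the space is $(n-2)$-connected since removing the fat diagonal from $(\R^n)^m$ removes a subset of codimension $n \geq 3$, so $\pi_1$ vanishes. Second, I would invoke \refT{ConfigurationFormality} (Kontsevich's theorem), which provides the explicit zig-zag of CDGA quasi-isomorphisms $\ChdR^\ast(\Conf(m,\R^n)) \stackrel{I}{\longleftarrow} \D(m) \stackrel{\overline{I}}{\longrightarrow} \Ho^*(\Conf(m,\R^n))$, exhibiting $\Conf(m,\R^n)$ as an $\R$-formal space in the sense of \refD{Formal}. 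With both hypotheses in hand, \refC{CDGAModelForOmegaX} applies verbatim with $\mathbb{K} = \R$.

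Next I would identify the abstract CDGA \eqref{E:CDGAModel} appearing in \refC{CDGAModelForOmegaX} with the concrete bar complex $\Ba(\Ho^*(\Conf(m,\R^n)))$ claimed in the statement. Here I would simply unwind the definition of the normalized bar complex from \refS{BarConstruction}: by \eqref{E:BarNormalizedDoubleComplex} the $(-p,q)$-piece is $(\calA^{\otimes p})^q$ modulo the images of the degeneracies $s_i$, and taking $\calA = \Ho^*(\Conf(m,\R^n))$ with zero differential collapses the vertical differential $\delta_1$ from \eqref{E:VertDiffBar} to zero, leaving only the horizontal (cosimplicial coface) differential $\sum_{i=0}^{p+1}(-1)^i(d^i)^*$. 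The degree shift $s^{-p}$ records the total-complex grading $k = q - p$ from \eqref{E:TotCochainsCosimplicial}, so the displayed formula in \eqref{E:CDGAModel} is literally the totalization $\Ba(\Ho^*(\Conf(m,\R^n)))$. This matching is purely bookkeeping once one notes that the bar differential on a CDGA with zero internal differential is exactly the cosimplicial coface sum.

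I do not expect any serious obstacle, since both inputs are already available: \refC{CDGAModelForOmegaX} does all the conceptual work (combining formality via \refT{BarFormal} with the Chen/cobar quasi-isomorphism \eqref{E:QuasiisoComp} for simply connected $X$), and \refT{ConfigurationFormality} supplies formality. The only point requiring a sentence of justification is the simple-connectivity of $\Conf(m,\R^n)$ for $n \geq 3$, which guarantees that the extension of the zig-zag \eqref{E:BarFormal} to $\ChdR^\ast(\Omega\Conf(m,\R^n))$ is valid; for $n = 2$ this fails and the corollary genuinely does not apply, which is consistent with the dimension hypothesis in the statement. Thus the proof is essentially a one-line invocation: apply \refC{CDGAModelForOmegaX} to the $\R$-formal, simply connected space $\Conf(m,\R^n)$, and read off the resulting CDGA as the normalized bar complex on its cohomology.
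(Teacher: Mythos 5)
Your proposal is correct and follows exactly the paper's route: the paper deduces this corollary immediately from \refC{CDGAModelForOmegaX} applied to the formal (by \refT{ConfigurationFormality}) and simply connected (for $n\geq 3$) space $\Conf(m,\R^n)$. Your additional unwinding of the normalized bar complex and the remark on simple-connectivity are just the bookkeeping the paper leaves implicit.
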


Let us elaborate on the structure of the cohomology of \eqref{E:CDGAModel}.  It will turn out to be conveniently described in terms of a certain subspace of $\D(m)$ of \emph{braid chord diagrams} (\refD{BCD}).

\begin{defin}\label{D:BraidDiagram}
A \emph{braid diagram} is an element 
$$
[\Gamma_1 | \cdots | \Gamma_p]\in \D(m)^{\otimes p}
$$
such that each $\Gamma_i$ is a diagram.
\end{defin}

The following two pictures represent braid diagrams in $\Ba(\D(3))$, with $p=3$ and $p=2$ respectively.  The horizontal segments are strictly not part of the data, but they can be drawn in a canonical manner and suggest a relationship to braids on 3 strands.
\[
\raisebox{-1.4pc}{\includegraphics[scale=0.25]{LLM.eps}} \qquad \qquad
\raisebox{-1.4pc}{\includegraphics[scale=0.25]{LT.eps}} 
\]

Now since $\overline{I}: \D(m) \to \Ho^*(\Conf(m, \R^n))$ is surjective, we can represent the CDGA \eqref{E:CDGAModel} by the images of braid diagrams under $\overline{I}$.  From the definition of $\overline{I}$, each class in $\Ho^*(\Conf(m, \R^n))$ is thus represented by a product of chords, using the product on $\D(m)$.   
Thus an element of $\Ba(\Ho^*(\Conf(m, \R^n)))$ is a linear combination of terms $[\Gamma_1 | \cdots | \Gamma_p]$ where each $\Gamma_i$ has no free vertices.  
(Note however that such linear combinations are generally not closed in $\D(m)$ because the 3-term relation does not hold there.)

With the combinatorics of $\D(m)$ in hand, it is easier to think about the normalization module and the differential in \eqref{E:CDGAModel}.  First, this allows us to establish a vanishing line in the double complex as follows.

\begin{prop}\label{P:VanishingLine}
The double complex $\Ba^{*,*}(\Ho^{*}(\Conf(m, \R^n)))$  has a lower vanishing line of slope $1-n$, i.e.~everything below this line is zero. (Here we picture the double complex in the second quadrant as usual.)
\end{prop}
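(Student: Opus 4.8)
The plan is to read the vanishing line off directly from two structural facts: the normalization that removes the unit from each bar slot, and the location of the lowest positive-degree cohomology of $\Conf(m,\R^n)$.

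First I would recall that a homogeneous generator of $\Ba^{-p,q}(\Ho^*(\Conf(m,\R^n)))$ has the form $[a_1|\cdots|a_p]$ with $a_i\in\Ho^*(\Conf(m,\R^n))$ and $q=\sum_{i=1}^p |a_i|$. Passing to the normalized complex \eqref{E:BarNormalizedDoubleComplex} means quotienting by the images of the degeneracies $s_i$, which insert the unit $1$. Since $\Conf(m,\R^n)$ is connected we have $\Ho^0(\Conf(m,\R^n))=\R\cdot 1$, so any factor of degree $0$ is a scalar multiple of $1$ and the corresponding monomial lies in the image of a degeneracy. Hence every nonzero class of $\Ba^{-p,q}$ is represented by some $[a_1|\cdots|a_p]$ in which each $a_i$ has strictly positive degree.

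Next I would invoke the description of $\Ho^*(\Conf(m,\R^n))$ recalled in \refS{Integral}: as an algebra it is generated by the classes $\alpha_{ij}$, each of degree $n-1$. Consequently every nonzero homogeneous element is a product of $\alpha_{ij}$'s and so has degree a nonnegative multiple of $n-1$; equivalently, every exponent in the Poincar\'e polynomial \eqref{E:PoincarePoly} is a multiple of $n-1$. In particular the cohomology vanishes in degrees $1,\dots,n-2$, so each positive-degree factor satisfies $|a_i|\geq n-1$. Combining with the previous step gives
\[
q=\sum_{i=1}^p |a_i|\geq p(n-1),
\]
whence $\Ba^{-p,q}(\Ho^*(\Conf(m,\R^n)))=0$ whenever $q<p(n-1)$.

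Finally I would translate this into the geometric statement. Drawing the double complex in the second quadrant with horizontal coordinate $-p$ and vertical coordinate $q$, the equality $q=p(n-1)$ describes the line through the origin along which a decrease of the horizontal coordinate by $1$ raises $q$ by $n-1$; this line has slope $1-n$, and the bound $q\geq p(n-1)$ says exactly that every nonzero entry lies on or above it. I do not expect a genuine obstacle here, as the argument is pure degree counting; the only points needing care are the appeal to normalization, which is what upgrades the crude bound $|a_i|>0$ to the sharp $|a_i|\geq n-1$, and the orientation of the axes, since placing $-p$ on the horizontal axis is what makes the slope come out as $1-n$ rather than $n-1$.
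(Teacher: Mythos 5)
Your proof is correct and follows essentially the same route as the paper's: the paper phrases the normalization step in terms of braid diagrams (a term is in the image of a degeneracy iff some entry is the empty diagram, so every surviving term has at least one chord of degree $n-1$ in each entry), which is exactly your observation that each slot must carry a factor of positive degree, hence degree at least $n-1$. The degree count $q\geq p(n-1)$ and the resulting slope $1-n$ are identical.
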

\begin{proof}
Consider the $i$th codegeneracy
\begin{align*}
s^i\colon \Conf(m, \R^{n})^p &  \longrightarrow \Conf(m, \R^{n})^{p-1},\ \ \ \ 0\leq i\leq p-1 \\
(a_1, a_2, ..., a_p) & \longmapsto (a_1, a_2, ..., a_{i}, a_{i+2}, ..., a_p).
\end{align*}
Since $a_{i+1}$ does not appear in the image of $s^i$, it follows that a braid diagram representing an element of  $\Ho^*(\Conf(m, \R^{n})^p)$ is in the image of 
$$(s^i)^*\colon \Ho^*(\Conf(m, \R^{n})^{p-1})\longrightarrow  \Ho^*(\Conf(m, \R^{n})^p)$$
if and only if it has the empty diagram in the $(i+1)$st entry.
The normalization module $\sum_{i=0}^{p-1} \operatorname{im}((s^i)^*)$ is thus generated by all braid diagrams which have the empty diagram in at least one entry.  It then follows that any term in the double complex $\Ba^{*,*}(\Ho^{*}(\Conf(m, \R^n)))$ can be represented by braid diagrams which have at least one chord in every entry.  This is only possible if $q\geq p(n-1)$.
In other words, $\Ho^{-p,q}(\Conf(m, \R^n)^\bullet)$ has a lower vanishing line of slope $1-n$.
\end{proof}

The double complex also has an upper vanishing line.  Namely, we see from the Poincar\'e polynomial \eqref{E:PoincarePoly} that the top cohomology group of  $\Conf(m, \R^{n})$ occurs in degree $(m-1)(n-1)$, and thus the double complex vanishes for $q>p(m-1)(n-1)$.
However, we can do even better.  \refP{NoCohomAboveVanishLine}, as well as \refT{BraidCohIsCDMod4T} below, appear in various guises in earlier literature; see Remark \ref{R:RelToEarlierWork}.

\begin{prop}\label{P:NoCohomAboveVanishLine}
All of the cohomology of $\Omega\Conf(m,\R^n)$ is concentrated on the vanishing line $q=(1-n)p$ of the double complex $\Ba^{*,*}(\Ho^{*}(\Conf(m, \R^n)))$.
\end{prop}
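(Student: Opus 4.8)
The plan is to recognize the stated concentration as the \emph{Koszulness} of the algebra $A := \Ho^{\ast}(\Conf(m,\R^n))$. First, since $\Ho^{\ast}(\Conf(m,\R^n))$ carries the zero differential, the vertical differential $\delta_1$ of \eqref{E:VertDiffBar} on the double complex $\Ba^{\ast,\ast}(\Ho^{\ast}(\Conf(m,\R^n)))$ vanishes identically. Hence the cohomology of the totalization is computed, within each fixed internal degree $q$, by the horizontal bar differential $\delta_2$ of \eqref{E:HorDiffBar} alone. This identifies the double-complex cohomology in bidegree $(-p,q)$ with $\mathrm{Tor}^{A}_{p,q}(\R,\R)$, the bar homology of the augmented algebra $A$; by \refC{CDGAModelForBraids} the same groups compute $\Ho^{\ast}(\Omega\Conf(m,\R^n))$. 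The claim to be proved is thus precisely that $\mathrm{Tor}^{A}_{p,q}(\R,\R)$ vanishes off the line $q=(n-1)p$.

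Next I would reduce this to a Koszulness statement. The algebra $A$ is quadratic: it is generated by the classes $\alpha_{ij}$ of internal degree $n-1$, subject to the relations \eqref{E:CohomCond2}, \eqref{E:CohomCond4}, and the three-term relation \eqref{E:CohomCond5}, all of which are quadratic after passing to generators with $i<j$ as noted before \refP{NoCohomAboveVanishLine}. Since $A$ is concentrated in degrees that are multiples of $n-1$, rescaling the internal grading by $n-1$ presents $A$ as an ordinary graded (graded-commutative) algebra generated in degree $1$, under which the line $q=(n-1)p$ becomes the usual diagonal $q=p$. By Priddy's theorem \cite{Priddy:Koszul}, a quadratic algebra has its bar homology $\mathrm{Tor}_{p,q}$ concentrated on the diagonal exactly when it is Koszul, so the Proposition is equivalent to the Koszulness of $A$.

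The concrete route I would take to prove Koszulness is to verify that the quadratic relations of $A$ form a quadratic Gr\"obner basis, i.e.\ that $A$ is a PBW algebra in the sense of \cite{Priddy:Koszul}, which implies Koszulness. Fixing the lexicographic order on the generators $\alpha_{ij}$ with $i<j$, the relation \eqref{E:CohomCond2} eliminates repeated letters, the (anti)commutation relation \eqref{E:CohomCond4} sorts a monomial into increasing order, and the three-term relation \eqref{E:CohomCond5} is oriented so as to rewrite its leading monomial in terms of the two smaller ones. One then checks that every overlap ambiguity resolves: the nontrivial ones arise from a triple of indices, controlled by \eqref{E:CohomCond5}, and from a quadruple, controlled by \eqref{E:CohomCond4}. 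These are the standard confluence checks for the Arnold/Orlik--Solomon presentation. As a sanity check one may confirm the Hilbert-series identity $H_A(t)\,H_{A^{!}}(-t)=1$, with $H_A(t)=\prod_{k=1}^{m-1}\bigl(1+k\,t^{n-1}\bigr)$ from \eqref{E:PoincarePoly} and $A^{!}=\mathrm{U}\mathcal{L}_m(n-2)$ the Cohen--Gitler algebra --- the Koszul dual being the enveloping algebra of the Yang--Baxter Lie algebra is the homological shadow of \refT{BraidCohIsCDMod4T} --- although this numerical identity is only necessary, not sufficient, so it supplements rather than replaces the confluence argument. Alternatively one may simply cite the known Koszulness of configuration-space cohomology \cite{B:Koszul, Bezrukavnikov:Koszul-Config}.

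The essential content, and the only real work, is the Koszulness of $A$; everything else is bookkeeping around the vanishing of $\delta_1$. Within the Gr\"obner-basis verification the delicate point is sign management across parities: for even $n$ the generators are odd, \eqref{E:CohomCond4} is genuine anticommutation, and \eqref{E:CohomCond2} is automatic, whereas for odd $n$ the generators are even, \eqref{E:CohomCond4} is commutation, and \eqref{E:CohomCond2} must be imposed, so the confluence computation must be run with the signs dictated by \eqref{E:CohomCond3}--\eqref{E:CohomCond4} in each case. I expect the triple-index overlap governed by \eqref{E:CohomCond5} to be the step requiring the most care, as that is where the Koszulness of these algebras is classically known to hinge.
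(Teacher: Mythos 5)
Your proposal is correct and follows essentially the same route as the paper: both reduce the statement to the Koszulness of $\Ho^{*}(\Conf(m,\R^n))$ and invoke Priddy's theorem via a PBW basis built from the quadratic relations \eqref{E:CohomCond2}, \eqref{E:CohomCond4}, \eqref{E:CohomCond5}, citing the same sources \cite{B:Koszul, Bezrukavnikov:Koszul-Config}. The only difference is that you sketch the Gr\"obner-basis confluence checks explicitly, whereas the paper simply records the resulting PBW basis and delegates the verification to those references.
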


\begin{proof}
It is convenient to rephrase this result in the following terms.  Let $A:=\Ho^{*}(\Conf(m, \R^n))$.
For $\alpha = \alpha_{i_1j_1} \cdots \alpha_{i_kj_k} \in A$, define the \emph{weight} of $\alpha$ as $w(\alpha)=k$.
For an element $\alpha = [\alpha_1 | \cdots | \alpha_p]$ in $\Ba(A)$, define the \emph{weight} $w(\alpha) := \sum_{i=1}^p w(\alpha_i)$.  
As we explain below, a theorem of Priddy \cite[Theorem 5.3]{Priddy:Koszul} implies that for our $A$, the cohomology in $\Ba(A)$, is spanned by terms $[\alpha_{i_1j_1} | \cdots | \alpha_{i_pj_p}]$ whose weight is equal to the homological degree $p$.\footnote{For an algebra $A$ with quadratic relations, this condition on $\Ba(A)$ is equivalent  to $A$ being a Koszul algebra.  The terminology ``weight'' is used by Loday and Vallette \cite{LV:AlgOperads}, though in \cite{Priddy:Koszul} Priddy uses the term ``length'' instead.}  
Since each $\alpha_{ij}$ has degree $n-1$, this is equivalent to the result stated above.

The rough idea behind Priddy's theorem is that an element $\beta=[ \ldots |ab| \ldots]$ in $\Ba(A)$ looks like a term in $d[\ldots | a | b | \ldots]$.  Of course, the bar differential has multiple terms, so showing the triviality of $\beta$ in homology involves some subtlety.  However, with a certain kind of basis for $A$, namely a Poincar\'e--Birkhoff--Witt (PBW) basis, Priddy shows that one can appropriately filter the complex to establish the result.
In the setting of configuration spaces, the relations \eqref{E:CohomCond2}, \eqref{E:CohomCond4}, and \eqref{E:CohomCond5}
among the $\alpha_{ij}$ can be used to show that the set of products $\alpha_{i_1 j_1} \cdots \alpha_{i_k j_k}$ with $i_\ell < j_\ell$ for all $\ell=1,...,k$ and with $i_{\ell -1} < i_\ell$ for all $\ell=2,...,n$ is a PBW basis for $\Ho^*(\Conf(m, \R^n))$.
This PBW basis appears in work of Berglund \cite[Example 32]{B:Koszul} but goes back at least as far back as work of Bezrukavnikov \cite[Section 2]{Bezrukavnikov:Koszul-Config}.
\end{proof}

The Koszul property of $\Ho^*(\Conf(m,\R^n))$ from \refP{NoCohomAboveVanishLine} motivates the following definition.

\begin{defin}\label{D:BCD}
Let $\Gamma_{ij}$ denote the diagram in $\D(m)$ with a single chord between segment vertices $i\neq j$, oriented from $i$ to $j$ if $n$ is odd.  Note that $\Gamma_{ji}=(-1)^n\Gamma_{ij}$.
A \emph{braid chord diagram} is a braid diagram of the form $[\Gamma_{i_1j_1} | \cdots | \Gamma_{i_pj_p}] \in \Ba(\D(m))$, i.e.~the diagram in each entry is a single chord.
\end{defin}

The proof of \refP{NoCohomAboveVanishLine} shows that every cohomology class can be viewed (via the map  $B(\overline{I})$ from $\Ba(\D(m))$) as a linear combination of braid chord diagrams.  
To fully describe this cohomology, it remains to describe the subspace of \emph{closed} linear combinations of braid chord diagrams. 
That is, we just have to compute the kernel of the differential 
$\sum_{i=0}^{p+1}(-1)^i (d^i)^*$ in $\Ba(\Ho^*(\Conf(m,\R^n))$ \eqref{E:CDGAModel}.  

\begin{defin}\label{D:BCD(m)}
Let $\BCD(m)$ be the dual to the space of braid chord diagrams defined above.  More concretely, consider the basis of diagrams for $\D(m)$, which is canonical up to sign.  Let $B_{ij}$ ($i \neq j$) be the element in the dual space $\D(m)^*$ which is dual to the diagram $\Gamma_{ij}$ with a single chord.  
Then $\BCD(m)$ is spanned by elements $[B_{i_1j_1} | \cdots | B_{i_pj_p}]$ dual to the elements $[\Gamma_{i_1j_1} | \cdots | \Gamma_{i_pj_p}]$.
\end{defin}

In the literature, the elements of $\BCD(m)$ are sometimes called \emph{horizontal chord diagrams} and appear in finite type knot theory.  They correspond to the homology of spaces of braids, which is why we define $\BCD(m)$ so that our diagrams $\Gamma_{ij}$ live in its dual.

\begin{thm}\label{T:BraidCohIsCDMod4T}
For $n\geq 3$, the cohomology of $\Omega \Conf(m,\R^n)$ is concentrated in degrees that are multiples of $n-2$.
For any $p\geq 0$, $\Ho^{(n-2)p}(\Omega \Conf(m,\R^n); \R)$ is isomorphic to the space of linear combinations $\gamma$ of braid chord diagrams $[\Gamma_{i_1 j_1} | \cdots | \Gamma_{i_p j_p}]$ satisfying the following conditions.
\begin{itemize}
\item
\emph{4T relations}: $\gamma$ yields zero when paired with 
\[
[\cdots | B_{ij} | B_{jk} | \cdots]
- (-1)^{n} [\cdots | B_{jk} | B_{ij} | \cdots]
 + [\cdots | B_{ij} | B_{ik} | \cdots]
- (-1)^n[\cdots | B_{ik} | B_{ij} | \cdots]  
\]
where $i,j,k$ are distinct.
\item
\emph{Shuffle relations}:  $\gamma$ yields zero when paired with 
\[
[ \cdots | B_{ij} | B_{k\ell} | \cdots]
- (-1)^{n} [\cdots | B_{k\ell} | B_{ij} | \cdots]
\]
where $i,j,k,\ell$ are distinct.
\end{itemize}
\end{thm}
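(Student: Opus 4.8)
The plan is to combine the concentration result of \refP{NoCohomAboveVanishLine} with a direct analysis of the bar differential, and then to dualize.

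First I would record the degree statement. By \refC{CDGAModelForBraids} and \refT{DiagramsBraidCohomology}, for $n \geq 3$ we may compute $\Ho^*(\Omega\Conf(m,\R^n))$ as the cohomology of the double complex $\Ba^{*,*}(\Ho^*(\Conf(m,\R^n)))$, whose entries lie in the second quadrant indexed by $(-p,q)$ with total degree $k = q - p$. \refP{NoCohomAboveVanishLine} says all the cohomology sits on the line $q = (n-1)p$, where $k = (n-1)p - p = (n-2)p$. Hence the cohomology is concentrated in degrees that are multiples of $n-2$, and $\Ho^{(n-2)p}$ is computed entirely in bidegree $(-p,(n-1)p)$.

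Next I would identify the relevant cochain group and differential. Since $A := \Ho^*(\Conf(m,\R^n))$ carries the zero internal differential, the vertical differential $\delta_1$ vanishes and only the horizontal (bar) differential $\delta_2$ of \eqref{E:HorDiffBar} survives; on the normalized complex its outer face terms drop out, so $\delta_2$ is the signed sum of the ``multiply two adjacent entries'' maps. In bidegree $(-p,(n-1)p)$, the normalization forces every slot to contain at least one chord (as in the proof of \refP{VanishingLine}), and since each $\alpha_{ij}$ has degree $n-1$ the total weight is exactly $p$; thus this group is precisely the span of braid chord diagrams $[\Gamma_{i_1j_1} | \cdots | \Gamma_{i_pj_p}]$ with one chord per slot. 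The row $q=(n-1)p$ has no cochains in homological position $p+1$ (that would require $p+1$ nonempty slots of total weight $p$), so there is no incoming differential and
\[
\Ho^{(n-2)p}(\Omega\Conf(m,\R^n)) \cong \ker\bigl(\delta_2 \colon \Ba^{-p,(n-1)p}(A) \to \Ba^{-p+1,(n-1)p}(A)\bigr),
\]
the space of \emph{closed} linear combinations of braid chord diagrams.

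The remaining step is to identify this kernel via duality, which is where the Koszul property established in \refP{NoCohomAboveVanishLine} pays off. Dualizing against $\BCD(m)$ (\refD{BCD(m)}), the condition $\delta_2\gamma = 0$ is equivalent to $\gamma$ annihilating the image of the transpose $\delta_2^*$, which splits a single weight-two entry into two adjacent chords according to the comultiplication dual to the product on $A$. The weight-two part of $A$ is the quadratic quotient cut out by \eqref{E:CohomCond2}, \eqref{E:CohomCond4}, and \eqref{E:CohomCond5}; transposing, the image of $\delta_2^*$ is spanned by the insertion into adjacent slots of the orthogonal (Koszul-dual) relations. I would check that the dual of the graded-commutativity relation \eqref{E:CohomCond4} for disjoint indices is exactly the stated shuffle element, and that the dual of the three-term relation \eqref{E:CohomCond5} is exactly the stated 4T element---equivalently, that these are the defining relations of the Yang--Baxter Lie algebra \eqref{E:Yang-Baxter} whose enveloping algebra is the Cohen--Gitler homology. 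A cocycle is then precisely a $\gamma$ pairing to zero against every such insertion, which is the asserted characterization.

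The main obstacle I anticipate is the sign bookkeeping in this dualization. One must reconcile the Koszul signs $(-1)^{\varepsilon(i)}$ of \eqref{E:HorDiffBar}, the graded-commutativity sign $(-1)^{n-1}$ in \eqref{E:CohomCond4}, and the orientation convention $\Gamma_{ji}=(-1)^n\Gamma_{ij}$ of \refD{BCD}, and verify that they assemble into precisely the factors $(-1)^n$ appearing in the 4T and shuffle elements. Getting these signs to match across the transpose---uniformly in the parity of $n$---is the delicate part; the underlying linear algebra (kernel equals the orthogonal complement of the image of the transpose) is otherwise routine.
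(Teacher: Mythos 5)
Your proposal is correct and follows essentially the same route as the paper: both use \refP{NoCohomAboveVanishLine} (via Priddy's theorem) to reduce to the kernel of the horizontal bar differential on braid chord diagrams, and both identify that kernel as the annihilator of the quadratic relations \eqref{E:CohomCond2}, \eqref{E:CohomCond4}, \eqref{E:CohomCond5}, with 3T dualizing to 4T and graded-commutativity to the shuffle relations. The paper carries out the term-by-term sign verification that you defer, but your framing of the kernel as the orthogonal complement of the image of the transpose is exactly the orthogonality the paper says it is checking.
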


\begin{rems}
\label{R:RelToEarlierWork}
 \ 
\begin{enumerate}
\item
As mentioned in the Introduction, the conditions in \refT{BraidCohIsCDMod4T} are linear dual to the Yang--Baxter Lie relations
\begin{align}
\label{E:Yang-Baxter1}
[B_{ij}, B_{jk} + B_{ik}]&=0 \text{ for } i,j,k,\ell \text{ distinct} \\
\label{E:Yang-Baxter2}
[B_{ij}, B_{k\ell}]&=0 \text{ for } i,j,k,\ell \text{ distinct}
\end{align}
where for classes $B,B'$ of degree $n-2 \equiv n \mod 2$ as above, one defines $[B,B']:=BB' - (-1)^n B'B$.
These appear in Cohen and Gitler's calculation \cite{CG} of the \emph{homology} of the space of braids.  
Since the cohomology of the bar complex computing $\Ho^*(\Omega \Conf(m,\R^n))$ is also the Koszul dual coalgebra of $\Ho^*(\Conf(m, \R^n))$ \cite[Chapter 3]{LV:AlgOperads}, the {homology} $\Ho_*(\Omega \Conf(m,\R^n))$ is its Koszul dual {algebra}, and the Yang--Baxter Lie relations are just the orthogonal complement to the relations \eqref{E:CohomCond2}, \eqref{E:CohomCond3}, and \eqref{E:CohomCond5}, where $B_{ij}$ is dual to $\alpha_{ij}$.  This orthogonality is essentially what we check below.
This proof is indicated in \cite[Example 32]{B:Koszul} and \cite[Section 4]{Bezrukavnikov:Koszul-Config}.
\item
As also mentioned in the Introduction, versions of Theorems \ref{T:DiagramsBraidCohomology} and \ref{T:BraidCohIsCDMod4T} are familiar from work of Kohno.  Namely, our Theorem \ref{T:DiagramsBraidCohomology} is a refinement of his \cite[Theorem 5.1]{Kohno:LoopsFiniteType} in that it explicitly describes his ``iterated integrals of length $<p$'' as integrals over diagrams with free vertices.  In addition, our Theorem \ref{T:BraidCohIsCDMod4T} is \cite[Theorem 4.1]{Kohno:LoopsFiniteType}, although Kohno's main input appears to be the Cohen--Gitler result \cite{CG} while we use just the bar complex \cite{Priddy:Koszul}.
\end{enumerate}
\end{rems}

\begin{proof}[Proof of \refT{BraidCohIsCDMod4T}] 
Each $\Gamma_{ij}$ has degree $(n-1)$, so the total degree of a braid diagram 
$[\Gamma_{i_1j_1} | \cdots | \Gamma_{i_pj_p}]$ is $p(n-1)-p = p(n-2)$.  
Thus the cohomology of $\Omega\Conf(m,\R^n)$ is indeed concentrated in degrees that are multiples of $n-2$.

For a combination $\gamma$ of diagrams to be closed, each $(d^k)^*$ must vanish on it, since $(d^k)^*(\gamma)$ has a product of chords in only the $k$th factor.  The effect of $(d^k)^*$ is to contract two chords between vertices $i<j$ and $i'<j'$.  If $i=i'$ and $j=j'$, then the result is zero by \eqref{E:CohomCond2}.  If $i,j,i',j'$ are all distinct, then by \eqref{E:CohomCond4}, the result can be canceled if and only if there is a matching term with the order of the two chords swapped.  This yields the shuffle relations, with the claimed sign.

If $\{i,j,i',j'\}$ are three distinct indices, then the contraction can be canceled either by one other term via \eqref{E:CohomCond4} or two other terms via the 3T relation \eqref{E:CohomCond5}.  The former accounts for the second term in the 4T relation.  The latter accounts for the third and fourth terms in the 4T expression, keeping in mind relations \eqref{E:CohomCond3} and \eqref{E:CohomCond4}.  
The agreement of the signs of the first and third terms in the 4T relation can be seen by rewriting the 3T relation as $\alpha_{ij}\alpha_{jk} - \alpha_{ik}\alpha_{jk} - \alpha_{ij}\alpha_{ik}=0$.  
(The first and second terms in 3T lead to $[B_{ij} + B_{ik}, B_{jk}]=0$, while the first and third terms lead to $[B_{ij}, B_{jk} + B_{ik}]=0$, but the former relation can be rewritten in the form of the latter, by swapping all the indices.)
\end{proof}


\section{Chen iterated integrals}\label{S:Chen}


Recall that one of our main results, \refT{DiagramsBraidCohomology}, had  \eqref{E:QuasiisoComp} as one of the main inputs:  For a simply-connected space $X$, there is a quasi-isomorphism
$$
\Ba(\Ch^*(X))\stackrel{\simeq}{\longrightarrow} \Ch^*(\Omega X).
$$
For $X=\Conf(m,\R^n)$, $n\geq 3$, this turned into \eqref{E:ChenMainProof}, and was used in the proof of \refT{DiagramsBraidCohomology}:
$$
\Ba(\ChdR^{\ast}(\Conf(m,\R^n))\stackrel{\simeq}{\longrightarrow} \Ch^\ast(\Omega\Conf(m,\R^n)).
$$
The claim made there and in the Introduction was that this map is really $\varint_{\mathrm{Chen}}$, the Chen iterated integral.  

The goal of this section is to make that relationship explicit.  In particular, we will explain how Chen integrals are related to the map $\rho$ from \eqref{E:rho-def}, as this is the main ingredient in the quasi-isomorphism \eqref{E:QuasiisoComp}.

In Section \ref{S:ChenLoops}, we first review the set up of Chen interated integrals  and Chen's version of the de Rham loop space theorem \cite{Chen:Itr-Integrals, Chen:Path-Integrals}. More details can be found, for example, in the monograph \cite{Hain:Integrals} by Hain or in Kohno's work, e.g.~\cite{Kohno:VassilievBraids}.
In \refS{Chen'sTheorem}, we then relate $\rho$ to the Chen iterated integrals and deduce Chen's Iterated Integral Theorem giving the cohomology of the loop space.


\subsection{Chen iterated integrals and the cohomology of loop spaces}\label{S:ChenLoops}


As before we denote by $\Delta^p$ the standard $p$-simplex, $X$ will now be a based smooth finite-dimensional manifold, and $\Omega X$ the based loop space as usual.  Recall from \eqref{E:Evaluation} that for each $p>0$, there is an evaluation map
\begin{equation}\label{E:ChenEvaluation}
\begin{split}
\ev_p\colon  \Delta^p\times \Omega  X & \longrightarrow X^p\\
(t_1, ..., t_p, \gamma) & \longmapsto (\gamma(t_1), ..., \gamma(t_p)). 
\end{split}
\end{equation}
Given smooth forms $\omega_1$, ..., $\omega_p$ on $X$, one can use $\ev_p$ to pull back their product to $\Delta^p\times \Omega X$, there by obtaining the form
\begin{equation}\label{E:omega_1...omega_p}
\omega_1\omega_2\cdots\omega_p=\ev^\ast_p(\pi^\ast_1\omega_1\wedge \cdots\wedge\pi^\ast_p\omega_p),
\end{equation}
where $\pi_k:X^p\to X$ is the projection onto the $k$th factor of $X^p$. This form can be pushed forward to $\Omega X$, i.e.~integrated along the fiber of the projection 
$\pr\colon \Delta^p\times \Omega X\longrightarrow \Omega X$.
The fiber is just $\Delta^p$, and we thus obtain the following differential form on $\Omega X$:
\begin{equation}\label{E:ChenForm}
\varint \omega_1 \omega_2 \cdots \omega_p:=\pr_*\bigl(\omega_1 \omega_2 \cdots \omega_p\bigr)=\int_{\Delta^p}\omega_1 \omega_2 \cdots \omega_p.
\end{equation}
The degree of this form is $q-p$, where $q=\sum^p_{k=1} |\omega_k|$ is the sum of the degrees of the forms $\omega_k$. In Chen's terminology \cite{Chen:Itr-Integrals, Chen:Path-Integrals}, $\varint \omega_1 \omega_2 \cdots \omega_p$ is an \emph{iterated integral}.

Now let $\Chen^{-p,q}(X)$
be the $\R$-vector space spanned by iterated integrals $\varint \omega_1 \omega_2 \cdots \omega_p$ of degree $q$ and set
\[
\Chen^{*,*}(X)=\bigoplus_{p,q\geq 0} \Chen^{-p,q}(X).
\]
This is the \emph{Chen complex of $X$} and is a subcomplex of $\ChdR^\ast(\Omega X)$, the CDGA of smooth differential forms on $\Omega X$.\footnote{Since $\Omega X$ is technically not a manifold, for the purpose of defining $\ChdR^\ast(\Omega X)$, Chen  introduced a notion of a {\em differentiable space} \cite{Chen:Itr-Integrals, Chen:Path-Integrals}.}
The differential on $\Chen^{*,*}(X)$ can be computed via the Stokes' Theorem as
\[
d\varint \omega_1\omega_2\cdots\omega_p=d\Bigl(\int_{\Delta^p}\omega_1\omega_2\cdots\omega_p \Bigr)= 
\int_{\Delta^p} d \bigl(\omega_1\omega_2\cdots\omega_p\bigr) 
+\int_{\partial\Delta^p}\omega_1\omega_2\cdots\omega_p.
\]
Expanding this expression, in the iterated integral notation we obtain
\begin{equation}\label{E:ChenDifferential}
\begin{split}
d\bigl(\varint \omega_1\cdots \omega_p\bigr) & =\sum^p_{i=1} (-1)^i \varint \omega_1\cdots d \omega_i\cdots \omega_p\\
& \qquad +\sum^{p-1}_{i=1} (-1)^{\sum^i_{k=1} (|\omega_k|+1)} \varint \omega_1\cdots (\omega_i\wedge \omega_{i+1})\cdots \omega_p.
\end{split}
\end{equation}

Notice that, if $\omega_k$ are closed, then the first summand on the right side of the above equation is zero.  This case occured in the setting of the braid diagram complex in \refS{CobarBraidCohomology}, where $X$ was the configuration space. 
In fact,  $\Chen^{*,*}(X)$ has the structure of a double cochain complex,  i.e. we have differentials
\[
d_1\colon \Chen^{-p,q}(X)\longrightarrow \Chen^{-p,q+1}(X),\qquad
d_2\colon \Chen^{-p,q}(X)\longrightarrow \Chen^{-p+1,q}(X),
\]
given by the two summands of the differential in \eqref{E:ChenDifferential}.
Thus we obtain the total complex associated to this double complex, denoted by $\Tot \Chen^{*,*}(X)$ (in the literature, shorter notation $\Chen^\ast(X)$ is often used); this is the cochain complex which is in degree $k$ given by 
\[
\Tot \Chen^{*,*}(X)^k=\bigoplus_{k=q-p} \Chen^{-p,q}(X)
\] 
with the total differential 
\begin{equation}\label{E:TotalDiffChen}
d\colon \Chen^{-p,q}(X) \longrightarrow \Chen^{-p+1,q}(X)\oplus \Chen^{-p,q+1}(X)
\end{equation}
defined as $d=d_1+ d_2$. Clearly, the above construction is reminiscent of the bar complex \eqref{E:TotCochainsCosimplicial} and its differential $\delta$ defined in \eqref{E:TotalDiff}.
In fact, directly from \eqref{E:BarTotalDiff} and \eqref{E:BarQuasiTot}, we have a CDGA map which we call the {\em Chen integration map}:
\begin{equation}\label{E:ChensintegrationMap}
\begin{split}
\varint_{\mathrm{Chen}}\colon\Ba(\ChdR^{\ast}(X)) & \longrightarrow \Chen^{*,*}(X)\subset \Ch^\ast(\Omega X),\\
[\omega_1 | \omega_2 | \cdots | \omega_p] & \longrightarrow \varint \omega_1 \omega_2 \cdots \omega_p.
\end{split}
\end{equation}
For a simply connected manifold $X$, the map $\varint_{\mathrm{Chen}}$ to $\Chen^{*,*}(X)$  is an isomorphism of CDGAs \cite{Chen:Itr-Integrals, Chen:Path-Integrals}.  In the next section, we show that it induces a quasi-isomorphism to $\Ch^\ast(\Omega X)$, which is enough for our purposes.


\subsection{Chen's Theorem}\label{S:Chen'sTheorem}


The main contribution of this subsection is to relate the Chen's integration map $\varint_{\mathrm{Chen}}$ to the map 
$\rho$ defined in \eqref{E:rho-def} and, as a consequence of results in Section \ref{S:CobarCohomology}, obtain Chen's Theorem:

\begin{thm}[\cite{Chen:Itr-Integrals, Chen:Path-Integrals}]\label{T:Chen}
	For a simply connected manifold $X$, the Chen's integration map $\varint_{\mathrm{Chen}}$ is a quasi-isomorphism. Thus, the cohomology of the complex  $\Tot \Chen^{*,*}(X)$
	is isomorphic to the singular cohomology of the loop space $\Omega X$ with real coefficients, i.e.
	\begin{equation}\label{eq:Chen-isomorphism}
	\Ho^k(\Tot \Chen^{*,*}(X))\cong \Ho^k(\Omega X;\R), \ \ \ k\geq 0.
	\end{equation}
\end{thm}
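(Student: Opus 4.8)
The plan is to identify the Chen integration map $\varint_{\mathrm{Chen}}$ of \eqref{E:ChensintegrationMap} with the differential-form incarnation of the composite of quasi-isomorphisms \eqref{E:QuasiisoComp}, thereby reducing the statement to \refT{CommuteTotAndCochains}. Concretely, I would argue that $\varint_{\mathrm{Chen}}$ equals the de Rham analog of the map $\rho$ of \eqref{E:rho-def}, precomposed with the K\"{u}nneth quasi-isomorphism \eqref{E:BarQuasiTot} and postcomposed with the pullback along the homeomorphism $\Omega X \cong \Tot X^\bullet$ of \eqref{E:Loops=Tot}. Once this identification is in place, the fact that $\varint_{\mathrm{Chen}}$ is a quasi-isomorphism for simply connected $X$ is immediate, since each of the three maps in the de Rham version of \eqref{E:QuasiisoComp} is a quasi-isomorphism in that case.

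The heart of the argument is the explicit matching of the two constructions on generators. Under \eqref{E:BarQuasiTot}, the bar generator $[\omega_1 | \cdots | \omega_p]$ corresponds to the form $\pi_1^\ast\omega_1 \wedge \cdots \wedge \pi_p^\ast\omega_p$ on $X^p$, where $\pi_k \colon X^p \to X$ is the $k$th projection. Pulling this back by the evaluation map \eqref{E:ChenEvaluation} produces exactly the form $\omega_1\omega_2\cdots\omega_p$ on $\Delta^p \times \Omega X$ of \eqref{E:omega_1...omega_p}. It then remains to check that the de Rham analog of the slant product $\rho_p(x \otimes y) = x(\iota_p)\,y$ is fiber integration over $\Delta^p$: in the de Rham setting, pairing the $\Delta^p$-factor of a decomposed form against the fundamental chain $\iota_p$ picks out the top-degree piece in the $\Delta^p$ direction and integrates it, which is precisely the pushforward $\pr_\ast$ along $\Delta^p \times \Omega X \to \Omega X$. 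Hence the composite sends $[\omega_1 | \cdots | \omega_p]$ to $\int_{\Delta^p}\omega_1\omega_2\cdots\omega_p = \varint\omega_1\omega_2\cdots\omega_p$, matching \eqref{E:ChenForm} and \eqref{E:ChensintegrationMap} on the nose.

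To conclude, I would invoke the de Rham version of \refT{CommuteTotAndCochains}: although that theorem is phrased for singular cochains, \refP{ModelsInduceQuasiIsos} together with the de Rham theorem (identifying $\ChdR^\ast$ with singular cochains over $\R$, compatibly with the slant product) shows that the de Rham $\rho$ is again a quasi-isomorphism when $X$ is simply connected. Combining this with the K\"{u}nneth quasi-isomorphism \eqref{E:BarQuasiTot} and the pullback isomorphism induced by \eqref{E:Loops=Tot} shows that the composite, which we have identified with $\varint_{\mathrm{Chen}}$, is a quasi-isomorphism. The isomorphism \eqref{eq:Chen-isomorphism} then follows, since $\Tot\Chen^{*,*}(X) \subset \ChdR^\ast(\Omega X)$ computes $\Ho^\ast(\Omega X;\R)$.

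The step I expect to be the main obstacle is the compatibility check in the second paragraph: verifying that, after passing to differential forms on the (non-manifold) space $\Omega X$, the slant product against $\iota_p$ really becomes the iterated integral, with all Eilenberg--Zilber/K\"{u}nneth signs matching the Fubini signs built into fiber integration over $\Delta^p$. This requires working within Chen's framework of forms on a differentiable space and keeping careful track of the bigrading conventions $\Chen^{-p,q}$, but it is ultimately a naturality-and-signs bookkeeping task rather than a new idea.
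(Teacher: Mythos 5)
Your proposal is correct and follows essentially the same route as the paper: the paper likewise identifies $\varint_{\mathrm{Chen}}$ with the de Rham incarnation of $\rho$ from \eqref{E:rho-def} (precomposed with the K\"{u}nneth quasi-isomorphism), verifies the match on generators by an explicit computation using the evaluation map, the slant product against $\iota_p$, and Fubini's Theorem, and then concludes via \refT{CommuteTotAndCochains}.
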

\begin{rem}\label{R:CodegeneraciesCohomology}
	One actually obtains an isomorphism of Hopf algebras; see Theorem 3.1 in \cite{Hain:Integrals}.
\end{rem}

\begin{proof}
	
	Denote as usual by $\Ch^*$ and $\Ch_*$ smooth singular chains and cochains, and by $\ChdR^\ast$ the de Rham cochains. From \eqref{E:TotAndCochains} and \eqref{E:rho-def}, 
	the map
	\[
	\rho:\Tot \Ch^{*,*}(X^\bullet) \longrightarrow \Ch^*\Tot (X^\bullet)
	\]
	is the dual of the natural map
	\[
	\Ch_q(\Tot (X^\bullet))\xrightarrow{\iota_p\times\cdot} \Ch_{p+q}(\Delta^p\times \Tot (X^\bullet))\xrightarrow{(\ev_p)_\ast} \Ch_{p+q}(X^p)
	\]
	on chains.  Here
 $\iota_p:\Delta^p\longrightarrow\Delta^p$ is the standard simplex of \eqref{E:rho-def}, and  $\iota_p\times\cdot$ is defined on any singular simplex $\sigma:\Delta^q\longrightarrow \Tot (X^\bullet)$ as
	\begin{equation}
	(\iota_p\times\cdot)(\sigma)=\iota_p\times \sigma,
	\end{equation}
	i.e. $\iota_p\times \sigma$ represents the singular chain
	\[
	\iota_p\times \sigma:\Delta^p\times \Delta^q\longrightarrow \Delta^p\times \Tot (X^\bullet), \qquad (\iota_p\times \sigma)(s,t)=(\iota_p(s),\sigma(t))
	\]  
	after a suitable subdivision of $\Delta^p \times \Delta^q$.  Map $(\ev_p)_\ast$ is the induced homomorphism via the evaluation map \eqref{E:ChenEvaluation}.
	Given $\tau\in \Ch^{p+q}(X^p)$, $\rho(\tau)$ evaluates on chains in $\Ch_q(\Tot (X^\bullet))$ as 
	\begin{equation}\label{E:rho-eval}
	\rho(\tau)(\sigma)=\tau\bigl((\ev_p)_\ast(\iota_p\times \sigma)\bigr).
	\end{equation}
	Next, we can replace singular cochains $\Ch^{p+q}(X^p)$ in \eqref{E:rho-def} with de Rham cochains $\ChdR^{p+q}(X^p)$. Recall the  K\"{u}nneth Theorem isomorphism for differential forms (see, for example, \cite{Bott-Tu:DiffForms})
	\[
	\begin{split}
	\bigoplus_{i_1+i_2+\cdots+i_p=p+q,\ i_k\geq 0} \ChdR^{i_1} (X)\otimes\cdots\otimes\ChdR^{i_p} (X) & \stackrel{\cong}{\longrightarrow} \ChdR^{p+q}(X^p),\\
	\omega_1\otimes\cdots\otimes \omega_p & \longrightarrow \pi^\ast_1 \omega_1\wedge \pi^\ast_2 \omega_2\wedge 
	\cdots \wedge \pi^\ast_p \omega_p,
	\end{split}
	\] 
	where  $\pi_i\colon X^p\to X$ is the projection onto the $i$th factor of $X^p$. Using the standard de Rham isomorphism induced by the integral duality (again see \cite{Bott-Tu:DiffForms}),
	\[
	\begin{split}
	\ChdR^{p+q}(X^p)\times \Ch_{p+q}(X^p) & \longrightarrow \R,\\
	(\alpha,\sigma) & \longrightarrow \int_{\sigma} \alpha=\int_{\Delta^{p+q}} \sigma^\ast\alpha,
	\end{split}
	\]
	and $\tau=\pi^\ast_1 \omega_1\wedge \pi^\ast_2 \omega_2\wedge 
	\cdots \wedge \pi^\ast_p \omega_p$, we can now compute \eqref{E:rho-eval} as follows:
	\[
	\begin{split}
	\rho(\tau)(\sigma)=\tau\bigl((\ev_p)_\ast(\iota_p\times \sigma)\bigr) & =\int_{\Delta^p\times\Delta^q} (\iota_p\times \sigma)^\ast\bigl(\ev^\ast_p(\pi^\ast_1\omega_1\wedge \cdots\wedge\pi^\ast_p\omega_p)\bigr)\\
	& =\int_{\Delta^q} \int_{\Delta^p} \sigma^\ast\bigl(\ev^\ast_p(\pi^\ast_1 \omega_1\wedge 
	\cdots \wedge \pi^\ast_p \omega_p)\bigr)= \int_{\Delta^q} \sigma^\ast\bigl(\varint \omega_1\omega_2\cdots\omega_p\bigr).
	\end{split}
	\]
	Here we used Fubini's Theorem, along with \eqref{E:omega_1...omega_p} and \eqref{E:ChenForm}. The inclusion  $j\colon \ChdR^{\ast}(X^\bullet)\longrightarrow \Ch^{\ast}(X^\bullet)$ is a quasi-isomorphism (by the standard de Rham isomorphism) and thus induces 
	an isomorphism $\Ba(\ChdR^{\ast}(X))\cong\Tot \ChdR^{*,*}(X^\bullet)\cong \Tot \Ch^{*,*}(X^\bullet)$.  The above calculation thus shows 
	\[
	\rho\circ j=\varint_{\mathrm{Chen}}.
	\]
	Now the desired claim follows from Theorem \ref{T:CommuteTotAndCochains}.
\end{proof}


\section{Relation to Bott-Taubes configuration space integrals}
\label{S:B-TRelation}


In this section, we establish some of our main results, relating configuration space integrals for long links
to integrals for braids.  The latter type of integrals are Chen's iterated integrals composed with the formality configuration space integrals, i.e.~the bottom map in the square \eqref{E:MainCommutativeSquare}.  This section discusses the other maps in that diagram and establishes its commutativity.

In Sections \ref{S:ReviewOfLD} and \ref{S:ReviewOfLDIntegrals}, we review $\LD(m)$, the diagram complex for configuration space integrals for long links.  In Section \ref{S:OrderDefectForBraidDiagrams}, we introduce a new bigrading on the bar complex on diagrams for braids $\Ba(\D(m))$, which will be compatible with the bigrading on $\LD(m)$.  In Section \ref{S:MappingLDtoBD}, we define a map from $\LD(m)$  to $\Ba(\D(m))$.  Finally, in Section \ref{S:Integrals} we use this map of diagram spaces as well as the restriction from link cohomology to braid cohomology to compare these two types of integrals show that they are compatible.

\begin{figure}[ht]
	\centering
	\includegraphics[width=0.5\linewidth]{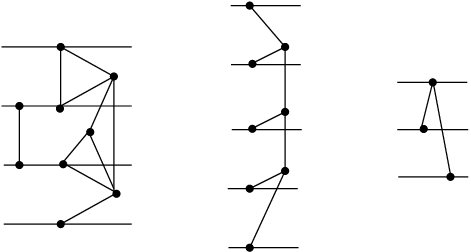}
	\qquad \qquad \raisebox{1pc}{\includegraphics[width=0.25\linewidth]{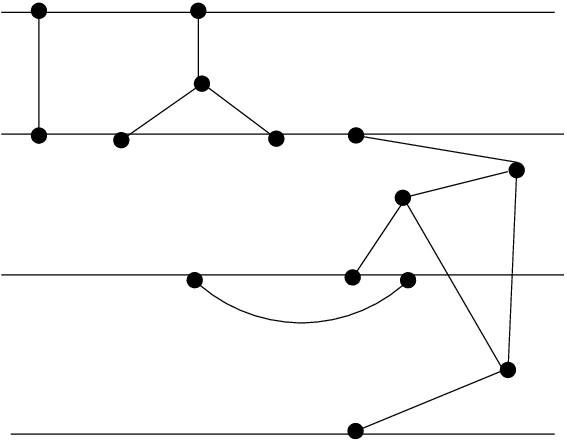}}
	\caption{Example diagrams in $\LD(m)$ for $m=4,5,3$, and $4$ respectively.}
	\label{fig:hodiagramexamples}
\end{figure}

\subsection{Review of the complex $\LD(m)$}
\label{S:ReviewOfLD}

We now review the cochain complex $\LD(m)$ of diagrams for long links of $m$ components.  Details can be found in \cite[Section 3]{KMV:FTHoLinks}.  Such diagrams differ only slightly from braid diagrams in the bar complex $B(\D(m))$ (Definition \ref{D:Diagrams}).  The ambient dimension for our links is ${n+1}$, since a loop in $\Conf(m, \R^n)$ may be viewed as such a long link.

\begin{defin}
\label{D:LinkDiagrams}
Fix $n\geq 2$ and $m\geq 1$.  A \emph{link diagram $\Gamma$ (in $\R^{n+1}$ on $m$ vertices)} consists of 

\begin{itemize}
\item
$m$ oriented intervals called \emph{segments};
\item
a set of vertices, partitioned into \emph{segment vertices} and \emph{free vertices}.  Thus
$$V(\Gamma) = V_{\mathrm{seg}}(\Gamma) \sqcup V_{\mathrm{free}}(\Gamma);$$
\item
a set of edges $E(\Gamma)$ joining vertices of $\Gamma$, where an edge between two distinct segment vertices is called a \emph{chord};
\item
a set of \emph{arcs}, where an \emph{arc} is a part of a segment between two vertices (and an arc is not considered an edge).
\end{itemize}

We require that each free vertex has valence at least 3 and each segment vertex has valence at least 1, where only edges (not arcs) count towards valence.
\end{defin}

By a \emph{component} of $\Gamma$ we mean a connected component in the diagram resulting from forgetting the $m$ segments.  We will also use the notion of \emph{grafts} of $\Gamma$, the precise definition of which is given in \cite[Definitions 4.9 and 4.10]{KMV:FTHoLinks}.  Roughly, one first replaces each segment vertex by the set of its neighbors; the resulting components are the grafts.  One may think of this process as blowing up segment vertices so that they become univalent (possibly disconnecting components of $\Gamma$).

\begin{defin}
\label{D:LDOrientation}
An \emph{orientation} of a link diagram $\Gamma$ consists of the following data:
\begin{itemize}
\item
for $n+1$ odd, an ordering of $V(\Gamma)$ (i.e.~a labeling by a totally ordered set), and an orientation of each edge;
\item 
for $n+1$ even, an ordering of $V_{\mathrm{seg}}(\Gamma)$, and an ordering of $E(\Gamma)$.
\end{itemize}
We call a link diagram together with an orientation an \emph{oriented link diagram}.
\end{defin}

As with $\D(m)$, we let $\LD(m)$ be the $\R$-vector space of oriented link diagrams modulo these relations:
\begin{itemize}
\item
$\Gamma \sim 0$ if $\Gamma$ has any multiple edges;
\item
for odd $n$, $\Gamma \sim - \Gamma'$ if $\Gamma$ and  $\Gamma'$ differ by a transposition of two vertex labels;
\item
for odd $n$, $\Gamma \sim -\Gamma'$ if $\Gamma$ and $\Gamma'$ differ by an orientation-reversal of an edge;
\item
for even $n$, $\Gamma \sim - \Gamma'$ if $\Gamma$ and  $\Gamma'$ differ by a transposition of two edge labels.
\end{itemize}

It is clear that for a given orientation in either parity, there is a representative labeling in which  
the segment vertices are labeled according to the order of the segments and the order of vertices on each segment.  For odd $n$, it is clear that the labeling can be further chosen so that the segment vertex labels precede all the free vertex labels.

The \emph{order} of a diagram $\Gamma$ is defined as 
\[
r = |E(\Gamma)| - |V_{\mathrm{free}}(\Gamma)|
\]
while the \emph{defect} of $\Gamma$ is defined as 
\[
s = 2 |E(\Gamma)| - 3 |V_{\mathrm{free}}(\Gamma)| - |V_{\mathrm{seg}}(\Gamma)|.
\]
Thus a diagram has defect zero precisely if it is trivalent, if we count edges and arcs alike towards valence.
The \emph{degree} of a diagram $\Gamma$ of order $r$ and defect $s$ is 
\[
k = ((n+1)-3)r + s  = (n-2)r +s
\]
and may be viewed as the total degree in $\LD(m)$.

The differential $d$ on $\LD(m)$ is given by 
\[
d\Gamma  = \sum \varepsilon(e) \Gamma/e
\]
where the sum is taken over arcs and non-chord edges $e$, $\varepsilon(e)$ is a sign depending on $e$, and $\Gamma/e$ is the result of contracting $e$ to a point.  
It preserves order and raises defect by one, thus raising total degree by one.  
This differential $d$ makes $\LD(m)$ into a cochain complex.

There is a product called the \emph{shuffle product} on $\LD(m)$.  
Note that via the orientation of each segment, any link diagram $\Gamma$ induces an ordering of the segment vertices on each segment.
Given diagrams $\Gamma_1, \Gamma_2$, a shuffle $\sigma$ is an ordering of the vertices of $\Gamma_1 \sqcup \Gamma_2$ on the $i$th segment for each $i=1,...,m$, which respects the orders induced by those on $\Gamma_1$ and $\Gamma_2$.  Define $\Gamma_1 \cdot_\sigma \Gamma_2$ as the link diagram given by placing the segment vertices on the segments in the order specified by $\sigma$.  
The orientation is given by leaving all the labels on vertices and edges in $\Gamma_1$ unchanged and appropriately raising the labels in $\Gamma_2$.  The shuffle product is then defined as 
\[
\Gamma_1 \bullet \Gamma_2 := \sum_\sigma \Gamma_1 \cdot_\sigma \Gamma_2.
\]
This makes $\LD(m)$ into a CDGA.  
(Note that the labels on the segment vertices will in general not agree with the order of the vertices on the segment, as incorrectly stated just before Definition 3.21 in the work of the last two authors \cite{KMV:FTHoLinks}.)

There is also a \emph{coproduct}, given by deconcatenation.  Roughly the coproduct of $\Gamma$ is the sum of all the ways to cut each of the $m$ strands in two and produce two diagrams.  One can make this more precise by considering partitions of the segment vertices into two parts $p_1,p_2$ such that the order of the segment vertices on each segment is preserved and such that for each component $c \subset \Gamma$, all the segment vertices of $c$ are in either $p_1$ or $p_2$.

In \refS{MappingLDtoBD}, we will need the fact that the cohomology of the defect zero part of this complex is isomorphic to finite type invariants of long links, as well as a certain combinatorial description of this space.  In particular, classes in $\Ho^*(\LD(m))$ of defect zero are precisely linear combinations of trivalent diagrams whose coefficients satisfy the STU relation.  That is, they yield zero when paired with the sum below, where again we use Kronecker pairing on the basis of link diagrams, which as with braid diagrams is canonical up to signs.\footnote{Technically, the pairing also has a factor of $|\mathrm{Aut}(\Gamma)|$, as in \cite[Section 3.4]{KMV:FTHoLinks}.  However, in our application of this relation in \refS{MappingLDtoBD}, we will only consider forest long link diagrams, which have no nontrivial automorphisms.}  In each picture, the horizontal line is a segment, while the arrows are edges.  To get the correct signs, one needs orientations of the diagrams (i.e.~vertex-labels and edge-orientations or edge-labels), but the signs will not be needed in our application.  (A planar embedding of a diagram determines an orientation of odd type, and the signs are for these orientations.)  For more details, see \cite[Proposition 3.33]{KMV:FTHoLinks}. 
\begin{figure}[ht]
	\centering
\begin{equation}
\label{E:STU}
\raisebox{-1.8pc}{\includegraphics[height=3.6pc]{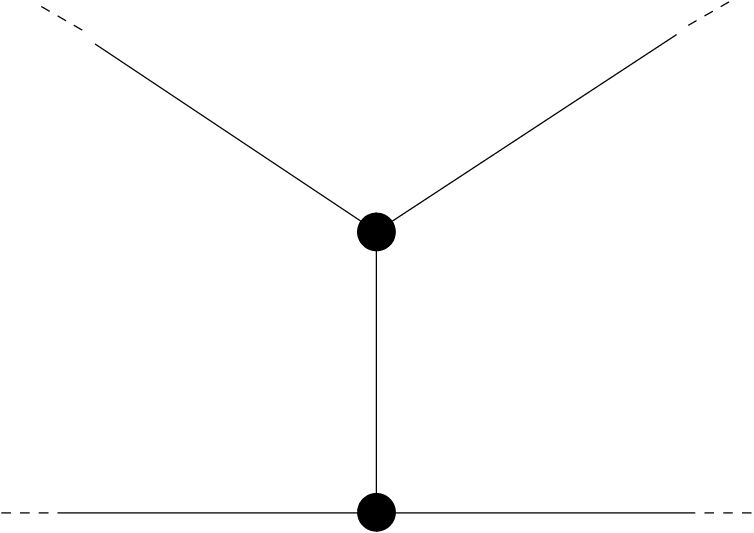}} -
\raisebox{-1.8pc}{\includegraphics[height=3.6pc]{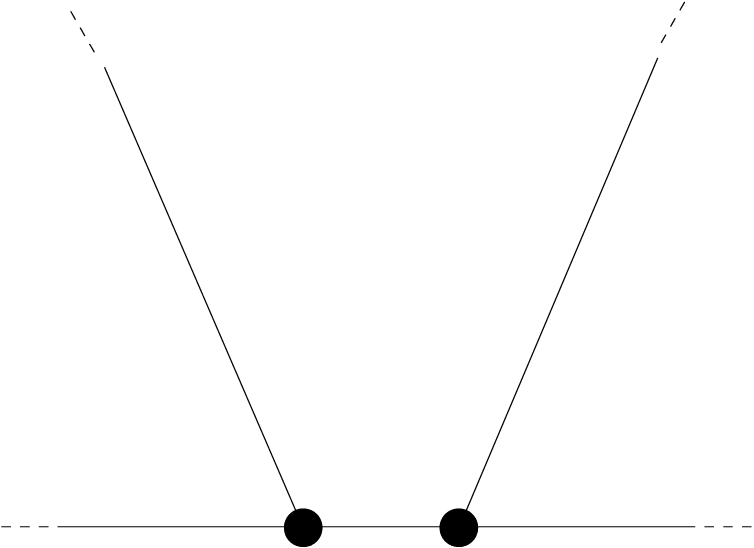}} +
\raisebox{-1.8pc}{\includegraphics[height=3.6pc]{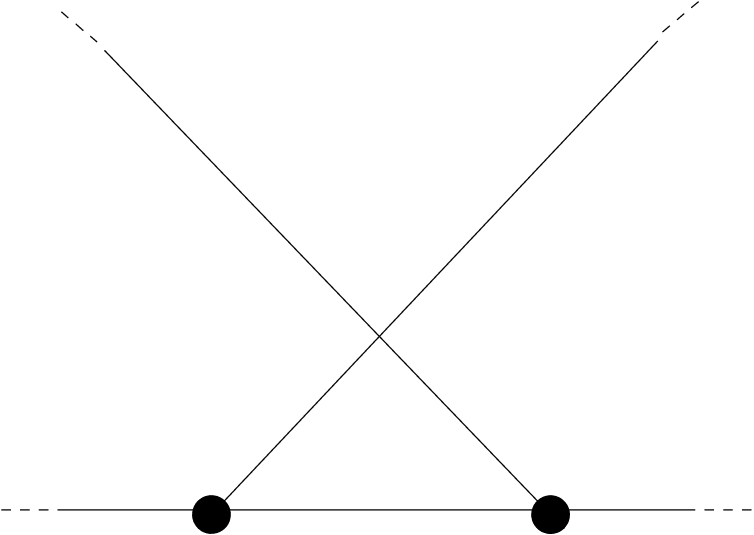}}=0
\end{equation}
\caption{The STU relation.}
\end{figure}

\subsection{Integration on the link diagram complex}
\label{S:ReviewOfLDIntegrals}  Recall from \refS{BraidsAsLinks} the definition of the space $\mathcal{L}_m^{n+1}$ of $m$-component long links in $\R^{n+1}$. 
We now briefly review the integration map from $\LD(m)$ which produces cohomology classes in $\mathcal{L}_m^{n+1}$; \cite[Section 4]{KMV:FTHoLinks} has the details.  

The crucial ingredient is the square \eqref{E:BT-Square} below, where the upper-left space is defined as the pullback.  
In line with Remark \ref{R:Compactification}, all the configuration spaces are now in fact compactified; in particular, the spaces on the right side are Axelrod--Singer compactifications of configuration spaces of points in $\R^{n+1}$, viewed as a subset of $S^{n+1}$.  
A point in the configuration space in the lower-left corner roughly consists of $i_k$ points on the $k$th copy of $\R$, but technically, one must embed these as a long link in $\R^{n+1}$, and then view them as points on a singular submanifold of $S^{n+1}$ by compactifying at $\infty$.  
For this compactification to be well defined, we must also require that the $2m$ fixed affine-linear embeddings have distinct directions in $S^n$; essentially, the $2m$ link strands must intersect transversely at infinity.  We may however take these $2m$ lines to lie in a subspace $\R^2 \subset \R^{n+1}$.
The right column forgets the last $j$ points, while the bottom row is evaluation of the long link at the configuration points in its domain. 
\begin{equation}
\label{E:BT-Square}
\xymatrix{
\Conf_{\mathcal{L}}(i_1,\dots ,i_m;j)
\ar[r]  \ar[d]  &
\Conf (i_1 + \cdots + i_m + j, \R^{n+1})\ar[d]^{pr}  \\
\mathcal{L}_m^{n+1}\times  \Conf \left((i_1, \ldots, i_m), \coprod_{1}^m \R\right)  \ar[r]^-{ev} &
\Conf (i_1+ \cdots +i_m, \R^{n+1})
}
\end{equation}

The pullback space above is the total space of a bundle
\[
\widetilde{\pi}\colon \xymatrix{\Conf_{\mathcal{L}}(i_1,...,i_m;j) \ar[r] & \mathcal{L}_m^{n+1} }
\]
using the left vertical map, followed by projection to $\mathcal{L}_m^{n+1}$.  Denote its fiber $F[i_1,...,i_m; j]$.  With the same notation as in the formality integration, we let $\alpha_{ij}$ denote a similar pullback from $S^n$.  Namely, $\alpha_{ij}$ is the pullback of a unit volume form on $S^n$ via the composition 
\[
\xymatrix{
\Conf_{\mathcal{L}}(i_1,...,i_m;j) \ar[r] & \Conf (i_1 + \cdots + i_m + j, \R^{n+1}) \ar[r]^-{\phi_{ij}} & S^n
}
\]
where $\phi_{ij}(x_1,...,x_n) := (x_j - x_i) / |x_j - x_i|$ for interior points (and is extended continuously to the compactification).  
Let
$$
\alpha_\Gamma = \bigwedge_{\text{edges $(i,j)$ of $\Gamma$}}\alpha_{ij}.
$$
Now let $\Gamma$ be a diagram with $i_1, ..., i_m$ segment vertices on the segments $1,...,m$ and $j$ free vertices.  Define the Bott--Taubes integration map by 
\begin{equation}
\label{E:BT-Integral}
\widetilde{I}(\Gamma) =\widetilde{\pi}_*(\alpha_\Gamma)=
\int_{F[i_1, ..., i_m; j]} \bigwedge_{\text{edges }(i,j)} \alpha_{ij} 
\end{equation}
and extending by linearity to all of $\LD(m)$.  Note the similarity to the formality integral  \eqref{E:FormalityIntegral}.  The main difference is that the base space of the bundle along which integration takes place is different.  Hence integration  $I(\Gamma)=\pi_{*}(\alpha_\Gamma)$ produces forms on the configuration space $\Conf(m, \R^n)$ while integration $\widetilde{I}(\Gamma) =\widetilde{\pi}_*(\alpha_\Gamma)$ produces forms on $\mathcal{L}_m^{n+1}$.  In both cases, the fact that the integration map is a chain map boils down to Stokes' Theorem and checking that all of the integrals along codimension one faces either vanish or are accounted for by the differential in the diagram complex.  

Differential forms on the infinite-dimensional manifold $\mathcal{L}_m^{n+1}$ can be made precise via a structure similar to Chen's differentiable spaces: roughly, one defines what smooth maps from arbitrary finite-dimensional manifolds into $\mathcal{L}_m^{n+1}$ are.  For details, see \cite[Section 2]{KMV:FTHoLinks} or the references mentioned therein.



\subsection{Another bigrading on braid diagrams}
\label{S:OrderDefectForBraidDiagrams}


For the purpose of mapping $\LD(m)$ to $\Ba(\D(m))$, we adapt the notions of defect and order on $\LD(m)$ to $\Ba(\D(m))$, thus defining a new bidegree $(r,s)$ on the latter complex.  
This new bidegree is not absolutely indispensable for our results in the next subsection, but it helps streamline considerations about the map of diagram complexes.
The most important result in this subsection is \refP{BraidCohSpannedByDefectZero}, which says that all of the cohomology of $\Ba(\D(m))$ lies in defect zero.  In the next subsection, after defining the map $\phi\colon \LD(m) \to \Ba(\D(m))$, we will see in \refP{PhiPreservesDegreeEtc} that it preserves both defect and order.

We first need to extend the notions of \emph{components} and \emph{grafts} to diagrams in $\D(m)$ and braid diagrams in $\Ba(\D(m))$.   For the following definition, view generators of $\Ba^{-p,q} (\D(m))$ as $p$ diagrams lying on $m$ parallel strands.
 
\begin{defin} \ 
\begin{itemize}
\item
For a diagram $\Gamma \in \D(m)$, a \emph{component} is a connected component of $\Gamma$ which contains at least one edge.  
\item
For such a diagram, we define the \emph{grafts} of $\Gamma$ similarly as for link diagrams: Replace each segment vertex by the set of its neighbors and create a new diagram with the same number of free vertices and edges as $\Gamma$, but possibly more segment vertices; the grafts are the components of the resulting diagram.  
\item
For a braid diagram $[\Gamma_1 | \cdots | \Gamma_p] \in \D(m)^{\otimes p}$, the components and grafts are the disjoint unions of those of the factors $\Gamma_i$.
\item
Call a diagram $\Gamma$ \emph{connected} if it has only one component.
\item
Call a diagram $\Gamma$ \emph{internally connected} if it has only one graft.
\end{itemize}
\end{defin}

Define the \emph{order} of a diagram $\Gamma \in \D(m)$ as
\[
r(\Gamma) = |E(\Gamma)| - |V_{\mathrm{free}}(\Gamma)|.
\]
Define the \emph{defect} of a diagram $\Gamma \in \D(m)$ as 
\[
s(\Gamma) = |E(\Gamma)| - 2 |V_{\mathrm{free}}(\Gamma)|  - 1.
\]

We extend these definitions to braid diagrams $[\Gamma_1 | \cdots | \Gamma_p] \in \D(m)^{\otimes p}$ by defining the order to be
\[ 
r([\Gamma_1 | \cdots | \Gamma_p]) := \sum_{i=1}^p r(\Gamma_i) = \sum_{i=1}^p (|E(\Gamma_i)| - |V_{\mathrm{free}}(\Gamma_i)|)
\]
and the defect to be 
\[
s([\Gamma_1 | \cdots | \Gamma_p]) :=
\sum_{i=1}^p s(\Gamma_i) =
\sum_{i=1}^p (|E(\Gamma_i)| - 2 |V_{\mathrm{free}}(\Gamma_i)| - 1).
\]

Each of the two differentials in $\Ba(\D(m))$ preserves the order and raises the defect by one, just as in the case of the differential on $\LD$.  So the same is true of the total differential in the bar complex.  The reader may verify the next proposition directly from the definitions.  It will later guarantee that, via the map of diagram complexes, the two integration maps produce forms of the same degree.
\begin{prop}
Let $\gamma = [\Gamma_1 | \cdots | \Gamma_p] \in \Ba^{-p,q}(\D(m))$ be a braid diagram (so that $q=|\Gamma_1|+...+|\Gamma_p|$).
Let $r$ be the order of $\gamma$ and $s$ be the defect of $\gamma$. 
Then the total degree $k:=q-p$ of $\gamma$ satisfies $k=(n-2)r+s$.
\qed
\end{prop}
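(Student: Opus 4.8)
The plan is to reduce the identity to a single-diagram computation and then sum over the $p$ tensor factors of $\gamma$. First I would record the three quantities attached to a diagram $\Gamma \in \D(m)$: its internal degree $|\Gamma| = (n-1)|E(\Gamma)| - n|V_{\mathrm{free}}(\Gamma)|$, its order $r(\Gamma) = |E(\Gamma)| - |V_{\mathrm{free}}(\Gamma)|$, and its defect $s(\Gamma) = |E(\Gamma)| - 2|V_{\mathrm{free}}(\Gamma)| - 1$. The core of the argument is the per-diagram identity $(n-2)r(\Gamma) + s(\Gamma) = |\Gamma| - 1$. This follows by substituting the definitions and collecting coefficients: the coefficient of $|E(\Gamma)|$ is $(n-2)+1 = n-1$, the coefficient of $|V_{\mathrm{free}}(\Gamma)|$ is $-(n-2)-2 = -n$, and there is a leftover constant $-1$, so that the right-hand side reassembles exactly into $|\Gamma| - 1$.

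Next I would sum this identity over the factors of $\gamma = [\Gamma_1 | \cdots | \Gamma_p]$. Since the order and defect of a braid diagram are defined as the sums $r = \sum_{i=1}^p r(\Gamma_i)$ and $s = \sum_{i=1}^p s(\Gamma_i)$, additivity gives $(n-2)r + s = \sum_{i=1}^p \bigl((n-2)r(\Gamma_i) + s(\Gamma_i)\bigr) = \sum_{i=1}^p (|\Gamma_i| - 1) = \bigl(\sum_{i=1}^p |\Gamma_i|\bigr) - p$. Using the hypothesis $q = |\Gamma_1| + \cdots + |\Gamma_p|$ together with the definition $k = q - p$ of the total degree, the right-hand side equals $q - p = k$, which is precisely the claimed relation $k = (n-2)r + s$.

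There is essentially no genuine obstacle here; the proposition is a bookkeeping check, as the surrounding text ("the reader may verify") suggests. The one point worth flagging is the role of the constant $-1$ in the single-diagram defect: each of the $p$ factors contributes such a $-1$, and these accumulate into the $-p$ shift that converts the internal (vertical) degree $q$ into the total bar-complex degree $q - p$. Put differently, the $-1$ in $s(\Gamma)$ is calibrated exactly so that the new $(r,s)$-bigrading on $\Ba(\D(m))$ is compatible with its total grading, mirroring the degree formula $k = (n-2)r + s$ that already holds on $\LD(m)$ and thereby making the forthcoming map $\phi\colon \LD(m) \to \Ba(\D(m))$ degree-preserving.
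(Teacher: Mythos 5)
Your proof is correct and is exactly the direct verification from the definitions that the paper intends (it supplies no proof, marking the proposition as an exercise for the reader). The per-diagram identity $(n-2)r(\Gamma)+s(\Gamma)=|\Gamma|-1$ followed by summation over the $p$ factors is the right bookkeeping, and your remark about the $-1$'s accumulating into the $-p$ shift correctly explains why the defect is normalized as it is.
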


The following Proposition will merely streamline the argument for surjectivity in cohomology of the map $\LD(m) \to \Ba(\D(m))$ in the next subsection.

\begin{prop}
\label{P:BraidCohSpannedByDefectZero}
The cohomology of the complex $\Ba(\D(m))$ is spanned by braid diagrams $[\Gamma_1 | \cdots  | \Gamma_p]$ 
of defect zero.  
\end{prop}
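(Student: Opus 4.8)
The plan is to use the order grading—which the total differential preserves—to reduce the statement to a question on the far more rigid complex $\Ba(\Ho^*(\Conf(m,\R^n)))$, where the Koszul property recorded in \refP{NoCohomAboveVanishLine} forces the answer.

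First, because each of the two differentials preserves order, so does the total differential, and hence $\Ba(\D(m))$ splits as a direct sum of subcomplexes $\Ba(\D(m))=\bigoplus_{r\ge0}\Ba(\D(m))_r$ indexed by order $r$; its cohomology splits accordingly. Inside the order-$r$ summand, a braid diagram of total degree $k$ has defect $s=k-(n-2)r$ by the degree relation $k=(n-2)r+s$ of the preceding proposition, so there defect is nothing but a shift of the total degree. It therefore suffices to prove that $\Ho^*(\Ba(\D(m))_r)$ is concentrated in total degree $(n-2)r$, i.e.\ in defect zero.

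To see this, I would equip $\Ho^*(\Conf(m,\R^n))$ with the weight grading in which each generator $\alpha_{ij}$ has weight $1$, extended multiplicatively; this is well defined since the relations \eqref{E:CohomCond2}, \eqref{E:CohomCond4}, and \eqref{E:CohomCond5} are quadratic, hence weight-homogeneous. Weight is preserved by the bar differential, so $\Ba(\Ho^*(\Conf(m,\R^n)))$ splits by weight as well. The point is that the formality map $\overline I$ sends a free-vertex-free diagram built from $c$ chords (which has order $c$) to a weight-$c$ monomial in the $\alpha_{ij}$, and sends any diagram containing a free vertex to $0$; hence $B(\overline I)$ carries $\Ba(\D(m))_r$ into the weight-$r$ part of $\Ba(\Ho^*(\Conf(m,\R^n)))$. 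Since $B(\overline I)$ is a quasi-isomorphism (\refT{BraidsFormality}) and both sides carry a grading preserved by their differentials and matched by $B(\overline I)$, it restricts to a quasi-isomorphism $\Ba(\D(m))_r\stackrel{\simeq}{\longrightarrow}\Ba(\Ho^*(\Conf(m,\R^n)))_{\mathrm{wt}=r}$ on each graded piece.

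Finally I would invoke \refP{NoCohomAboveVanishLine}: Priddy's theorem shows that the cohomology of $\Ba(\Ho^*(\Conf(m,\R^n)))$ is spanned by classes whose weight equals their homological (bar) length. Thus the weight-$r$ cohomology sits in bar length $r$, where every factor is a single $\alpha_{ij}$; such a class has total degree $r(n-1)-r=(n-2)r$. Transporting back through the isomorphism above, $\Ho^*(\Ba(\D(m))_r)$ is concentrated in total degree $(n-2)r$, i.e.\ in defect $s=(n-2)r-(n-2)r=0$, and summing over $r$ gives the claim. The one thing requiring care is the bookkeeping of the previous paragraph—verifying that order and weight are honest gradings preserved by their differentials and genuinely matched by $B(\overline I)$—since once this graded quasi-isomorphism is established, the Koszul input finishes the proof immediately. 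Note in particular that this argument never needs defect to be nonnegative: it locates the cohomology of each order-$r$ summand in a single total degree and reads off the defect there.
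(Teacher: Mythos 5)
Your proof is correct, but it is organized differently from the paper's. The paper grades $\Ba(\D(m))$ by \emph{defect} and uses the fact that the total differential raises defect by one: a cocycle splits into defect-homogeneous cocycles, and since $B(\overline{I})$ is a quasi-isomorphism that kills every diagram with a free vertex, any cohomologically nontrivial homogeneous piece must contain a term in which every bar factor is a single chord --- a term of defect zero --- which forces that piece to live in defect zero. You instead grade by \emph{order}, which the differential preserves, so that $\Ba(\D(m))$ splits into genuine subcomplexes; you then match order with the weight grading on $\Ba(\Ho^{*}(\Conf(m,\R^n)))$ under $B(\overline{I})$ and import the concentration statement of \refP{NoCohomAboveVanishLine} wholesale. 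Both routes rest on the same two inputs ($B(\overline{I})$ being a quasi-isomorphism, and Priddy's theorem), but yours makes the dependence on \refP{NoCohomAboveVanishLine} explicit and sidesteps the one slightly delicate step in the paper's version --- that a cocycle representing a nontrivial class must literally contain a single-chord-per-slot term, which requires observing that nothing off the vanishing line can cancel the vanishing-line component under the bar differential. The price is the bookkeeping you flag yourself: verifying that order and weight are honest gradings preserved by the differentials and matched by $B(\overline{I})$, all of which you check correctly. Your closing observation that the argument needs no a priori nonnegativity of defect is a genuine small advantage in transparency over the paper's phrasing.
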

\begin{proof}
We use the zig-zag \eqref{E:BarZigzag} of maps $I$ and $\overline{I}$ from $\Ba(\D(m))$ to cochains and cohomology respectively.
The map $\overline{I}$ is nonzero only on diagrams in which all grafts are chords.  
Thus every $\gamma = \sum_i c_i \gamma_i \in \Ba(\D(m))$ which is nontrivial in cohomology has at least one term (with nonzero coefficient) $\gamma_0 = [\Gamma_1 | \cdots | \Gamma_p]$ where each $\Gamma_i$ is a chord.  Such a diagram $\gamma_0$ has defect zero.  Now view $\Ba(\D(m))$ as the direct sum over $r$ of its defect $r$ subspaces, and recall that the differential raises defect by one.  
This implies that a representative for $\gamma$ can be taken to be a linear combination of terms of defect zero.
\end{proof}

\begin{rems}  
\label{R:DefectZeroRems}
The following are facts about braid diagrams in defect zero, with proofs sketched or left as exercises for the reader.
\begin{enumerate}
\item
For a defect zero braid diagram $\gamma$ in which every factor has one graft, the order of $\gamma$ is
\[
r=p + \sum |V_{\mathrm{free}}(\Gamma_i)|.
\]
\item
In the case of classical braids $(n=2)$, defect zero corresponds to total degree zero, or equivalently the main diagonal $\Ba^{-p,p}(\D(m))$. 
\item
As noted in the proof of \refP{BraidCohSpannedByDefectZero}, a braid diagram in which every factor is a single chord has defect zero.  Such diagrams span the vanishing line $q=(n-1)p$ in $\Ba^{-p,q}(\D(m))$.  However, not every defect zero diagram lies on this line.  For example, the tripod (one free vertex with three edges to three segment vertices on different segments) is of defect zero, and while it lives in $\Ba^{-1,1}(\D(m))$ when $n=2$, it lives in $\Ba^{-1, \, 2n-3}(\D(m))$ in general.  
\item
If $\Gamma$ is an internally connected diagram such that its graft is a trivalent tree, then $\Gamma$ has defect zero.  This can be checked using trivalence and the fact that a tree has Euler characteristic one.
\item
The converse of the previous statement is not true, as can be seen from diagrams with closed loops of edges and vertices of valence $>3$.   However, the surjectivity in cohomology of $\LD(m) \to \Ba(\D(m))$ will ensure that  cocycles in $\Ba(\D(m))$ can be written as combinations of $[\Gamma_1 | ... | \Gamma_p]$ where each $\Gamma_i$ is internally connected with a trivalent tree as its graft.
\end{enumerate}
\end{rems}


\subsection{Mapping the complex $\LD(m)$ to the bar complex of $\D(m)$}
\label{S:MappingLDtoBD}
We will now define a map $\phi$ from the link diagram complex $\LD(m)$ to the bar complex of (braid) diagrams $\Ba(\D(m))$.  
This map will not quite be a chain map, but we will show in \refP{PhiChainMapOnZ0} that its restriction to a certain subspace of cocycles of degree zero is a chain map.  
We will show in \refT{DiagramsSurjOnCohom} that $\phi$ induces a surjection onto the cohomology of $\Ba(\D(m))$ and hence the cohomology of the space of braids.  
Roughly, $\phi$ will send a combination of link diagrams to one where every component lies in a ``time slice'' $\R^{n}$, by killing some diagrams and modifying others.  
Though the precise definition of $\phi$ below is lengthy, the idea is simple:
\begin{itemize}
\item 
Kill non-forests, i.e.~diagrams with loops of edges.
\item
Collapse each arc between a pair of legs (i.e.~leaves) of a tree.  Technically, this operation also requires multiplying by a certain fractional coefficient.
\item
View the result as the (sum of all the) braid diagram(s) represented by the resulting picture, where if the picture does not look like a braid diagram at all, we send it to 0.
\end{itemize}
The image of a cocycle of link diagrams will have at most as many terms as the original cocycles.  In \refP{HtpyDiagramsAreGood}, we will see that equality holds for those cocycles corresponding to Milnor homotopy invariants.

\begin{defin}
\label{D:LinksToBraidsMap}
Define a linear map $\phi: \LD(m) \to B(\D(m))$ first on connected diagrams $\Gamma$, i.e.~those with just one component:
\begin{itemize}[leftmargin=1.0in]
\item[(Quotient 1)]
Send to zero each $\Gamma$ which has a closed path of edges (none of which are arcs), including possibly a self-loop.
\item[(Collapse)]
For any other connected diagram $\Gamma$, first let $\overline{\Gamma}$ be the result of collapsing each arc between segment vertices $s_1,s_2$ on the same component.  
The result can clearly be viewed as an unoriented diagram in $\D(m)$.
Then define 
$$\phi(\Gamma):=\frac{1}{k_1 ! \cdots k_m !} \overline{\Gamma},$$
where $k_i$ is the number of segment vertices on the $i$th strand.
\item[(Orientations)]
For a connected $\Gamma$ with $\phi(\Gamma)\neq 0$, define the labeling (i.e.~orientation) on $\phi(\Gamma)$ as follows. 
As mentioned just after \refD{LDOrientation}, we may suppose that $\Gamma$ has a representative labeling where the segment vertices are labeled first, in order, by their segment and the segment orientation.
In either parity, this canonically determines a labeling of the result of the collapses of arcs above.  Since $\phi(\Gamma) \neq 0$, $\Gamma$ is a tree, and this guarantees that an orientation of odd type (free vertex ordering and edge orientations) canonically determines an orientation of even type (edge ordering).  For details, see the Appendix.
\end{itemize}
This concludes the definition of $\phi$ for connected $\Gamma$.  Now we define $\phi$ on all of $\LD$:
\begin{itemize}[leftmargin=1.0in]
\item[(Quotient 2)]
Send to zero each $\Gamma$ in which the order of the segment vertices on each segment does \emph{not} induce a partial order on the components, in the following sense.  If $c \neq c'$ are components of $\Gamma$ and $c \to c'$ means that on some segment $i$, there are vertices $s<s'$ in $c,c'$ respectively, then the existence of a directed cycle of components $c_1 \to \cdots \to c_k \to c_1$ for some $k \geq 2$ is precisely the condition for $\Gamma$ to be sent to zero.  We will call such a diagram $\Gamma$ \emph{order-violating}.  For instance, the diagrams in Examples \ref{PhiExamples} (e) and (f) are order-violating.
\item[(Orientations)]
For $\Gamma$ with $\phi(\Gamma) \neq 0$, choose a total order on the components of $\Gamma$ which respects the partial order on components induced by the segment vertices.  
Then relabel $\Gamma$ so as to respect this total order.
If the new labeling gives the opposite orientation from the original one on $\Gamma$, then multiply by $-1$.  It is clear that this labeling induces a labeling of each component as an element of $\D(m)$.
\item[(Shuffle)]
Finally, consider all possible total orders of the components of $\Gamma$ which respect the partial order induced by the segment vertices.  
Define $\phi$ by sending $\Gamma$ to all the shuffles in $\Ba(\D(m))$ of its components corresponding to these total orders, where each shuffle appears with the labeling as induced above and with the sign given in \eqref{E:BarShuffle}.  See Example \ref{PhiExamples} (a) below for an illustration (without signs).
\end{itemize}
For brevity, we will refer to the relations (Quotient 1) and (Quotient 2) as (Q1) and (Q2) respectively.  We will call diagrams that are sent to zero by these quotients as \emph{bad diagrams} and all other diagrams \emph{good diagrams}.
\end{defin}

\begin{example}
\label{PhiExamples}
Below are some examples of the map $\phi$, where labelings (and thus signs) are unspecified.  Almost all of these diagrams are in the defect zero subspace, which will be of particular interest. 

(a) \begin{align*}
\qquad \qquad \qquad \raisebox{-2.3pc}{\includegraphics[scale=0.2]{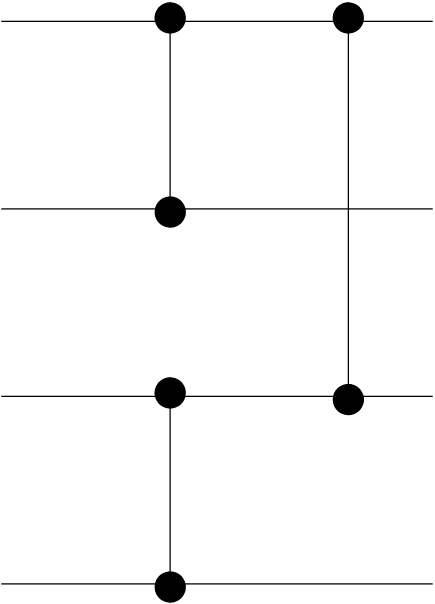}}
\qquad &\mapsto \qquad
\pm \quad \raisebox{-2.3pc}{\includegraphics[scale=0.2]{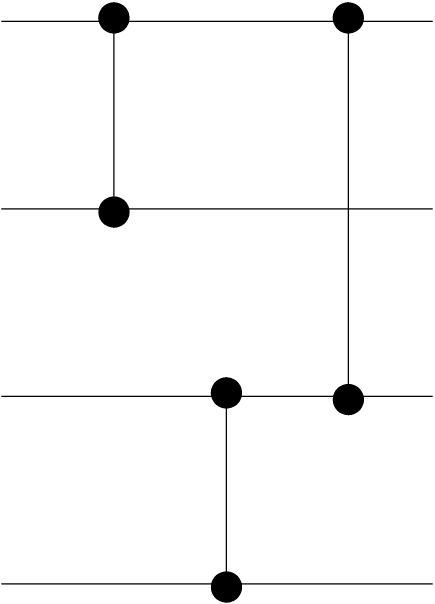}}
\quad \pm \quad \raisebox{-2.3pc}{\includegraphics[scale=0.2]{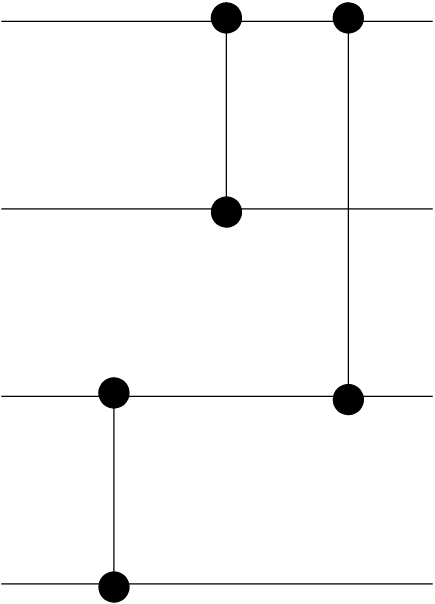}}
\end{align*}
(b) \begin{align*}
\raisebox{-1.4pc}{\includegraphics[scale=0.2]{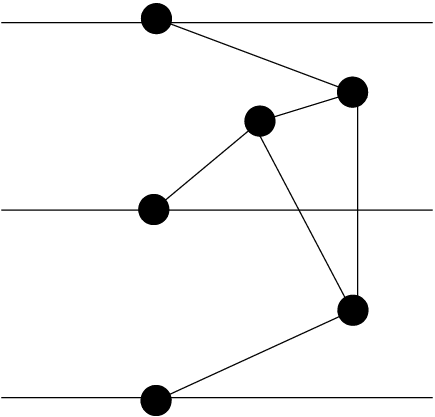}} \qquad &\mapsto \qquad 0 \qquad \qquad \qquad 
\end{align*}
(c)
\begin{align*}
\raisebox{-1.4pc}{\includegraphics[scale=0.2]{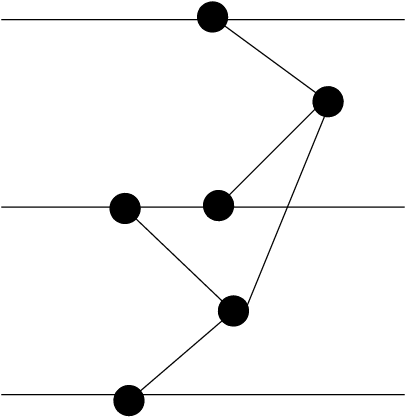}} 
\qquad &\mapsto \qquad
\pm \, \frac{1}{2} \, \raisebox{-1.4pc}{\includegraphics[scale=0.2]{triangle.eps}}  \,\,
\end{align*}
(d)
\begin{align*}
\, \raisebox{-1.4pc}{\includegraphics[scale=0.2]{triangle.eps}} \qquad &\mapsto \qquad 0  \qquad \qquad \qquad 
\end{align*}
(e)
\begin{align*}
\raisebox{-1.4pc}{\includegraphics[scale=0.2]{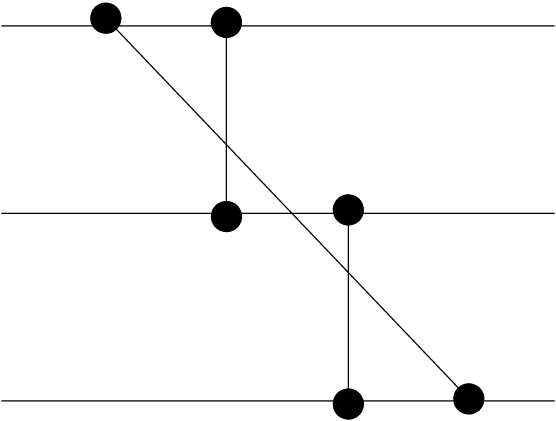}} \qquad  &\mapsto \qquad 0 \qquad \qquad \qquad \quad
\end{align*}
(f)
\begin{align*}
\raisebox{-2.9pc}{\includegraphics[scale=0.4]{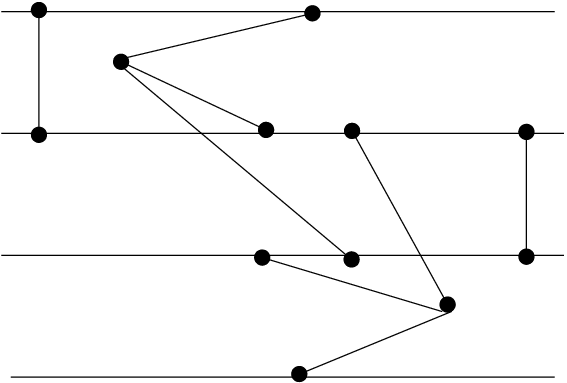}} \qquad  &\mapsto \qquad 0 \qquad \qquad \qquad \qquad \qquad \qquad 
\end{align*}
(g)
\begin{align*}
\raisebox{-2.9pc}{\includegraphics[scale=0.4]{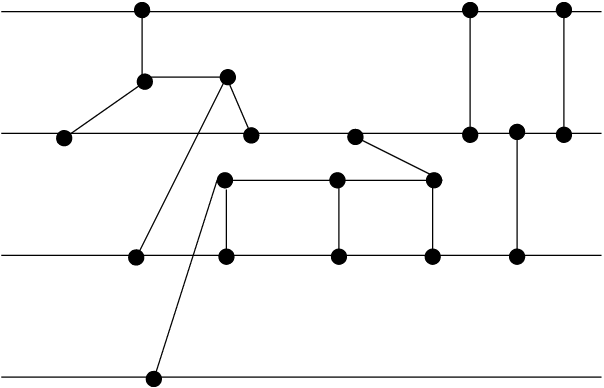}} 
\qquad &\mapsto \qquad
\pm \, \frac{1}{2! \, 3!} \, \raisebox{-2.9pc}{\includegraphics[scale=0.4]{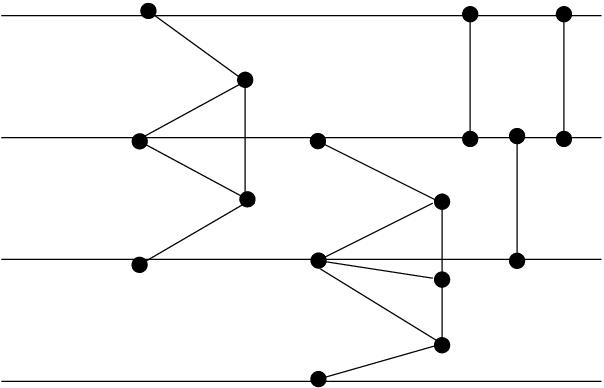}}
\end{align*}
\end{example}

It is not difficult to verify the following statement.

\begin{prop}
The map $\phi$ is a map of Hopf algebras.
\qed
\end{prop}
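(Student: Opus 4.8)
The plan is to verify separately that $\phi$ respects the unit, the counit, the product, and the coproduct. Since both $\LD(m)$ and $\Ba(\D(m))$ are connected graded bialgebras (graded by the total degree, which $\phi$ preserves by \refP{PhiPreservesDegreeEtc}) and $\phi$ is a graded map, compatibility with the antipode is then automatic, so checking the bialgebra structure suffices. Throughout I would work from the description of $\phi$ extracted from \refD{LinksToBraidsMap}: $\phi$ vanishes on bad diagrams, and on a good diagram $\Gamma$ with components $c_1,\dots,c_\ell$ and induced partial order $P_\Gamma$ one has
\[
\phi(\Gamma) = \sum_{\tau} \pm\, [\,\phi(c_{\tau(1)}) \mid \cdots \mid \phi(c_{\tau(\ell)})\,],
\]
where the sum runs over linear extensions $\tau$ of $P_\Gamma$ and $\phi(c_j) = \tfrac{1}{\prod_i k_i^{(j)}!}\,\overline{c_j}$ is the collapsed $j$th component with its fractional coefficient, $k_i^{(j)}$ being the number of segment vertices of $c_j$ on strand $i$. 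The units are the empty diagrams and are plainly matched, and both counits are projection onto degree zero (the $p=0$ part of $\Ba(\D(m))$), on which $\phi$ is the identity; these two cases are immediate.

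For the product I would first reduce to good $\Gamma_1,\Gamma_2$: if either contains a closed path of edges or a directed cycle of components, then so does every shuffle term $\Gamma_1 \cdot_\sigma \Gamma_2$ (shuffling segment vertices alters neither the edges nor the relative order of the vertices within a single factor), so both $\phi(\Gamma_1 \bullet \Gamma_2)$ and $\phi(\Gamma_1)\ast\phi(\Gamma_2)$ vanish. For good $\Gamma_1,\Gamma_2$ with component sets $C_1,C_2$, the key observations are: (i) the components of $\Gamma_1 \cdot_\sigma \Gamma_2$ are the disjoint union $C_1 \sqcup C_2$, and their collapses $\phi(c)$ are independent of $\sigma$; (ii) whether two components are comparable in the induced order depends only on whether they meet a common strand, not on $\sigma$, and $\sigma$ merely chooses the direction of each comparable pair. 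Since (Q2) kills exactly the shuffles that interleave two components along one strand, the surviving shuffles are those whose vertices are grouped by component on every strand. I would then check that sending a good pair $(\sigma,\tau)$, with $\tau$ a linear extension of the order on $C_1 \sqcup C_2$ induced by $\sigma$, to the total order $\tau$ sets up a bijection onto the total orders of $C_1 \sqcup C_2$ that restrict to a linear extension of $P_{\Gamma_1}$ and of $P_{\Gamma_2}$: given such a $\tau$, the requirement that components sit grouped in $\tau$-order on each strand determines $\sigma$ uniquely, and conversely any such $\tau$ arises this way. These total orders are precisely the interleavings produced by the bar shuffle in $\phi(\Gamma_1)\ast\phi(\Gamma_2)$, and the per-component coefficients agree factor by factor, so the two sums coincide term by term.

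For the coproduct, both operations are deconcatenation, and I would match them through the order-ideal/prefix correspondence. Cutting a bar word $[\phi(c_{\tau(1)}) \mid \cdots \mid \phi(c_{\tau(\ell)})]$ after the $i$th entry produces a down-set $D=\{c_{\tau(1)},\dots,c_{\tau(i)}\}$ of $P_\Gamma$ (every prefix of a linear extension is down-closed) together with linear extensions of $P_D$ and of $P_{C\setminus D}$; this is exactly the data of a term $\Gamma|_D \otimes \Gamma|_{C\setminus D}$ of $\Delta(\Gamma)$ after applying $\phi$ to each factor. Summing over linear extensions $\tau$ and cut points on one side thus corresponds bijectively to summing over down-sets $D$ and over linear extensions of the two induced orders on the other, which gives $\Delta\circ\phi=(\phi\otimes\phi)\circ\Delta$ once signs are accounted for.

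The genuine content, and the step I expect to be the main obstacle, is the bookkeeping of orientations (signs). On the $\LD(m)$ side these arise from vertex-labels/edge-orientations or edge-orderings together with the (Orientations) and (Shuffle) sign rules of \refD{LinksToBraidsMap}, while on the $\Ba(\D(m))$ side they arise from the bar-shuffle sign $(-1)^{\tau}$ of \eqref{E:BarShuffle} and from deconcatenation. Showing that these agree under the two bijections above is where the orientation comparison of the Appendix (the passage between odd-type and even-type orientations used in the (Orientations) step) does the real work; the underlying set-level combinatorics of regrouping components into time-ordered slices is, by contrast, routine.
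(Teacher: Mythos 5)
The paper offers no argument for this proposition at all --- it is asserted with ``It is not difficult to verify'' --- so there is no proof of record to compare your route against; what you have written is the natural direct verification, and its combinatorial skeleton is sound. Three remarks. First, (Q2) kills more than the shuffles that interleave two components along a single strand: it also kills shuffles that are grouped by component on every strand but order the same pair of components inconsistently on two different strands (a directed $2$-cycle $c \to c' \to c$ arising from two strands). Your bijection between pairs $(\sigma,\tau)$ and total orders still goes through, since an order-violating $\sigma$ admits no linear extension and hence contributes no pairs, but the sentence as written is inaccurate. Second, for the coproduct you should also dispose of bad diagrams explicitly: if $\Gamma$ carries a directed cycle of components, every admissible deconcatenation must place the entire cycle on one side (a down-set meeting a cycle contains all of it), so both sides of $\Delta\circ\phi=(\phi\otimes\phi)\circ\Delta$ vanish; likewise a non-tree component survives intact into one tensor factor and is killed there. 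Third --- and this is the only substantive omission --- the sign verification is genuinely not carried out. You correctly identify it as the real content: one must check that the sign produced by the (Orientations) and (Shuffle) rules of \refD{LinksToBraidsMap}, together with the odd/even orientation conversion of the Appendix, matches the bar-shuffle and deconcatenation signs of \eqref{E:BarShuffle} and \eqref{E:CoproductOnBar} under your bijections. Until that is done the argument is an outline rather than a proof, though it is evidently the outline the authors intend. A last small point: for $n=2$ the total degree does not make $\LD(m)$ connected as a graded bialgebra (the degree-zero part is large), so for the automatic antipode compatibility you should grade by order instead, which $\phi$ also preserves by \refP{PhiPreservesDegreeEtc} and which is positive on every nonempty diagram.
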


The next proposition is convenient for organizational purposes.

\begin{prop}
\label{P:PhiPreservesDegreeEtc}
The map $\phi$ preserves any subspace given by a fixing a defect, order, or (total) degree of the diagrams.
\end{prop}

In other words, $\phi$ preserves the defect, order, and total degree of any diagram except for those in its kernel (since we don't define  those three quantities for the element $0$).

\begin{proof}
For each of these three quantities, the value on $\Gamma \in \LD(m)$ is the sum of the values on the components of $\Gamma$.
Similarly, the value on a braid diagram $[\Gamma_1 | \cdots | \Gamma_p]$ is the sum of the values on each $[\Gamma_i]$, where we interpret the total degree of such a term to be $|\Gamma_i| - 1$.  Since every link diagram is sent to shuffles of its connected components (or $0$), it suffices to check the statement in the case that $\Gamma \in \LD(m)$ is connected.

The fact that $\phi$ preserves order follows from the agreement of the definitions of order in the two settings and the fact that $\phi$ preserves the numbers of edges and free vertices.  
The defect of $\Gamma  \in \LD(m)$ is
\begin{equation}
\label{E:LinkDefect}
2|E(\Gamma)| - 3|V_{\mathrm{free}}(\Gamma)| - |V_{\mathrm{seg}}(\Gamma)|
\end{equation}
while the defect of an element $[\Gamma] \in \Ba(\D(m))$ is
\begin{equation}
\label{E:BraidDefect}
|E(\Gamma)| - 2|V_{\mathrm{free}}(\Gamma)| - 1.
\end{equation}
The only connected link diagrams $\Gamma$ which map to nonzero elements under $\phi$ are trees, and we use again the fact that $\phi$ preserves the numbers of edges and free vertices.  For a tree $\Gamma$, one can then check that indeed \eqref{E:LinkDefect} and \eqref{E:BraidDefect} agree, since a tree has Euler characteristic 1 and since $|V(\Gamma)|=|V_{\mathrm{free}}(\Gamma)| + |V_{\mathrm{seg}}(\Gamma)|$.
\end{proof}

Now $\phi$ is not a chain map in general, since contracting arcs in a (bad) diagram sent to zero by (Q2) may produce a (good) diagram whose image under $\phi$ is nonzero.  
This can be seen using any diagram where two components cross each other as in Example \ref{PhiExamples}(f).  
Nonetheless, in \refP{PhiChainMapOnZ0}, we will see that its restriction to cocycles in the following subspace will be a chain map.

\begin{defin}
\label{D:LDfDef}
Let $\LD_{\mathrm{f}}(m)$ denote the subspace of $\LD(m)$ spanned by diagrams $\Gamma$ satisfying the following conditions:
\begin{enumerate}
\item $\Gamma$ is a forest, in the sense that all its connected components are trees; and
\item if a component $c$ of $\Gamma$ touches only one strand, then some segment vertex of another component $c'$ must lie between the first and last segment vertices of $c$ (using the order on the segment).
\end{enumerate}
\end{defin}

See Figure \ref{F:LDfFig} for some examples illustrating these conditions.
Condition (2) above will be needed only later, in \refS{Integrals}, for a technicality related to Bott--Taubes configuration space integrals.

\begin{figure}[h!]
(a) \includegraphics[scale=0.45]{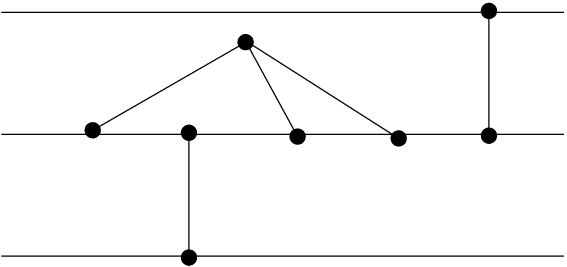}
\qquad \qquad
(b) \includegraphics[scale=0.45]{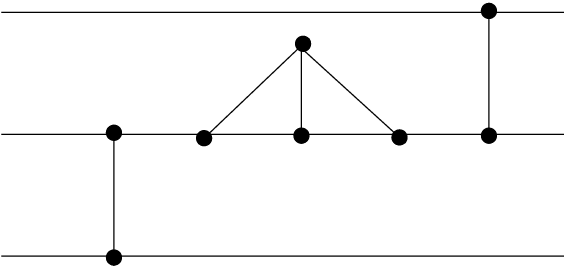}
\caption{Examples of (a) a diagram which satisfies both conditions in Definition \ref{D:LDfDef} and (b) a diagram which satisfies condition (1) but not condition (2).}
\label{F:LDfFig}
\end{figure}

We are particularly interested in $\ZZ^{0,*}(\LD_{\mathrm{f}}(m))$, the cocycles of defect zero (and any order) in $\LDf(m)$.  
This space $\ZZ^{0,*}(\LD_{\mathrm{f}}(m))$ is spanned by trivalent forests.  
Furthermore, its image in cohomology $\Ho^{0,*}(\LD_{\mathrm{f}}(m))$ is isomorphic to $\ZZ^{0,*}(\LD_{\mathrm{f}}(m))$ itself, since there are no cocycles of negative defect in the link diagram complex $\LD$.

Before proceeding with generalities, we mention two examples of cocycles of defect zero.  
One example is $\mu_{123}$ in \eqref{E:mu_123}, which represents the Milnor triple linking number.  From a graphical perspective, the map $\phi$ is essentially the identity on $\mu_{123}$, a fact generalized to all Milnor homotopy invariants in \refP{HtpyDiagramsAreGood} below.  The next example is however more nuanced.

\begin{example}
\label{3StrandsOrder3}
 As one can compute using the 4T relation, the space of finite type 3 invariants for 3-strand braids is two-dimensional.  
For one basis element, one can take the sum of braid chord diagrams 
\begin{equation}
\label{E:ChordsForType3Invt}
\raisebox{-1.4pc}{\includegraphics[scale=0.2]{LLM.eps}} - 
\raisebox{-1.4pc}{\includegraphics[scale=0.2]{LLR.eps}} + 
\raisebox{-1.4pc}{\includegraphics[scale=0.2]{LMR.eps}} - 
\raisebox{-1.4pc}{\includegraphics[scale=0.2]{LRR.eps}} + 
\raisebox{-1.4pc}{\includegraphics[scale=0.2]{MRR.eps}}  
\end{equation}
where the signs shown are for even $n$.
This sum corresponds to only the terms in $\Ba(\D(3))$ which survive under the map $\overline{I}$ to cohomology.  One can deduce that the full cocycle $\beta$ in $\Ba(\D(3))$ also includes the terms (up to sign)
\[
\pm 
\raisebox{-1.4pc}{\includegraphics[scale=0.2]{LT.eps}} \pm 
\raisebox{-1.4pc}{\includegraphics[scale=0.2]{TR.eps}} \pm 
\raisebox{-1.4pc}{\includegraphics[scale=0.2]{triangle.eps}}.
\]
There is a corresponding cocycle $\alpha$ in $\LD(3)$ (as will be guaranteed by \refP{PhiChainMapOnZ0}).  
The terms in $\alpha$ mapping to the third diagram above are the two trees with shuffles of leaves on strand 2, each with coefficient $\pm1$.  In addition to the obvious lifts to $\LD$ of the two diagrams with a tripod and a chord, there are two further diagrams in $\alpha$ involving the tripod and a chord (respectively, between strands 1 and 2 and strands 2 and 3); each of these diagrams has coefficient $\pm 1$.  The remainder of the terms are chord diagrams, including those in \eqref{E:ChordsForType3Invt} as well as others, determined by the STU relations, the coefficients of the diagrams with free vertices, and the coefficients of the  chord braid diagrams.  
This example thus illustrates how the map $\phi$ simplifies configuration space integral formulas for braid invariants.
\qed
\end{example}

We now show that the restriction of $\phi$ to $\ZZ^{0,*}(\LD_{\mathrm{f}}(m))$ is a chain map.  (For this restriction, the relation (Q1) becomes vacuous.)  By abuse of notation, we also let $\phi$ denote this restriction.

\begin{prop}
\label{P:PhiChainMapOnZ0}
The restriction of $\phi$ to $\ZZ^{0,*}(\LD_{\mathrm{f}}(m))$ satisfies $d \phi = \phi d$.
Thus its image is contained in the space $\ZZ^{0,*}(\Ba(\D(m)))$ of cocycles of defect zero.
\end{prop}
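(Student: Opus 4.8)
The plan is to prove the identity $d\phi=\phi d$ on $\ZZ^{0,*}(\LDf(m))$ by matching the two differentials term by term through $\phi$. Since every $\gamma\in\ZZ^{0,*}(\LDf(m))$ is a cocycle, $\phi(d\gamma)=0$, so the asserted identity is equivalent to the closedness statement $d\phi(\gamma)=0$; and $\phi(\gamma)$ automatically has defect zero by \refP{PhiPreservesDegreeEtc}, so once it is closed it lands in the claimed space $\ZZ^{0,*}(\Ba(\D(m)))$. First I would decompose each differential: on $\LD(m)$ the differential $d$ is the signed sum of contractions of non-chord edges and of arcs, while on $\Ba(\D(m))$ the total differential splits as $\delta_1+\delta_2$, where $\delta_1$ applies the internal differential of $\D(m)$ (contraction of non-chord edges) in each bar slot and $\delta_2$ is the bar differential merging adjacent slots via the product of $\D(m)$ (the outer faces vanishing since every slot has positive degree).

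The core is the following dictionary, to be checked for a single \emph{good} (non-order-violating) forest $\Gamma$. Contractions of non-chord edges match $\delta_1\phi$ exactly, because collapsing arcs commutes with contracting a non-chord edge and leaves the numbers $k_i$ of segment vertices per strand unchanged, so the coefficient $1/(k_1!\cdots k_m!)$ is preserved. Contraction of an arc whose endpoints lie in the same component produces a closed loop of edges and is therefore killed by (Q1), so such terms contribute nothing. Contraction of an arc between two different components $c,c'$ matches a merge term of $\delta_2\phi$: here the fractional coefficients are essential, and I would verify that summing $\phi(\Gamma/a)$ over the arcs $a$ joining $c$ and $c'$ (one per shared strand) reproduces the single product $\bar c\cdot\bar c'$ occurring in $\delta_2\phi(\Gamma)$, with the factorials $1/k_i!$ accounting precisely for this multiplicity. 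Finally, a merge of two incomparable components is produced by $\delta_2$ in both orders; these cancel by graded-commutativity of the $\D(m)$-product together with the shuffle signs. This establishes that $\phi$ is a genuine chain map on the span of good forest diagrams.

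It remains to account for order-violating diagrams, which is exactly where $\phi$ fails to commute with $d$ in general: a diagram sent to zero by (Q2) may have an arc contraction that resolves the violation into a good diagram with nonzero image. Writing $\gamma=\gamma_{\mathrm{good}}+\gamma_{\mathrm{bad}}$ and using the chain-map property just established together with $d\gamma=0$, one obtains $d\phi(\gamma)=d\phi(\gamma_{\mathrm{good}})=\phi(d\gamma_{\mathrm{good}})=-\phi(d\gamma_{\mathrm{bad}})$, so the whole proposition reduces to showing $\phi(d\gamma_{\mathrm{bad}})=0$.

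I expect this last reduction to be the main obstacle: one must show that the good diagrams obtained by resolving the order-violations in $\gamma_{\mathrm{bad}}$ occur in canceling families, and this is where the cocycle (STU) relations satisfied by $\gamma$ enter essentially — the STU relation in the source is precisely what forces these resolved contributions to cancel. Alongside this, the second source of difficulty is the sign bookkeeping throughout the dictionary, matching the edge/vertex labelings and orientation conventions of \refD{DmOrientations} and \refD{LDOrientation} in both parities, for which I would rely on the orientation comparison established in the Appendix.
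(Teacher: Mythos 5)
Your reduction of the statement to $d\phi(\gamma)=0$, and the observation that \refP{PhiPreservesDegreeEtc} then places the image in defect zero, are both fine and agree with the paper. The genuine gap is your central claim that $\phi$ is a chain map on \emph{individual} good forest diagrams, from which you deduce $d\phi(\gamma)=-\phi(d\gamma_{\mathrm{bad}})$ and relocate all use of the cocycle condition to the bad part. That claim is false, and the coefficient check you defer is exactly where it breaks. Take a good $\Gamma$ with two comparable components $c,c'$ sharing a single strand, with $k$ and $k'$ leaves there. The bar differential applied to $\phi(\Gamma)=\frac{1}{k!\,k'!}[\bar c\,|\,\bar c']$ produces $[\bar c\,\bar c']$ with coefficient $\frac{1}{k!\,k'!}$, whereas $d\Gamma$ contains exactly one arc contraction joining $c$ to $c'$, whose image under $\phi$ carries the coefficient $\frac{1}{(k+k'-1)!}$; already for $k=k'=2$ these are $\tfrac14$ versus $\tfrac16$. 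So the ``dictionary'' does not close up diagram by diagram, and the identity $d\phi(\gamma_{\mathrm{good}})=\phi(d\gamma_{\mathrm{good}})$ on which your final reduction rests is unavailable.

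The paper's proof instead shows $d\phi(\gamma)=0$ directly and needs the cocycle (STU) condition already for the good diagrams, in two ways you are missing. First, STU forces all leaf shuffles of a diagram to appear in $\gamma$ with the same coefficient up to sign (Claim \ref{C:AllSegmentVShuffles}); it is only after summing over these $k_1!\cdots k_m!$ preimages that the fractional coefficients in the definition of $\phi$ do their job, both for contractions of free--free edges and for the merge terms. Second, for two comparable components sharing strands, a telescoping sum of STU relations yields an identity \eqref{E:GoodDiagramsSumToZero} of the form $\Gamma+\sum_{i,j,j'}S(i,j,j')+\Gamma'=0$ among coefficients of \emph{good} diagrams, where $\Gamma'$ has the two components swapped and the $S(i,j,j')$ are the diagrams joining them by an extra free vertex; the bar-merge contributions of $\phi(\Gamma)$ and $\phi(\Gamma')$ cancel against the free-vertex-to-segment-vertex edge contractions inside the $\phi(S(i,j,j'))$ precisely because of this identity. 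In particular the free-to-segment contractions in $d\phi(\gamma)$ are not matched termwise with anything in $\phi(d\gamma)$ at all. Your instinct that STU is the essential input is right, but it enters to tie together the coefficients of the good diagrams ($\Gamma$, its leaf shuffles, $\Gamma'$, and the $S$-diagrams), not merely to kill resolved order-violating terms; as written, your argument has no mechanism to cancel the mismatched merge coefficients.
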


\begin{proof}
Let $\gamma \in \ZZ^{0,*}(\LD_{\mathrm{f}}(m))$.  
We have $d\gamma=0$, and we want to show that $d(\phi(\gamma))=0$.  
The terms appearing in $d\gamma$ come from edge and arc contractions.  The terms appearing in $d(\phi(\gamma))$ come from edge contractions and from identifying two time-slices (the bar differential).  

We will use the fact that the STU relation \eqref{E:STU} is a condition satisfied by the coefficients of any cocycle $\gamma$ of defect zero.   This is \cite[Proposition 3.29]{KMV:FTHoLinks}; see also \cite[Proposition 1]{Conant-Vogtmann:Morita-Vanish} for an alternative proof of essentially the same fact.

Given a diagram $\Gamma$, call a diagram $\Gamma'$ a \emph{leaf shuffle} of $\Gamma$ if $\Gamma'$ is obtained from $\Gamma$ by a rearrangement of the leaves on each segment (and a possible relabeling, i.e.~up to orientation/sign).
The STU condition and the fact that all the diagrams in $\gamma$ are forests establishes the following claim.
\begin{claim}
\label{C:AllSegmentVShuffles}
If $\gamma \in \ZZ^{0,*}(\LD_{\mathrm{f}}(m))$ and the diagram $\Gamma'$ is a leaf shuffle of $\Gamma$, then the coefficients of $\Gamma$ and $\Gamma'$ in $\gamma$ agree up to sign.  Thus if $\Gamma$ appears in $\gamma$, then so do all its leaf shuffles, and with the same coefficient (up to sign).
\end{claim}

For example, if $\gamma$ has one of the two diagrams below with coefficient $c$, then the other appears with a coefficient $\pm c$:
\[
\includegraphics[scale=0.3]{tree-4-leaves.eps}
\qquad \qquad \qquad
\includegraphics[scale=0.3]{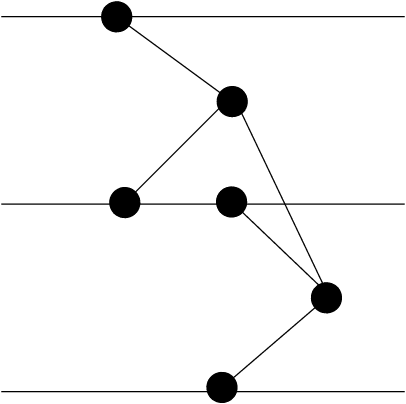}
\]

To show that $d(\phi(\gamma))=0$, we will show that
\begin{itemize}
\item[(1)] the contributions from contracting edges between free vertices sum to zero; 
\item[(2)] the remaining contributions sum to zero.
\end{itemize}

To use the fact that $d \gamma =0$, we set some notation.  First, suppose $\overline{\Gamma} \in \Ba^{-p,*}(\D(m))$ is a braid diagram appearing in $\phi(\gamma)$.  For a permutation $\sigma \in \Sigma_p$, let $\sigma \overline{\Gamma}$ be the result of permuting the factors of $\overline{\Gamma}$ by $\sigma$. 
Then all the shuffles $\sigma\overline{\Gamma}$ which give rise to the same partial order of the components of $\overline{\Gamma}$ (using the orientation of the segments) must appear in $\phi(\gamma)$, as in Example \ref{PhiExamples} (a) above.  
Thus $\sum_\sigma \sigma\overline{\Gamma}$ appears in $\phi(\gamma)$ (with some coefficient), where the sum is taken over all such partial-order-preserving shuffles.

For step (1), consider contractions of edges between free vertices in $\phi(\gamma)$.
We claim that such terms in $\phi(\gamma)$ are in correspondence with edge contraction terms from good diagrams in $\gamma$.  Indeed, given an edge $e$ between free vertices, every term $\overline{\Gamma}/e$ in $d \phi(\gamma)$ is part of a sum $\sum_\sigma \sigma(\overline{\Gamma}/e)$.
Then Claim \ref{C:AllSegmentVShuffles} implies that each term $\sum_\sigma \sigma(\overline{\Gamma}/e)$ in $d \phi(\gamma)$ is matched by an edge contraction in each of $k_1! \cdots k_m!$ diagrams in $\gamma$ that map to $\sum_\sigma \sigma\overline{\Gamma}$ under $\phi$.  The division by $k_1! \cdots k_m!$ in the definition of $\phi$ establishes the correspondence.  (This is the only part where we use the forest condition to show that this restriction of $\phi$ is a chain map.)  
Now note that an edge contraction in a bad diagram yields another bad diagram.
This implies that all the terms in $d \phi(\gamma)$ involving the contraction of an edge between free vertices sum to zero.

For step (2), consider first a term in $d \phi(\gamma)$ from one of the summands in the bar differential.  
(We will ultimately treat the contraction of an edge between a free vertex and segment vertex in case (b) below.)
By definition, a diagram in $\phi(\gamma)$ has only one component in each bar factor, so there are two possibilities for a term in $d \phi(\gamma)$: either 
\begin{itemize}
\item[(a)] it has two components on disjoint sets of strands in some time-slice or 
\item[(b)] it has the contraction to one component of two components $\overline{\Gamma}_i$ and $\overline{\Gamma}_{i+1}$ in $\phi(\gamma)$, where there at least one arc between a vertex on 
$\overline{\Gamma}_i$ and a vertex on $\overline{\Gamma}_{i+1}$
\end{itemize}

In case (a), the shuffle in the definition of $\phi$ ensures that such contributions cancel in pairs.  

For case (b), let $\overline{\Gamma} = [\overline{\Gamma}_1 | \dots | \overline{\Gamma}_p]$ be a braid diagram in $\phi(\gamma)$ with components $\overline{\Gamma}_t$ and $\overline{\Gamma}_{u}$ that are contracted together in some term of $d \overline{\Gamma}$.  Thus $|t-u|=1$, and without loss of generality, $u=t+1$.  
Find $\Gamma \in \LD(m)$ with $\phi(\Gamma)=\sum_\sigma \sigma\overline{\Gamma}$ as above.  Then $\Gamma$ is necessarily a good diagram.  
Let $\Gamma_t$ and $\Gamma_{u}$ be the components in $\Gamma$ corresponding to $\overline{\Gamma}_t$ and $\overline{\Gamma}_{u}$ respectively, and let $\Gamma'$ be the (good) link diagram with the order of $\Gamma_t$ and $\Gamma_{u}$ swapped.  (That is, to obtain $\Gamma'$ from $\Gamma$, one swaps segment vertices of $\Gamma_t$ and $\Gamma_u$ until on each segment, all the segment vertices of $\Gamma_u$ precede those of $\Gamma_t$.)  
Suppose $\Gamma_t$ and $\Gamma_{u}$ have respectively $k_i$ and $k'_i$ leaves on the $i$th strand.  
We will consider all the diagrams from iterating STU relations which involve swaps of leaves of $\Gamma_t$ and $\Gamma_{u}$ (the diagrams T and U, or possibly U and T) and a third diagram where $\Gamma_t$ and $\Gamma_{u}$ are joined into one component by adding a free vertex (the S diagram).  
A priori, these STU relations involve some bad diagrams.  Our goal is to combine these relations to get relations involving only good diagrams.  
By Claim \ref{C:AllSegmentVShuffles}, the coefficient of any link diagram 
agrees with that of any other link diagram 
which differs by only a shuffle of leaves on the same component along a given strand (and has the appropriate orientation).  Thus we may consider diagrams up to such leaf shuffles on the $i$-th strand.  
(For example, up to leaf shuffles, there are $\sum_{i=1}^m k_i k'_i$ diagrams of type S, by keeping track of only the order of the segment vertices of $\Gamma_t$ and $\Gamma_u$.)  

Without loss of generality (by relabeling), we may assume that the strands with vertices from both components are labeled $1,2,3,...,\ell \leq m$.
We start on strand $1$.  By repeatedly using the STU relation on strand $1$ and combining these relations into a telescoping sum, one can ``move each of the $k_{1}$ leaves of $\Gamma_t$ on strand 1 past each of the $k'_{1}$ leaves of $\Gamma_u$ on strand 1.''  More precisely, we obtain the vanishing of the sum (with the appropriate signs) of the coefficient of $\Gamma$, the coefficients of $k_{1} k'_{1}$ diagrams of type S (call them $S(1,1,1), ..., S(1,k_1, k'_1)$), and a diagram $\Gamma''$ with all the $k_{1}$ leaves moved past all the $k'_{1}$ leaves.  Abusing notation and representing the coefficients by the same symbols as the diagrams themselves, we may write
\[
\Gamma + \sum_{(j,j')=(1,1)}^{k_1,k'_1} S(1, j, j') + \Gamma'' =0
\]
where signs have also been omitted.
The last term $\Gamma''$ is a bad diagram, but by applying the same procedure, we may replace it by a sum of $k_{2} k'_{2} \ $ diagrams of type S and a bad diagram with $k_1 + k_2$ leaves moved past $k'_1 + k'_2$ leaves.  Continuing, we obtain the vanishing of the sum (with appropriate signs) of the coefficient of $\Gamma$, the coefficients of $\sum_i k_i k'_i$ diagrams of type S, and the coefficient of $\Gamma'$:
\begin{equation}
\label{E:GoodDiagramsSumToZero}
\Gamma + \sum_{i=1}^\ell \sum_{(j,j')=(1,1)}^{k_i,k'_i} S(i, j, j') + \Gamma' =0
\end{equation}
Moreover, all of these diagrams are good.
Up to the equivalence relation of leaf shuffles, these are precisely the diagrams whose images under $\phi$ contribute a term 
$c [\Gamma_1| \dots | \Gamma_t \Gamma_u | \dots | \Gamma_p ]$ (where $c$ is some coefficient).  
Again, the division by factorials in the definition of $\phi$ guarantees that the coefficient of each equivalence class in $\phi(\gamma) \in \Ba(\D)$  coincides with the coefficient of any representative in $\gamma \in \LD(m)$.  
Thus all terms of the form 
$c [\Gamma_1| \dots | \Gamma_t \Gamma_u | \dots | \Gamma_p ]$ combine to zero in $d\phi(\gamma)$.  
This accounts for all terms in $d\phi(\gamma)$ where two time slices are identified.  It also accounts for all terms in $d\phi(\gamma)$ where an edge from a free vertex to a segment vertex is contracted in some diagram $\Gamma_S$: any such contribution must be cancelled by some 
$c [\Gamma_1| \dots | \Gamma_t \Gamma_u | \dots | \Gamma_p ]$, in which case $\Gamma_S = \phi(S(i,j,j'))$ for some $S(i,j,j')$ as in \eqref{E:GoodDiagramsSumToZero}.
\end{proof}

We can now finally address the left vertical map in our main square \eqref{E:MainCommutativeSquare}.  
Write $\Ho^{*,*}(\Ba(\D(m)))$ for the cohomology of $\Ba(\D(m))$, bigraded by defect and order, just like $\Ho^{*,*}(\LD_{\mathrm{f}}(m))$.  (Recall that defect and order differ from the original bigrading on $\Ba(\D(m))$.)

\begin{thm}
\label{T:DiagramsSurjOnCohom}
The map $\phi$ induces a surjection $\xymatrix{\Ho^{0,*}(\LD_{\mathrm{f}}(m)) \ar@{->>}[r] & \Ho^{*,*}(\Ba(\D(m))) }$ in cohomology.
\end{thm}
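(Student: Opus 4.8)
The plan is to reduce the statement to a classical fact about Vassiliev invariants in dimension three. First I would record the formal consequences of what is already proved. By \refP{BraidCohSpannedByDefectZero}, all of $\Ho^{*,*}(\Ba(\D(m)))$ is concentrated in defect zero, so it suffices to produce preimages for classes in $\Ho^{0,*}(\Ba(\D(m)))$. By \refP{PhiChainMapOnZ0} the restriction of $\phi$ to $\ZZ^{0,*}(\LDf(m))$ is a chain map landing in $\ZZ^{0,*}(\Ba(\D(m)))$, and since $\LD(m)$ has no cocycles of negative defect we have $\Ho^{0,*}(\LDf(m)) = \ZZ^{0,*}(\LDf(m))$; thus $\phi_*$ is well defined on cohomology and it remains to show it hits every defect-zero class. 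Finally, by \refT{BraidCohIsCDMod4T} a defect-zero class of $\Ba(\D(m))$ is determined by its braid chord diagram part, and since $\phi$ preserves the number of free vertices (\refP{PhiPreservesDegreeEtc}) the chord-diagram part of $\phi(\alpha)$ is the $\phi$-image of the chord-diagram part of $\alpha$. So the problem becomes: realize an arbitrary combination of braid chord diagrams satisfying the 4T and shuffle relations as the chord part of $\phi$ applied to some trivalent forest cocycle in $\LDf(m)$.

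Next I would identify both bigraded cohomologies with spaces of finite type invariants. Because the defect-zero parts are governed by the STU relation on the link side and the 4T and shuffle relations on the braid side, all of which are independent of $n$ up to signs, I may reduce to the classical case $n=2$. On the target, \refT{BraidCohIsCDMod4T} identifies $\Ho^{0,*}(\Ba(\D(m)))$ with $(\BCD(m)/(\mathrm{YB}))^*$, which by \refE{BraidCohIsVassilievSpace} is the associated graded of the space of Vassiliev invariants of the pure braid group $P_m$. On the source, the review of $\LD(m)$ identifies the defect-zero trivalent cocycles modulo STU with a space of Vassiliev invariants of long links in $\R^3$; the forest restriction is harmless, since $\phi$ annihilates every diagram containing a loop of edges, and a braid chord diagram lifts to a link chord diagram, which is itself a forest.

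I would then argue that $\phi_*$ coincides with the restriction of invariants induced by the inclusion $\iota$ of braids into long links. Concretely, on chord diagrams $\phi$ reads off a braid chord diagram from the positions of the chord endpoints along the strands, summed over all compatible orderings, with the factorial coefficients and the shuffle summation conspiring to reproduce the correct weight system; this is exactly the correspondence underlying restriction of invariants from long links to pure braids. Surjectivity of $\phi_*$ then follows from the classical fact that every Vassiliev invariant of pure braids is the restriction of a Vassiliev invariant of long links — equivalently, that a braid weight system satisfying 4T and shuffle can be completed, through the STU relation, to a trivalent forest cocycle in $\LDf(m)$ whose leading part recovers it. I expect this completion step to be the main obstacle: it is not a formal manipulation of the map $\phi$ but rests on the integrability of braid weight systems in dimension three, i.e.\ on the existence of a universal finite type invariant realizing all such weight systems, as in Kohno's work via Chen's integrals and the residual nilpotence of $P_m$. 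Combining this with the first paragraph, where the leading chord part both determines the class and is realized by $\phi$, yields the desired surjection.
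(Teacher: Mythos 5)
Your formal reductions (concentration of $\Ho^{*,*}(\Ba(\D(m)))$ in defect zero, $\phi$ being a chain map on $\ZZ^{0,*}(\LDf(m))$, and the compatibility of $\phi$ with the braid-chord-diagram part) match the paper's, and your overall strategy of reducing to a statement about finite type invariants in $\R^3$ is also the paper's. But the critical step is not covered by the facts you cite. What you actually need is that every braid weight system in $(\mathcal{BCD}_n(m)/\mathrm{YB})^*$ is the chord part of a trivalent \emph{forest} cocycle in $\LDf(m)$. Kohno's integrability theorem only tells you that such a weight system is realized by a finite type invariant of pure braids; it says nothing about extending that invariant to long links, and even granting such an extension (which would give a cocycle in $\Ho^{0,n}(\LD(m))$), there is no reason the resulting cocycle is supported on forests: $\Ho^{0,n}(\LD(m))$ contains genuinely non-forest classes (wheel-type diagrams, for instance), and your remark that the forest restriction is harmless only addresses the chord-diagram terms, not the higher-free-vertex terms needed to complete them through STU. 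The paper closes exactly this gap with a theorem of Habegger and Masbaum on the tree part of the Kontsevich integral: the composite $\Ho^{0,n}(\LDf(m)) \to \mathcal{BV}_n(m)/\mathcal{BV}_{n-1}(m)$ is surjective because the dual of the braid filtration quotient is identified with a quotient of the Milnor concordance filtration, and Milnor concordance invariants are precisely those arising from forest diagrams. Without this input (or an equivalent), your argument does not go through.

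Two secondary points. First, your identification of $\phi_*$ with restriction of invariants along $\iota$ is plausible but is neither proved by you nor needed by the paper; the paper instead assembles a commutative diagram out of purely diagrammatic maps (kill non-chord trivalent diagrams, then kill non-braid chord diagrams) together with the canonical invariants-to-weight-systems maps, and deduces surjectivity of $\phi$ from surjectivity of the composite around the other side of the square, using commutativity rather than any analytic identification. Second, $\LDf(m)$ excludes some forests by condition (2) of its definition; the paper notes that the excluded part corresponds to finite type concordance invariants of knots and hence does not affect surjectivity onto braid invariants, a point your sketch does not address.
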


\begin{proof}
\refP{PhiChainMapOnZ0} guarantees that the restriction of $\phi$ to defect zero cocycles induces a map on cohomology.  
By \refP{PhiPreservesDegreeEtc}, there is a further restriction map 
$$\Ho^{0,n}(\LD_{\mathrm{f}}(m)) \longrightarrow \Ho^{0,n}(\Ba(\D(m)))$$ between subspaces of diagrams of defect zero and order $n$.  
We know from \refP{BraidCohSpannedByDefectZero} that all the cohomology of the bar complex is concentrated in defect zero, so it will suffice to establish surjectivity for each $n$.
By another mild abuse of notation, we will simply refer to this restriction to bidegree $(0,n)$ of the induced map in cohomology as $\phi$.

We will use facts about finite type invariants for links and braids in $\R^3$.  
In particular, the surjectivity will be a consequence of diagram \eqref{E:ConcordanceDiagram} below, namely its commutativity, the isomorphisms on the right-hand vertical edge, and the surjectivity of the left-hand edge followed by the bottom row.
\begin{equation}
\label{E:ConcordanceDiagram}
\xymatrix{
\Ho^{0,n}(\LDf(m)) \ar^-\phi[rr] \ar@{^(->}[d]  & & \Ho^{0,n}(\Ba(\D(m))) \ar^-\cong[d] \\
\Ho^{0,n}(\LD(m)) \ar@{->>}^-\cong[r] \ar_-\int^\cong[d] & (\mathcal{CD}_n(m) /4T)^* \ar@{->>}[r] & (\mathcal{BCD}_n(m) /YB)^* \\
\mathcal{LV}_n(m) / \mathcal{LV}_{n-1}(m) \ar@{=}[r] & \mathcal{LV}_n(m) / \mathcal{LV}_{n-1}(m) \ar^-\cong[u] \ar@{->>}[r] & \mathcal{BV}_n(m) / \mathcal{BV}_{n-1}(m) \ar^-\cong[u]
}
\end{equation}

We first define the spaces involved.
\begin{itemize}
\item The spaces $\mathcal{CD}_n(m)$ and $\mathcal{BCD}_n(m)$ are respectively the spaces of chord diagrams and  braid chord diagrams with $n$ chords on $m$ strands.  The relations $YB$ are the Yang--Baxter Lie relations \eqref{E:Yang-Baxter1} and \eqref{E:Yang-Baxter2}.
\item 
The spaces 
$\mathcal{LV}_n(m)$ and $\mathcal{BV}_n(m)$ 
are respectively 
the space of type $n$ isotopy invariants of $m$-component long links and type $n$ isotopy invariants of $m$-component (pure) braids.  
\end{itemize}
We now explain the maps. 
\begin{itemize}
\item 
The horizontal map to $(\mathcal{CD}_n/4T)^*$ is given by killing all trivalent diagrams that are not chord diagrams.  
\item The downward vertical map to $(\mathcal{BCD}_n/YB)^*$ is defined similarly, and it is an isomorphism by \refT{BraidCohIsCDMod4T}.
\item The map $(\mathcal{CD}_n/4T)^* \twoheadrightarrow (\mathcal{BCD}_n/YB)^*$ kills all chord diagrams that are not braid diagrams.  Commutativity of the top square is then clear.
\item
The bottom row is induced by dualizing the inclusion of (isotopy classes of) pure braids into (isotopy classes of) long links.
\item
The upward vertical maps from the bottom row are the canonical maps (from finite type theory) from invariants to weight systems. Here braid invariants map to weight systems in $(\BCD(m))^*$ because of Artin's theorem \cite{Artin:Braids} that an isotopy of braids in the space of long links can be taken to be an isotopy in the subspace of braids.    Commutativity of the lower right square is now clear.  
\item
The lower-left downward map $\int: \Ho^{0,n}(\LD(m)) \to \mathcal{LV}_n / \mathcal{LV}_{n-1}$ is the composition 
 of the map $\Ho^{0,n}(\LDf(m)) \to \mathcal{LV}_n$ induced by the Kontsevich integral, followed by the projection  $\mathcal{LV}_n \to \mathcal{LV}_n / \mathcal{LV}_{n-1}$.  It is inverse to the canonical map, i.e.~the bottom 
left square commutes.  
\end{itemize}
To complete the proof, we apply results of Habegger and Masbaum.  They showed that the tree part (or perhaps more accurately, the ``forest part'') of the Kontsevich integral produces all finite-type concordance invariants \cite[Theorem 15.1]{HabMas-KontsevichMilnor}.  
The map they studied is closely related to the dual of the left-hand vertical composition.  
The only difference is that because of condition (2) in Definition \ref{D:LDfDef}, our subspace $\LDf(m)$ excludes some forests where a component tree has all its leaves on the same segment.
Nonetheless, one can show (using the shuffle product and cocyclicity) that $\Ho^{0,*}(\LDf(m))$ maps onto the quotient of the space of finite-type concordance invariants by the ideal generated by nontrivial \emph{knot} invariants.  This suffices for our purposes, since such knot invariants vanish in $\mathcal{BV}_n(m)$.
Habegger and Masbaum's work then implies that the composition $\Ho^{0,n}(\LDf(m)) \to \mathcal{BV}_n(m)/\mathcal{BV}_{n-1}(m)$ is surjective.  Indeed, in the proof of Theorem 16.1 in \cite{HabMas-KontsevichMilnor}, they show that the dual map from the corresponding filtration quotient of braids to the space of order-$n$ tree (i.e.~forest) diagrams is injective.  The key point there is that the dual of $\mathcal{BV}_n(m)/\mathcal{BV}_{n-1}(m)$ can be identified with a quotient in a filtration by Milnor concordance invariants, and again, these are precisely the invariants coming from forest diagrams.  
%
\end{proof}

\begin{rems} \ 
\begin{enumerate}
\item
The composition $\Ho^{0,n}(\LDf(m)) \to \mathcal{BV}_n(m)/\mathcal{BV}_{n-1}(m)$ is not injective.  In fact, Habegger and Masbaum showed that $\Ho^{0,n}(\LDf(m)) \to \LV_n(m)/\LV_{n-1}(m)$ maps isomorphically onto the corresponding filtration quotient of finite type \emph{concordance} invariants of string links.  So for example, for $n=3$, $m=2$, its image contains the Milnor invariant $\mu_{1122}$, which detects the Whitehead link, but which must map to 0 in $\mathcal{BV}_3(2)/\mathcal{BV}_{2}(2) = 0$.
For general $m$ and $n$, the dimension of this kernel can be found by comparing Willerton's work on Vassiliev invariants of pure braids \cite[Chapter 4]{Willerton:PhD} to Orr's result on Milnor concordance invariants \cite[Theorem 15]{Orr-Invar}.
\item
The surjectivity above is reminiscent of the fact that concordance coincides with isotopy for braids; see \cite[Section 3]{LeDimet-1988} or \cite[Section 5]{Kirk-Livingston-Wang}.  However, we need the surjectivity at the level of filtration quotients, rather than at the level of all braid invariants.
\end{enumerate}
\end{rems}

We next characterize the map $\phi$ on certain cocycles in $\LDf(m)\subset \LD(m)$.  Recall that a link invariant is a \emph{homotopy invariant} if its value is unaffected by changing a crossing involving any single strand.   Milnor's link invariants are well defined integers for long links, and a certain subset of them are homotopy invariants \cite{Milnor-Mu, HabLin-Classif}.  They are also finite type invariants, so there are cocycles in $\LD(m)$ which map to them under the Bott--Taubes integral.  The cocycles corresponding to Milnor's homotopy invariants lie in the subcomplex $\HD(m) \subset \LDf(m)$ of forests where the leaves of each tree lie on distinct segments \cite{KMV:FTHoLinks, KV:MilnorHoLinks}.\footnote{Furthermore, finite type invariants separate long links up to link homotopy \cite{BN:HoLink} and Bott--Taubes integrals yield such invariants \cite{KMV:FTHoLinks}, so the image of the Bott--Taubes integration map $\Ho^{0,*}(\HD(m)) \to \Ho^{0}(\Link(\coprod_m \R, \R^3))$ is large enough to distinguish elements of $\Ho_{0}(\Link(\coprod_m \R, \R^3))$.}

\begin{prop}
\label{P:HtpyDiagramsAreGood}
Let $\gamma \in \HD(m)$ be a cocycle associated to a Milnor link-homotopy invariant.
Then all of the diagrams appearing in $\gamma$ are good diagrams, and the diagrams in $\phi(\gamma)$ are in bijection with those in $\gamma$.
\end{prop}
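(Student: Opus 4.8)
The plan is to check that each of the three operations defining $\phi$—(Q1), (Collapse), and the (Q2)/(Shuffle) step—acts trivially on the diagrams that make up $\gamma$, using the two defining features of $\HD(m)$: every such diagram is a forest, and within each tree the segment vertices lie on pairwise distinct segments. The forest condition makes (Q1) vacuous at once, since a forest contains no closed path of edges; hence no term of $\gamma$ is killed by (Q1).

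Next I would dispose of the (Collapse) step. Because each tree meets a given segment at most once, no segment carries two segment vertices of the same component, so there are no arcs joining segment vertices on a common component, and the (Collapse) operation is the identity on each component. In particular every $k_i \in \{0,1\}$, so the coefficient $1/(k_1!\cdots k_m!)$ equals $1$; thus $\phi$ sends each connected component $c$ to the identical diagram $c \in \D(m)$, now regarded as occupying a single time slice, with no fractional coefficient and no merging of vertices.

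The heart of the argument, and where I expect the main obstacle, is the (Q2)/(Shuffle) step: I must show that no forest in $\gamma$ is order-violating and that the partial order its segment data induce on the components is in fact a total order, so that $\phi(\Gamma)$ is a single braid diagram rather than a nontrivial shuffle sum. For this I would invoke the explicit description of the Milnor link-homotopy cocycles from \cite{KMV:FTHoLinks, KV:MilnorHoLinks}, whose forests arise from a single connected tree by iterating the STU resolution; each resolution splits a component by planting the two resulting feet on a common segment, so the components inherit a consistent relative stacking. The key lemma to prove is that this stacking makes every pair of components comparable with no directed cycle $c_1 \to \cdots \to c_k \to c_1$; ruling out such a cycle (equivalently, producing a global linear order of the components compatible with every shared segment) is the delicate point, and I would argue it by tracking how the feet planted by successive resolutions propagate along the strands.

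Finally, granting that each forest $\Gamma$ in $\gamma$ is good and has totally ordered components, I would read off the bijection. Since (Collapse) reproduces each component verbatim and the time-slice order is forced by the segment data, two distinct forests of $\gamma$ produce distinct stackings, hence distinct braid diagrams, while each braid diagram of $\phi(\gamma)$ has a unique source forest; as leaf shuffles across components are already recorded as separate forests in $\gamma$ (Claim \ref{C:AllSegmentVShuffles}), nothing is merged or created. This yields the term-for-term correspondence asserted in the proposition.
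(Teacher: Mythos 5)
Your outline follows the same strategy as the paper: reduce to the explicit description of the Milnor link-homotopy cocycles from \cite{KV:MilnorHoLinks} (a linear combination of trivalent trees in $\HD(m)$ plus the diagrams obtained by iterating STU resolutions) and check that $\phi$ acts as the identity on each term. Your observations that (Q1) is vacuous on forests and that (Collapse) is the identity with coefficient $1$ because each tree meets each segment at most once are correct, and they are exactly what underlies the term-for-term correspondence in the paper.

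The gap is at the step you yourself flag as ``the delicate point.'' You state the key lemma --- no directed cycle among the components and, moreover, comparability of \emph{every} pair, so that the shuffle sum has a single term --- but the proposed argument (``tracking how the feet planted by successive resolutions propagate along the strands'') is not carried out, and it is not clear how to make it work pairwise: after several resolutions two components need not share a segment at all, so their comparability is not a local statement about any single resolution. The paper closes this gap by isolating an inductive invariant, \emph{arc-connectedness} (every pair of vertices is joined by a path of edges and arcs), and running a downward induction on the number of free vertices: a trivalent tree in $\HD(m)$ is good and arc-connected, and in each STU relation with the tree as $S$, the diagrams $T$ and $U$ are again good and arc-connected \emph{because the leaves lie on distinct segments}, each of their components again being a tree with leaves on distinct segments. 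Goodness then guarantees that no term is killed by (Q1) or (Q2), and arc-connectedness is what forces the partial order on components to be total, so each diagram contributes exactly one shuffle. If you supply this invariant (or an equivalent one), your argument is complete; without it, the central claim of the proposition is asserted rather than proved.
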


\begin{proof}
Call a diagram in $\LD(m)$ \emph{arc-connected} if every pair of vertices can be joined by a path of edges and arcs.  
It was shown in \cite{KV:MilnorHoLinks} that the cocycle corresponding to a Milnor homotopy invariant via the Bott--Taubes configuration space integral map is a linear combination of trivalent trees in $\HD(m)$, plus some linear combination of diagrams in $\HD(m)$ with fewer free vertices coming from applying all possible STU relations.  
A trivalent tree is clearly a good, arc-connected diagram.  In each STU relation, with the tree as S, the diagrams T and U are not only arc-connected but also good, using the fact that the leaves of the tree lie on distinct segments.  Moreover, each component of T and U is a tree with leaves on distinct segments.  
Proceeding by downward induction on the number of free vertices in a diagram appearing in $\gamma$, the same argument shows that all the diagrams in $\gamma$ are good and arc-connected.  The goodness implies that none are sent to zero by $\phi$.  The arc-connectedness implies that the partial order on the components induced by the orders of the segment vertices is a total order, so there is only one shuffle of the components respecting this order.
\end{proof}

\begin{rem}
This result of course generalizes to linear combinations of Milnor invariants, but it does not quite hold for cocycles associated to arbitrary link-homotopy invariants.  Nonetheless, it fails in a relatively mild way.  In fact, any link-homotopy invariant is a polynomial in Milnor invariants.  The Bott--Taubes configuration space integral map from $\LD(m)$ takes the shuffle product to the product of invariants.  Thus if $\gamma \in \LD(m)$ corresponds to a link-homotopy invariant, each diagram in $\gamma$ is a shuffle product of good diagrams.  The map $\phi$ simply kills all the shuffles which are order-violating diagrams.
\end{rem}

\subsection{Relating formality, Chen's integrals, and configuration space integrals}\label{S:Integrals}

With the map $\phi$ of diagram complexes in hand, we can now finish the proof of our main result, stated in the Introduction and recalled below for convenience.   
It remains only to check the compatibility of the two types of integrals from a diagram complex to the cohomology of an embedding space.  

\begin{thm}\label{T:MainDiagramThm}
Let $n\geq 3$.  Via the map $\phi$ of diagram complexes, Bott--Taubes integration on forest diagrams in $\LD(m)$ agrees with the composition of the formality integration map $I$ followed by Chen's iterated integrals on $\Ba(\D(m))$.  That is, we have the following commutative diagram:
\begin{equation}
\label{E:MainCommutativeSquareRep}
\xymatrix{
\Ho^{0,k}(\LD_\mathrm{f}(m)) \ar^-{\varint_{\mathrm{Bott-Taubes}}}[rr] \ar@{->>}_-\phi[d] & & \Ho^{k(n-2)}(\Emb(\coprod_m \R, \R^{n+1}) \ar^-{\iota^*}[d] \\
\Ho^{0,k}(\Ba(\D(m))) \ar^-{\varint_{\mathrm{Chen}} \circ \varint_{\mathrm{formality}}}_-{\cong}[rr] & & \Ho^{k(n-2)}(\Omega \Conf(m, \R^n))
}
\end{equation}
\end{thm}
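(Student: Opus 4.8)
The plan is to verify the commutativity of \eqref{E:MainCommutativeSquareRep} at the level of differential forms, after reducing to generators of the source. By \refP{PhiChainMapOnZ0} (and the identification of $\Ho^{0,k}(\LD_{\mathrm{f}}(m))$ with $\ZZ^{0,k}(\LD_{\mathrm{f}}(m))$), these generators may be taken to be trivalent forest cocycles $\gamma \in \ZZ^{0,k}(\LD_{\mathrm{f}}(m))$. For such a $\gamma$ I must show that the two forms on $\Omega\Conf(m,\R^n)$, namely $\iota^*\varint_{\mathrm{Bott-Taubes}}(\gamma)$ and $\varint_{\mathrm{Chen}}\circ\varint_{\mathrm{formality}}(\phi(\gamma))$, represent the same cohomology class. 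Since $\phi$ is already in hand and the bottom map is an isomorphism by \refT{DiagramsBraidCohomology}, the only remaining content is this compatibility of the two integrals.

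First I would rewrite both sides as fiber integrals over a single configuration space bundle. Restricting the Bott--Taubes bundle of \eqref{E:BT-Square} along $\iota$ yields a bundle over $\Omega\Conf(m,\R^n)$ whose fiber parametrizes the segment vertices (each carrying a time parameter $t\in[0,1]$ along its strand) together with the free vertices (each a point of the ambient $\R^{n+1}=\R\times\R^n$); the integrand is $\alpha_\Gamma=\bigwedge\alpha_{ij}$ with each $\alpha_{ij}$ pulled back from $S^n$, as in \eqref{E:BT-Integral}. Because $\iota$ sends a braid to the link $t\mapsto(t,\tilde x_i(t))$, the first ($x_1$) coordinate of every vertex on a strand is exactly its time parameter; this is the structural input recorded by the heuristic of \refS{Heuristics}.

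The conceptual heart of the argument is to factor this fiber integration into two stages along the $x_1$(time)-direction. In the first stage one pushes forward along the relative-time directions \emph{within} each connected component of $\Gamma$, collapsing the time-spread of each component onto a single time slice $\R^n$. This has two effects: each $S^n$-propagator $\alpha_{ij}$ is pushed forward, via the fibration $S^n\setminus\{\text{poles}\}\rightsquigarrow S^{n-1}$ forgetting the $x_1$-component, to the equatorial $S^{n-1}$-propagator of the formality integral \eqref{E:FormalityIntegral}; and the arcs between segment vertices on a common strand of a component are contracted, which is exactly the (Collapse) operation defining $\phi$, with the factor $1/(k_1!\cdots k_m!)$ accounting for the orderings of the merged segment vertices. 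What remains is precisely the formality integrand on each slice, i.e.\ $\varint_{\mathrm{formality}}(\phi(\gamma))$, and the second stage—integration over the ordered positions $0\le t_1\le\cdots\le t_p\le 1$ of the component slices over $\Delta^p$—is Chen's iterated integral \eqref{E:ChenForm}. The shuffles arising from the possible orderings of components match those produced by $\phi$.

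The main obstacle is that the first stage is not exact: the square time-domain $[0,1]^2$ of a pair of vertices does not factor through its diagonal, so the reduction $S^n\rightsquigarrow S^{n-1}$ and the localization to time slices hold only up to boundary terms handled by Stokes' theorem, hence only up to exact forms. To pin down the resulting class I would invoke \refT{BraidCohIsCDMod4T}: the target $\Ho^{k(n-2)}(\Omega\Conf(m,\R^n))$ is detected by its weight system, its isomorphic image in $(\BCD_k(m)/\mathrm{YB})^*$. It therefore suffices to check that the two paths around \eqref{E:MainCommutativeSquareRep} induce the same weight system, which is exactly the commutativity of the top square of \eqref{E:ConcordanceDiagram}: the weight system of $\iota^*\varint_{\mathrm{Bott-Taubes}}(\gamma)$ is the image of $\gamma$'s chord part under $(\mathcal{CD}_k(m)/4T)^*\to(\BCD_k(m)/\mathrm{YB})^*$, while that of $\varint_{\mathrm{Chen}}\circ\varint_{\mathrm{formality}}(\phi(\gamma))$ is computed through $\overline I$ and equals the same restriction. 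Matching the forms up to exact terms in the first stage and identifying the surviving class by its weight system thus completes the proof of commutativity, and together with the surjectivity of \refT{DiagramsSurjOnCohom} it furnishes the full square \eqref{E:MainCommutativeSquareRep}.
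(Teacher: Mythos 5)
Your overall strategy --- reduce to trivalent forest cocycles, realize both sides as fiber integrals over one bundle, localize the Bott--Taubes integrand to time slices so that the $S^n$-propagators degenerate to equatorial $S^{n-1}$-propagators and the arcs collapse as in the definition of $\phi$ --- is the right geometric picture, and it is essentially the picture the paper uses. But as written the proof has a genuine gap at the point where you concede that the ``first stage'' is inexact and then try to repair it by weight systems. You assert that the discrepancy consists of ``boundary terms handled by Stokes' theorem, hence only up to exact forms''; in configuration space integrals the Stokes boundary terms are themselves fiber integrals over the codimension-one faces of the compactified fiber, and showing that these vanish (or assemble into an exact form on the base) is precisely the hard analytic content. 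The paper avoids this by a different mechanism: it first proves that $\varint_{\mathrm{Bott-Taubes}}$ is independent of the choice of propagator form on $S^n$ (this is where the forest hypothesis and condition (2) of \refD{LDfDef} enter, and where the case of ambient dimension $4$ needs special care), and then takes a sequence of propagators supported in shrinking neighborhoods of the equator, so that the integral --- constant in $\ell$ --- is computed as its limit, which is exactly $\varint_{\mathrm{Chen}}\circ\varint_{\mathrm{formality}}(\phi(\Gamma))$ on the nose, not merely up to unidentified exact terms. Your proposal never addresses this independence, nor the order-violating (bad) forest diagrams in $\gamma$, which $\phi$ kills and whose Bott--Taubes contributions must separately be shown to vanish on braids (the paper does this by a codimension argument near the locus where the offending segment vertices have equal times).

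The fallback via \refT{BraidCohIsCDMod4T} does not close the gap, because it presupposes the very fact to be proven. To identify the class $\iota^*\varint_{\mathrm{Bott-Taubes}}(\gamma)$ by its image in $(\BCD_k(m)/\mathrm{YB})^*$ you must \emph{compute} that image, i.e.\ pair the form with explicit cycles generating $\Ho_{k(n-2)}(\Omega\Conf(m,\R^n))$ and show the answer is the braid-chord part of $\gamma$. The top square of \eqref{E:ConcordanceDiagram} cannot supply this: it is a purely combinatorial statement about $\phi$ and the projections to (braid) chord diagrams, and the integration map appearing in that diagram is the Kontsevich integral in the classical case $n=2$, not the Bott--Taubes integral in $\R^{n+1}$ for $n\geq 3$. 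Asserting that ``the weight system of $\iota^*\varint_{\mathrm{Bott-Taubes}}(\gamma)$ is the image of $\gamma$'s chord part'' is, modulo \refT{BraidCohIsCDMod4T}, a restatement of the commutativity of \eqref{E:MainCommutativeSquareRep} itself. Either carry out the localization exactly (via independence of the propagator and a limiting argument, as in the paper), or supply an independent computation of the pairing of the Bott--Taubes classes with the homology of the space of braids; without one of these the argument is circular.
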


By \refT{BraidCohIsCDMod4T}, the cohomology of the space of braids is nontrivial only in degrees that are multiples of $(n-2)$, so for various $k$ the classes above capture all the cohomology.

\begin{proof}
The surjectivity of the map $\phi$ was established in \refT{DiagramsSurjOnCohom}, while the bottom row is an isomorphism by \refT{DiagramsBraidCohomology} and \refT{Chen}.  So it remains to prove that the square commutes.

We first set some notation.  
Let $\Gamma \in \LD^{0,n}$ be a link diagram of defect 0 and order $n$.  
Let $\Conf_{\mathcal{L}}$ be the total space of the bundle associated to the link diagram $\Gamma$, as in  \eqref{E:BT-Square}, but with the numbers of vertices suppressed from the notation.  Let $F_{\mathcal{L}}$ denote its fiber.
Let $\Conf_B$ denote the total space of the bundle over which the composite of integration maps $\int_{\mathrm{Chen}} \circ \int_{\mathrm{formality}}$ is performed, and let $F_B$ denote its fiber.
A point in $F_B$ consists of some number of configuration points in $\R^n$ and a point in a simplex $\Delta^p$.  
A point in $\Conf_B$ consists of this data plus a braid.
Let $\alpha_{\Gamma}$ be the form on $\Conf_{\mathcal{L}}$ associated to the diagram $\Gamma$.
Let $\alpha_{\phi(\Gamma)}$ be the form on $\Conf_B$ associated to the braid diagram $\phi(\Gamma)$.  
Establishing commutativity of the square then amounts to showing that 
$\iota^*\int_{F_{\mathcal{L}}} \alpha_\Gamma = \int_{F_B} \alpha_{\phi(\Gamma)}$.
(The map  $\iota^*$ was defined in \refS{BraidsAsLinks}.)

The main idea is to consider forms on $S^n$ supported near a time slice $\R^n \subset \R^{n+1}$ perpendicular to the flow of the braid, and to use such forms to define $\alpha_\Gamma$ in the Bott--Taubes integral.  We first address the independence of $\int_{F_{\mathcal{L}}} \alpha_\Gamma$ from this choice of form.
In \cite[Theorem 4.36]{KMV:FTHoLinks}, this independence was proven for $n+1\geq 5$.  To cover the case of ambient dimension 4, we must recall that proof.  It is clear that different volume forms on $S^n$ yield cohomologous forms for $\alpha_\Gamma$, so suppose $\alpha_1$ and $\alpha_2$ are two cohomologous forms to be fiberwise integrated.  Let $\beta$ be a form with $d \beta = \alpha_1 - \alpha_2$.  By Stokes' Theorem, $\int_{F_{\mathcal{L}}} \alpha_1$ is cohomologous to $\int_{F_{\mathcal{L}}} \alpha_2$ precisely if $\int_{\partial F_{\mathcal{L}}} \beta=0$.  This latter condition is ensured by partitioning the contributions from various faces constituting $\partial F_{\mathcal{L}}$ and using cancellation, symmetry, and degeneracy arguments.  
The cancellation, symmetry, and some of the degeneracy arguments are independent of the ambient dimension.  
For the degeneracy arguments, since $\beta$ is a primitive of $\alpha_1 -\alpha_2$, one must show that the image of the appropriate face in $\Conf_{\mathcal{L}}$ in $\prod S^n$ has codimension at least 2.  
The only faces for which the argument depends on $n$ are those involving a collision of vertices in a subgraph $\Gamma' \subset \Gamma$ where all free vertices have valence at least 3 and all segment vertices have valence at least 1.  
But any diagram $\Gamma \in \LDf^{0,*}(m)$ consists of trivalent trees, so such a subgraph $\Gamma'$ would have to be a union of components of $\Gamma$.  A collision of all the vertices of $\Gamma'$ is then impossible, by condition (2) in \refD{LDfDef} of the subspace $\LDf^{0,*}(m)\subset \LD^{0,*}(m)$.

Having noted that the Bott--Taubes integral is independent of the choice of volume form on $S^n$, we take a sequence of antipodally symmetric forms $(\omega_\ell)_{\ell=1}^\infty$ on $S^n$ with support in increasingly small $\delta$-neighborhoods of $S^{n-1}$, the equator in the hyperplane perpendicular to the flow of the braid.  
Let $\alpha_{\Gamma, \ell}$ be the form on $\Conf_{\mathcal{L}}$ associated to the diagram $\Gamma$ and the form $\omega_\ell$ on $S^n$.  
By the independence of the integral from the choice of form and thus $\ell$, we may write $\int_{F_{\mathcal{L}}} \alpha_\Gamma$ to denote  $\int_{F_{\mathcal{L}}} \alpha_{\Gamma, \ell}$ for any $\ell$ without ambiguity.
We let $\omega$ be the
form on the equator $S^{n-1}$ such that $\int_{S^n} \omega_\ell \to \int_{S^{n-1}} \omega$, and we use this form to define $\alpha_{\phi(\Gamma)}$.  

We will now check commutativity by dividing the defect zero forest diagrams in $\LD(m)$ into good diagrams and bad diagrams.

\textbf{Good diagrams}:  For a good diagram $\Gamma$, the two integrals $\iota^*\int_{F_{\mathcal{L}}} \alpha_\Gamma$ and $\int_{F_B} \alpha_{\phi(\Gamma)}$ differ as follows:
\begin{itemize}
\item[(a)] The fiber $F_{\mathcal{L}}$ has configurations where vertices on the same component do not lie in a time-slice, i.e.~a hyperplane $\{t\} \x \R^{n}$, whereas in the fiber $F_B$, vertices on the same component are always cohyperplanar.
\item[(b)] 
The form $\alpha_\Gamma$ is pulled back from a product of $S^n$'s, whereas the form $\alpha_{\phi(\Gamma)}$ is pulled back from a product of $S^{n-1}$'s.
\end{itemize}
Using the sequence of forms $\omega_\ell$ on $S^n$, we see that the contributions to $\iota^*\int_{F_{\mathcal{L}}} \alpha_\Gamma$ from non-cohyperplanar configurations are arbitrarily small, and the remaining contributions are essentially measured by a form on $S^{n-1}$.  So while the sequence $\omega_\ell$ does not strictly speaking converge to a form, we claim that the resulting sequence of integrals does converge:
\[
\xymatrix{
\iota^* \displaystyle \int_{F_{\mathcal{L}}} \alpha_{\Gamma, \ell} \ar^-{\ell \to \infty}[rr] &&
\displaystyle \int_{F_B} \alpha_{\phi(\Gamma)}
}
\]  
The independence of the left-hand integral from the choice of form $\omega_\ell$ then implies that these integrals are constant as a function of $\ell$, establishing the commutativity of the diagram in this case.  
Modulo contributions from configurations with points outside of $I \x \R^n$, the convergence is not difficult to see. The point is that $F_B$ embeds into $F_{\mathcal{L}}$ as the subspace where configurations on a single component are cohyperplanar; this embedding comes from a map $\Delta^p \to \Conf((i_1,\dots, i_m), \coprod_1^m \R)$ defined via the appropriate diagonal maps.\footnote{Note that we need the shuffles in the definition of $\phi$ to attain all the possible configurations in $\Conf_{\mathcal{L}}$ with each component lying in a hyperplane.}

We finally address contributions from configurations with some points outside of $I \x \R^n$.  First, we can extend a loop $I \to \Conf(m, \R^n)$ to a map $\R \to \Conf(m,\R^n)$ that is constant outside $I$.  Then the Chen integral agrees with the integral over $p$-tuples $t_1 \leq ... \leq t_p$ where the $t_i$ are not constrained to lie in $[0,1]$; indeed, for $p$-tuples oustide of $\Delta^p$, the evaluation map factors through a positive-codimension subspace.  
Finally, while we technically need the link strands to approach infinity in different directions to make sense of the configuration space integrals, we take a path parametrized by say $s\in [a,b]$ from such an embedding of $\coprod_m [1, \infty)$ to the one with parallel strands, i.e.~given by the $m$ maps $t \mapsto (t, x_i)$.  We enlarge the space of long links to include all the behaviors at infinity parametrized by $s \in[a,b)$.  On this space, the configuration space integrals can be defined.
The integral at time $b$ is well defined only for cohyperplanar configurations, in which case we get the Chen integral, but this is the limit as $s \to b$ of the configuration space integrals.  This completes the check of commutativity for good diagrams.

(The reader may consider the above argument in the example of the cohomology class corresponding to a pairwise linking number, i.e.~a diagram $\Gamma_{ij}$ with a single chord.  In its configuration space integral, one integrates an $n$-form over $(t_1,t_2)\in \R^2$.  By taking only contributions near the equator, the integrals approach the integral of an $(n-1)$-form over the diagonal subspace $(t,t) \in \R^2$.)

\textbf{Bad diagrams}:  
Since $\LDf(m)$ consists of forests, a bad diagram in $\LDf(m)$ must be order-violating.  
For these bad diagrams, it suffices to show the vanishing of $\iota^*\int_{F_{\mathcal{L}}} \alpha_\Gamma$.  
Let $\Gamma$ be an order-violating diagram with components $(c_1 \to \cdots \to c_j \to c_1)$ that cannot be ordered.  Recall that every arrow corresponds to an arc between a pair of adjacent segment vertices.
Let $\mathfrak{S}$ be the subspace of $\Conf_B$ where each such pair of segment vertices has collided, meaning that in $(t_1 \leq ... \leq t_p) \in \Delta^p$, some of the $t_i$  are equal.  
Note again that $\Conf_B$ embeds into $\Conf_{\mathcal{L}}$, using appropriate diagonals to map $\Delta^p \to \Conf((i_1, \dots, i_m), \coprod_1^m \R)$.
Considering the same sequence of forms $\omega_\ell$ on $S^{n}$ with support approaching the equator, we obtain zero contribution to $\int_{F_{\mathcal{L}}} \alpha_\Gamma$ outside of any neighborhood of the image of $\mathfrak{S}$ in $\Conf_{\mathcal{L}}$.  But $\mathfrak{S}$ has positive codimension in each fiber of $\Conf_B$ and hence in each fiber of $\Conf_{\mathcal{L}}$, so we conclude that $\iota^*\int_{F_{\mathcal{L}}} \alpha_\Gamma=0$.  
%
\end{proof}

\begin{rem}
We can show that $\iota^*\int_{F_{\mathcal{L}}} \alpha_\Gamma=0$ for any diagram $\Gamma$ that is not a forest.  Indeed, we can reason as above that this form is approximated arbitrarily well by a form that is an integral over $\Conf_B$.   If the  Euler characteristic of a component of $\Gamma$ is $\leq 0$, then the latter form has degree strictly lower than the degree of the former, showing that it must be zero.
However, it is not immediately clear whether the configuration space integrals for diagrams in $\LD$ of defect $>0$ vanish for braids.
\end{rem}

\begin{cor}
\label{C:LinksSurjectBraids}
The inclusion of the space of braids into the space of long links in $\R^{n+1}$ induces a surjection in cohomology 
with real coefficients $\Ho^*(\mathcal{L}_m^{n+1}) \twoheadrightarrow \Ho^*(\Omega \Conf(m, \R^n))$ for all $n\geq 3$.
\end{cor}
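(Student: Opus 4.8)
The plan is to obtain the Corollary as an essentially formal consequence of the commutative square of \refT{MainDiagramThm}, via a diagram chase; all of the genuine work has already gone into establishing that square, so the argument here is short.

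First I would recall the square \eqref{E:MainCommutativeSquareRep} and the properties of its arrows that were proved earlier: for each $k$, its bottom horizontal map $\varint_{\mathrm{Chen}}\circ\varint_{\mathrm{formality}}$ is an isomorphism (by \refT{DiagramsBraidCohomology} together with \refT{Chen}), its left vertical map $\phi$ is surjective (by \refT{DiagramsSurjOnCohom}), and its right vertical map is precisely the restriction $\iota^*$ induced by the inclusion $\iota$ of \refS{BraidsAsLinks} of the space of braids into the space of long links.

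Next I would run the diagram chase. Given a class $c\in\Ho^{k(n-2)}(\Omega\Conf(m,\R^n))$, I would pull it back through the bottom isomorphism to a class $b\in\Ho^{0,k}(\Ba(\D(m)))$, then use surjectivity of $\phi$ to choose $a\in\Ho^{0,k}(\LDf(m))$ with $\phi(a)=b$. Commutativity of the square then yields
\[
\iota^*\bigl(\varint_{\mathrm{Bott-Taubes}}(a)\bigr)
=\bigl(\varint_{\mathrm{Chen}}\circ\varint_{\mathrm{formality}}\bigr)(\phi(a))
=\bigl(\varint_{\mathrm{Chen}}\circ\varint_{\mathrm{formality}}\bigr)(b)=c,
\]
so that $c$ lies in the image of $\iota^*$. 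This shows that $\iota^*$ surjects onto $\Ho^{k(n-2)}(\Omega\Conf(m,\R^n))$ for every $k$.

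Finally I would assemble these degreewise statements into the full result. By \refT{BraidCohIsCDMod4T}, the cohomology $\Ho^*(\Omega\Conf(m,\R^n))$ is concentrated in degrees that are multiples of $n-2$, so the groups $\Ho^{k(n-2)}$ exhaust all of it; degreewise surjectivity therefore upgrades to surjectivity of $\iota^*$ on $\Ho^*$. I do not expect a serious obstacle: the only points requiring care are recognizing that the right-hand edge of the square genuinely is the geometric restriction $\iota^*$ (rather than some auxiliary algebraic map) and that no cohomology is hidden in degrees not of the form $k(n-2)$ — both of which are already supplied by \refT{MainDiagramThm} and \refT{BraidCohIsCDMod4T}, so the deduction is immediate.
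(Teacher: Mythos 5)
Your proposal is correct and is essentially the paper's own argument: the Corollary is deduced from the commutativity of the square \eqref{E:MainCommutativeSquareRep}, the isomorphism in its bottom row, and the surjectivity of the left vertical map $\phi$, with the degreewise statement covering everything because \refT{BraidCohIsCDMod4T} concentrates $\Ho^*(\Omega\Conf(m,\R^n))$ in degrees that are multiples of $n-2$. No discrepancies to report.
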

\begin{proof}
This follows from the square \eqref{E:MainCommutativeSquareRep}, in particular from the isomorphism in the bottom row, the surjectivity of the left vertical map, and the commutativity of the diagram.
\end{proof}

The above statement mirrors the situation for classical links and braids.  See Remark \ref{R:ClassicalSurj} below.

\bigskip
\bigskip
\bigskip


\section{The case of braids in $\R^3$}\label{S:R^3BraidsCohomology}


In the classical case $n=2$, our main theorem essentially holds with one modification: the cohomology of the bar complex no longer maps isomorphically onto the cohomology of $m$-strand pure braids.  
Indeed, we can see that for even $m=2$, the map cannot be an isomorphism.  In that case, $\Ho^*(\Ba(\D(2)))$ is an infinite direct sum (with the sum taken over different numbers of chords), while $\Ho^0(\Omega \Conf(2,\R^2)) \cong \Ho^0(\Omega S^1)$ is dual to an infinite direct sum (with the sum taken over different winding numbers), hence an infinite direct product.  
As explained below, this is in some sense the only obstacle to extending the full theorem for $n \geq 3$ to $n=2$.
Recall the definition of finite invariants of pure braids of type $k$, as in \refS{IntroChenFormality}.

\begin{thm}
\label{T:Classical}
Let $n=2$.  For an appropriate choice of form on $S^2$, there is a commutative square   
\[
\xymatrix{
\Ho^{0,k}(\LD_\mathrm{f}(m)) \ar^-{\varint_{\mathrm{Bott-Taubes}}}[rr] \ar@{->>}_-\phi[d] & & \Ho^{0}(\Emb(\coprod_m \R, \R^{3}) \ar^-{\iota^*}[d] \\
\Ho^{0,k}(\Ba(\D(m))) \ar^-{\varint_{\mathrm{Chen}} \circ \varint_{\mathrm{formality}}}[rr] & & \Ho^{0}(\Omega \Conf(m, \R^2))
}
\]
where the bottom horizontal arrow maps isomorphically onto the filtration quotient $V_k(P_m)/V_{k-1}(P_m)$ of finite type invariants of pure braids of type $k$.
\end{thm}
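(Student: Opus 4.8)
The plan is to retrace the proof of \refT{MainDiagramThm}, isolating the exactly two places where $n\geq 3$ entered and supplying their $n=2$ replacements. Everything dimension-insensitive carries over: the bar formality zig-zag \eqref{E:BarZigzag} is valid for $n\geq 2$ by \refT{BraidsFormality}, so $\varint_{\mathrm{formality}}=B(I)$ remains a quasi-isomorphism preserving order; the Koszulness argument behind \refP{NoCohomAboveVanishLine} and \refT{BraidCohIsCDMod4T} uses only the quadratic relations \eqref{E:CohomCond2}, \eqref{E:CohomCond4}, and \eqref{E:CohomCond5}, which persist at $n=2$, so that (with order now counting chords rather than recording a cohomological degree) one still obtains $\Ho^{0,k}(\Ba(\D(m)))\cong(\mathcal{BCD}_k(m)/\mathrm{YB})^*$; and the surjectivity of the left-hand map $\phi$ from \refT{DiagramsSurjOnCohom} is proved through the classical diagram \eqref{E:ConcordanceDiagram}, whose bottom row already lives in $\R^3$, hence applies verbatim. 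The top map $\varint_{\mathrm{Bott-Taubes}}$ and the restriction $\iota^*$ are unchanged. Thus only the bottom map and the commutativity need a separate argument.

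For the bottom map, the issue is that Chen's theorem \refT{Chen} assumes simple connectivity, which now fails since $\pi_1(\Conf(m,\R^2))=P_m$. I would argue in two steps. First, the target: since $\Conf(m,\R^2)$ is a $K(P_m,1)$, every component of $\Omega\Conf(m,\R^2)$ is contractible, so $\Ho^0(\Omega\Conf(m,\R^2))=\Map(P_m,\R)$ and $\Ho^{>0}=0$; the order-$k$ specialization of the classical diagram \eqref{E:ConcordanceDiagram} then identifies $\Ho^{0,k}(\Ba(\D(m)))\cong(\mathcal{BCD}_k(m)/\mathrm{YB})^*\cong\mathcal{BV}_k(m)/\mathcal{BV}_{k-1}(m)=V_k(P_m)/V_{k-1}(P_m)$. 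Second, the realization by integration: Chen's $\pi_1$-de Rham theorem (Chen \cite{Chen:Itr-Integrals, Chen:Path-Integrals}, Hain \cite{Hain:Integrals}, and for braids Kohno \cite{Kohno:Vassiliev, Kohno:VassilievBraids}) states that iterated integrals of closed $1$-forms of length $\leq k$ compute $(\R[P_m]/J^{k+1})^*=V_k(P_m)$, with the length filtration matching the $J$-adic filtration. Since $\varint_{\mathrm{Chen}}\circ\varint_{\mathrm{formality}}$ preserves order, it carries the order-$k$ part into $V_k$ and induces on associated graded the asserted isomorphism $\Ho^{0,k}(\Ba(\D(m)))\stackrel{\cong}{\longrightarrow}V_k(P_m)/V_{k-1}(P_m)$. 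This also pinpoints why the bottom arrow is not an isomorphism onto all of $\Ho^0$: its source is a direct sum over orders, whereas $\Map(P_m,\R)$ is the corresponding inverse limit, as the $m=2$ discussion preceding the theorem shows.

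For commutativity, I would run the proof of \refT{MainDiagramThm} with $S^n$ replaced by $S^2$ and the equator taken to be $S^1\subset S^2$. The \emph{appropriate form} is a sequence of antipodally symmetric forms $\omega_\ell$ on $S^2$ supported in shrinking neighborhoods of this equator; with these the good/bad diagram dichotomy and the convergence of $\iota^*\int_{F_{\mathcal{L}}}\alpha_{\Gamma,\ell}$ to $\int_{F_B}\alpha_{\phi(\Gamma)}$ proceed word for word, since those cancellation and codimension arguments are dimension-insensitive on $\LDf^{0,*}(m)$. The single dimension-sensitive step — the faces where all vertices of a subgraph $\Gamma'$ collide, which is precisely the origin of the Bott--Taubes anomaly at ambient dimension $3$ — is disposed of exactly as in the case of ambient dimension $4$: since every diagram in $\LDf^{0,*}(m)$ is a disjoint union of trivalent trees, such a $\Gamma'$ is forced to be a union of whole components, and condition (2) of \refD{LDfDef} forbids their simultaneous collision.

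I expect the main obstacle to be the careful bookkeeping in the non-simply-connected Chen theory: confirming that the order grading on $\Ba(\D(m))$ is compatible, through the formality map $\varint_{\mathrm{formality}}$, with the type filtration on $V_k(P_m)$, so that the bottom arrow genuinely lands in $V_k$ and realizes the associated-graded isomorphism rather than some filtered perturbation of it. By contrast, the ambient-dimension-$3$ anomaly, which the statement flags via the phrase ``for an appropriate choice of form,'' is already subsumed by the condition-(2) argument of \refT{MainDiagramThm} and needs only to be observed to survive the tighter codimension counts at $n=2$; it should therefore present no difficulty beyond what is recorded there.
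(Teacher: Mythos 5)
Your overall strategy matches the paper's: rerun the proof of \refT{MainDiagramThm} at $n=2$, handle the loss of simple connectivity via Chen's $\pi_1$-de Rham theory (the paper cites \cite{Chen:Pi1} to identify the degree-zero cohomology of the filtered piece of the bar complex with $\Hom(J/J^{k+1},\R)=V_k(P_m)/\R$, essentially your second step), and dispose of the anomaly for the top map by observing that the problematic faces are whole-component collisions, which condition (2) of \refD{LDfDef} forbids. All of that agrees with the paper.

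The gap is in your commutativity argument, specifically in the claim that the cancellation and codimension arguments of \refT{MainDiagramThm} are ``dimension-insensitive on $\LDf^{0,*}(m)$'' and that the whole-component collision faces are \emph{the single} dimension-sensitive step. At $n=2$ there is a second, genuinely new failure: the degeneracy argument breaks for the boundary faces where a component of $\Gamma$ escapes to $\infty$. The paper's own example is the tripod on three strands, where $\alpha_1,\alpha_2$ are $6$-forms, the primitive $\beta$ is a $5$-form, and the face at infinity has dimension $5$, so $\int_{\partial F_{\mathcal L}}\beta$ has no reason to vanish there; hence the independence of $\int_{F_{\mathcal L}}\alpha_{\Gamma,\ell}$ from $\ell$ --- the very fact your limiting argument relies on --- is not automatic. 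The paper repairs this by assuming the link strands are affine-linear in the $xy$-plane near infinity and by requiring the form on $S^2$ to vanish in a neighborhood of the equatorial $S^1$ in the $xy$-plane (compatible with, but in addition to, concentration near the equator perpendicular to the braid flow). This extra constraint is what the phrase ``for an appropriate choice of form on $S^2$'' in the statement actually refers to; your reading of that phrase as merely naming the sequence $\omega_\ell$ concentrated near the perpendicular equator misses the point, and without the faces-at-infinity argument the commutativity of the square is not established.
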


\begin{rems}
\label{R:ClassicalSurj} \ 
\begin{enumerate}
\item
Although this theorem is slightly weaker than the version for $n\geq 3$, it is not much weaker.
First, since $\Conf(m,\R^2)$ is a $K(\pi,1)$ space \cite{FN:ConfFibration}, the only nontrivial cohomology of $\Omega \Conf(m,\R^2)$ is in degree zero.  Second, while the maps $\Ho^{0,k}(\Ba(\D(m))) \to \Ho^{0}(\Omega \Conf(m, \R^2))$ for various $k$ are not isomorphisms, they are strong enough to separate pure braids.  For example, \cite{Kohno:VassilievBraids} and \cite{MostovoyWillerton} contain proofs of the fact that finite type invariants separate pure braids, using powers of the augmentation ideal of the pure braid group.  
\item
The inclusion $j: \Emb(\coprod_m \R, \R^{3}) \to \Link(\coprod_m \R, \R^{3})$ from embedded links to link maps induces an injection $j^*$ on $\Ho^0$, from homotopy invariants to isotopy invariants of long links.  Since isotopy classes of pure braids surject onto homotopy classes of string links \cite{Goldsmith:1973}, the composition $\iota^* \circ j^*$ is injective, and we may view homotopy invariants as a subset of $\Ho^{0}(\Omega \Conf(m, \R^2))$.  The previous remark then implies that the image of (either) composition in the above square separates long links up to homotopy, as also guaranteed by \refP{HtpyDiagramsAreGood} and the fact that Bott--Taubes integrals restricted to $\HD(m) \subset \LDf(m)$ separate such homotopy classes \cite{KMV:FTHoLinks, BN:HoLink}.
\item
While either composition in the square above is not a surjection in cohomology, the analogue of \refC{LinksSurjectBraids} holds, that is, the map $\iota^*$ is still a surjection in this classical case.  In fact, Artin's theorem that braids isotopic as long links are isotopic as braids \cite{Artin:Braids} implies that the map $\iota_*$ is injective on $\Ho_0$ and thus $\iota^*$ is surjective on $\Ho^0$.  (It is trivially a surjection in higher degrees.)
\end{enumerate}
\end{rems}

\begin{proof}[Proof of \refT{Classical}]
First note that the map $\phi$ and its surjectivity in cohomology are independent of $n$, and the map $\iota$ can be defined for any $n\geq 2$.  Next, we address the top horizontal map.  In general, for $n+1=3$, configuration space integrals require the addition of an anomaly term for arbitrary link invariants from $\LD(m)$.  However, this anomaly term is only required to cancel contributions from boundary faces 
where the colliding vertices correspond precisely to a connected component of a diagram $\Gamma$.  Condition (2) in \refD{LDfDef} rules out such a possibility for $\Gamma \in \LDf(m)$.  So the anomaly correction is not needed for these diagrams, and the top horizontal map is well defined.  

The formality integral $I: \D(m) \to \Ch_{dR}^*(\Conf(m,\R^2))$ is still a quasi-isomorphism for $n=2$ (though for showing formality in this case, one can embed the cochains directly into cohomology \cite{Arnold:Formality}).  As in \refT{BarFormal}, this passes to a quasi-isomorphism of bar complexes.  
(Computing the cohomology of the bar complex for $n=2$ then proceeds as for $n\geq 3$, yielding a similar result, but concentrated in total degree 0.)

For the Chen integral map, first recall from Remarks \ref{R:DefectZeroRems} that defect $0$ and order $k$ correspond to the main diagonal $\Ba^{-p,p}(\D(m))$ with $p \leq k$.
For a space that is not simply connected, such as $\Conf(m,\R^2)$, the Chen integral does not compute the whole loop space cohomology, but it does tell us about $\Ho^0$.  Let $\mathcal{F}^k\Ho^0(\Ba(\Ch_{dR}^*\Conf(m,\R^2)))$ denote the cohomology in total degree 0 represented by  $\Ba^{-p,p}(\Ch_{dR}^*\Conf(m,\R^2))$ with $p \leq k$.  
Let $J$ be the augmentation ideal in the group ring $\R[\pi_1(\Conf(m,\R^2))] = \R[P_m]$.
By \cite{Chen:Pi1}, we know that the Chen integral gives an isomorphism
\[
\mathcal{F}^k\Ho^0(\Ba(\Ch_{dR}^*\Conf(m,\R^2))) \stackrel{\cong}{\longrightarrow} \Hom(J / J^{p+1}, \R)
\]
The right side above is $V_k(P_m)/ \R$, the space of type $k$ finite type invariants modulo constant functions.

Finally, we address the commutativity of the diagram.  The proof proceeds as for the case $n\geq 3$, using a limit of forms with support in smaller and smaller neighborhoods of the equatorial $S^{1} \subset S^2$ perpendicular to the flow of the braid.  
The key question is again whether the Bott--Taubes integral depends on the choice of form on $S^2$.  As in the proof of \refT{MainDiagramThm}, the question reduces to whether $\int_{\partial F} \beta=0$, where $\beta$ is a primitive for the difference $\alpha_2 - \alpha_1$ of two forms to be integrated.  
Again, the cancellation and symmetry arguments are independent of $n$.  For the degeneracy arguments, one can show that the images of all the relevant faces have codimension 2, \emph{except} for the case where a component of $\Gamma$ collides with $\infty$.  
(For example, if we take the tripod on 3 strands, the $\alpha_i$ are 6-forms, and the face where all the vertices approach $\infty$ has dimension 5, so there is no reason for the integral of the 5-form $\beta$ to vanish.)  
However, if we assume that the behavior of our long links towards infinity is affine-linear in the $xy$-plane, we can guarantee the vanishing along such faces by choosing a form that vanishes in a neighborhood of the equator $S^1$ in the $xy$-plane (see \cite[Proposition 4.31]{KMV:FTHoLinks} for details). 
\end{proof}




\appendix
\section{Orientations of diagrams}

In mapping between link diagrams and braid diagrams, we need to convert between an orientation of ``odd type'' and an orientation of ``even type,'' since a loop of configurations of points in $\R^n$ is a long link in $\R^{n+1}$.  For braid diagrams $\Gamma$, an orientation of odd type consists of an ordering of the vertices and an orientation of every edge, whereas an orientation of even type consists of an ordering of the edges.  The same is true for link diagrams, except that an orientation of even type consists of an ordering of the edges and an ordering of the segment vertices.  
We used the following key fact in \refD{LinksToBraidsMap}.  

\begin{prop}
Suppose that $\Gamma$ is a connected link diagram whose component is a tree, i.e.~there are no closed loops of edges.
Then an orientation of odd type on $\Gamma$ corresponds canonically to an orientation of even type on $\Gamma$ and vice-versa.
\end{prop}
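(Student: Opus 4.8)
The plan is to reinterpret each type of orientation as a generator, up to sign, of a one-dimensional real \emph{determinant line}, and then to produce a canonical isomorphism between the two lines using the fact that a tree has trivial first homology. Concretely, to an edge $e$ associate the orientation line $D_e$, the rank-one real space with two generators corresponding to the two directions of $e$, a reversal negating the generator. Then an orientation of odd type is exactly a generator, modulo sign, of $\det(\R^{V(\Gamma)}) \otimes \bigotimes_{e \in E(\Gamma)} D_e$ (a vertex ordering times edge directions, with the stated sign relations), while an orientation of even type is a generator, modulo sign, of $\det(\R^{E(\Gamma)})$ for braid diagrams, or of $\det(\R^{E(\Gamma)}) \otimes \det(\R^{V_{\mathrm{seg}}(\Gamma)})$ for link diagrams. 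Since the segment vertices carry a canonical total order (by the order of the segments and the positions along each segment), $\det(\R^{V_{\mathrm{seg}}(\Gamma)})$ has a canonical generator; trivializing it reduces the claim in both cases to producing a canonical isomorphism $\det(\R^{V(\Gamma)}) \otimes \bigotimes_e D_e \cong \det(\R^{E(\Gamma)})$.

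The key step is the cellular chain complex of $\Gamma$, viewed as a one-dimensional CW complex: $0 \to \R^{E(\Gamma)} \xrightarrow{\partial} \R^{V(\Gamma)} \to 0$, where a choice of edge directions makes $\partial$ well-defined by $\partial e = \mathrm{head}(e) - \mathrm{tail}(e)$. Because $\Gamma$ is a connected tree, $\ker \partial = H_1(\Gamma) = 0$ and $\operatorname{coker}\partial = H_0(\Gamma) = \R$, so $\partial$ is an isomorphism onto the augmentation ideal $\widetilde{\R^{V}} := \ker(\R^{V} \to \R)$. This yields two canonical isomorphisms: first $\det(\R^{V(\Gamma)}) \cong \det(\widetilde{\R^{V}})$, coming from the short exact sequence $0 \to \widetilde{\R^V} \to \R^{V(\Gamma)} \to \R \to 0$ and the canonical generator $1$ of the quotient $\R$ (the augmentation sends every vertex to $1$); and second $\det(\R^{E(\Gamma)}) \otimes \bigotimes_e D_e \cong \det(\widetilde{\R^{V}})$, induced by $\det(\partial)$, where the direction-dependence of $\partial$ is exactly absorbed by the $D_e$ factors, since reversing one edge negates both the corresponding generator of $D_e$ and the map $\det(\partial)$. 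Combining these, and using that $D_e \otimes D_e$ is canonically trivial (its generator $d \otimes d$ is independent of the sign of $d$), gives the desired canonical isomorphism $\det(\R^{V(\Gamma)}) \otimes \bigotimes_e D_e \cong \det(\R^{E(\Gamma)})$.

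Finally, I would note that an isomorphism of these lines is precisely a sign-respecting bijection between their generators modulo sign, hence between orientation classes, and its invertibility gives the ``and vice versa.'' It remains to check that the stated relations match up — a vertex transposition and an edge reversal on the odd side correspond to edge-label transpositions on the even side — which is immediate from how permutations and sign changes act on determinant generators. I expect the main obstacle to be the sign bookkeeping and, relatedly, making the canonicity genuinely choice-free: one must verify that the isomorphism $\det(\partial)$ together with the $D_e$-twist is independent of the chosen edge directions, and that the canonical trivialization of $\det(\R^{V_{\mathrm{seg}}})$ is compatible with the segment-vertex part of the full vertex ordering appearing in the odd-type data. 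The tree hypothesis enters in exactly one essential place, namely in forcing $H_1(\Gamma) = 0$ so that $\partial$ is invertible; an equivalent elementary route, proceeding by induction on the number of edges by deleting a leaf and its edge and matching that edge with that vertex, would avoid determinant lines but require the same sign verification at each step.
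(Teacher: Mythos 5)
Your argument is correct, but it is not the route the paper's own explicit proof takes: you have fleshed out the determinant-line (``parity functor'') argument that the paper only cites, attributing it to Kuperberg--Thurston and to the appendix of D.~Thurston's thesis and summarizing it in one sentence (the key points being $\Ho_1(\Gamma)=0$ and a canonical generator of $\Ho_0(\Gamma)$). The paper's self-contained argument is instead a combinatorial bijection: choose a root, call a labeling \emph{flow-preserving} if everything is oriented and ordered outward from the root, match the $i$th edge with the vertex it points into (labeled $i+1$), and then observe that every labeling is $\pm$ a flow-preserving one and that the two correspondences compose to the identity. The trade-off is real: your chain-complex argument, $0 \to \R^{E(\Gamma)} \xrightarrow{\partial} \R^{V(\Gamma)} \to 0$ with $\partial$ an isomorphism onto the augmentation ideal, makes canonicity manifest (no root is chosen, and the relations --- vertex transpositions, edge reversals, edge-label transpositions --- are automatically respected because they are exactly how signs act on generators of determinant lines), but it defers concrete sign bookkeeping to the identifications $D_e \otimes D_e \cong \R$ and $\det(\R^{V}) \cong \det(\widetilde{\R^{V}})$, which you rightly flag. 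The paper's version hands you an explicit recipe for converting one labeling into the other, at the cost of a choice of root whose irrelevance is only implicit. One point worth making explicit in your write-up, since it is the only place the link-diagram case differs from the braid-diagram case: the even-type datum includes an ordering of $V_{\mathrm{seg}}(\Gamma)$, and your trivialization of $\det(\R^{V_{\mathrm{seg}}(\Gamma)})$ by the canonical segment-and-position order is exactly the normalization the paper uses when it assumes a representative odd-type labeling in which the segment vertices come first in that order; stating that compatibility closes the loop. Your suggested leaf-deletion induction is essentially a disguised form of the paper's root-based recipe, so either way you would end up reproving the same combinatorial fact.
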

\begin{proof}
This fact follows from the statements in Table 1 in Section 3.1 of the unpublished manuscript of Kuperberg and Thurston \cite{Kuperberg-Thurston}, where various types of orientations are viewed as parity functors.  
The proofs of those statements are essentially given in the Appendix of D.~Thurston's undergraduate thesis \cite{Thurs} by considering orientations on the vector spaces in an exact sequence.
The key points in this special case of a tree $\Gamma$ are that $\Ho_1(\Gamma)=0$, and one can choose an orientation for the one-dimensional space $\Ho_0(\Gamma)$ by viewing it as formal $\R$-multiples of the single component of $\Gamma$.  
We will however give a slightly more explicit correspondence.

Suppose $\Gamma$ is a tree.  Choose any vertex to be the root, and consider an odd type labeling of $\Gamma$ that is \emph{flow-preserving}.  By this we mean a labeling where the root is labeled 1, and all edges are oriented away from the root, and every edge points from lower to higher vertex labels.   This determines an even type orientation by taking the $i$th edge to be the one which is oriented towards vertex $i+1$.  

In the other direction, suppose again that a vertex of the tree $\Gamma$ is designated as the root, and consider an even type labeling of $\Gamma$ that is \emph{flow-preserving}.  By this we mean a labeling where in any path the edge-labels increase moving away from the root.  Then for the edge with the first label, label the endpoints 1,2, and orient this edge to point from 1 to 2.  (Either choice gives the same odd type orientation.)  This determines an odd type orientation by (inductively) labeling the (so far) unlabeled endpoint of edge $i$ with the label $i+1$.  

Of course not every labeling is flow-preserving, but by definition every labeling $\sigma$ is equivalent to either $\tau$ or $-\tau$ for some flow-preserving labeling $\tau$.  The fact that the composition of the two correspondences on flow-preserving labelings is the identity shows that the map is well defined and an isomorphism of parity functors.
\end{proof}

\bibliographystyle{amsplain}

\bibliography{Bibliography}
%

\end{document}